\newcommand{\N}{\ensuremath{\mathbb{N}}}
\newcommand{\FF}{\ensuremath{\mathbb{F}}}
\newcommand {\Fq}{\FF_q}
\newcommand{\out}[1]{}
\def\CC{\mathbb{C}}
\def\ZZ{\mathbb{Z}}
\def\PP{\mathbb{P}}
\def \mA {\mathcal{A}}
\def \mA {\mathcal{A}}
\def \mF {\mathcal{F}}
\def \mE {\mathcal{E}}
\def \mI {\mathcal{I}}
\def \mJ {\mathcal{J}}
\def \L {\mathcal{L}}
\def \mL {\mathcal{L}}
\def\cL{{\mathcal L}}
\def \cX {\mathcal{X}}
\def \J {\mathcal{J}}
\def \ba {{\bf a}}
\def \bc {{\bf c}}
\def \bx {{\bf x}}
\def\supp {{\rm supp }}
\def \cc {{\bf c}}
\def\F{{\mathbb{F}}}
\newcommand {\divi}[1]{\Div_{#1}(F)}
\def\Div{{\rm Div}}
\def\Prin{{\rm Prin}}
\def\Cl{{\rm Cl}}
\def\Exp{{\rm Exp}}
\def\d{{\rm desc}}
\newcommand{\pict}[2]{{\mathcal J}[#2]}
\def\BbbZ{{\sf Z\hspace*{-0.95ex}Z}}
\newcommand{\Z}[1]{\relax\ifmmode\BbbZ_{#1}\else $\BbbZ_{#1}$\fi}
\newcommand{\cancel}[1]{}
\newcommand{\Proj}{\mathbb{P}}
\newtheorem{theorem}{Theorem}[section]
\newtheorem{definition}[theorem]{Definition}
\newtheorem{proposition}[theorem]{Proposition}
\newtheorem{lemma}[theorem]{Lemma}
\newtheorem{corollary}[theorem]{Corollary}
\numberwithin{equation}{section} 
\newtheorem{remark}[theorem]{Remark}
\title{Torsion Limits and Riemann-Roch Systems
for Function Fields and Applications\footnote{Version accepted for publication in\emph{IEEE Transactions on Information Theory}. DOI: 10.1109/TIT.2014.2314099, 
URL (early access version): \url{http://ieeexplore.ieee.org/stamp/stamp.jsp?tp=&arnumber=6779612}. 
Copyright (c) 2012 IEEE. Personal use of this material is permitted. Permission from
IEEE must be obtained for all other uses, in any current or future media, including reprinting/republishing this material
for advertising or promotional purposes, creating new collective works, for resale or redistribution to servers or lists, or
reuse of any copyrighted component of this work in other works.
This is an extended version of our paper~\cite{CCX11}  in
Proceedings of 31st Annual IACR CRYPTO, Santa
Barbara, Ca., USA, 2011. The results in Sections~\ref{sec:mult} and~\ref{sec:frame}
did not appear in \cite{CCX11}. A first version of this paper  has been widely circulated since November 2009.
I. Cascudo was supported in part by the STW Sentinels program under Project 10532 and in part by Cramer's NWO VICI Grant ``Mathematics of Secure Computation''. R. Cramer was supported in part by NWO VICI Grant
``Mathematics of Secure Computation.'' C. Xing was supported by the Singapore Minister of Education under Tier 1 grant RG20/13.}
}
\author{Ignacio Cascudo\thanks{CWI Amsterdam, The Netherlands (at the time of this research; currently at Aarhus University, Denmark). Email: \texttt{ignacio@cs.au.dk}.} \and Ronald Cramer\thanks{CWI Amsterdam \& Mathematical Institute, Leiden University, The Netherlands.
Email: \texttt{cramer@cwi.nl, cramer@math.leidenuniv.nl}.} \and Chaoping Xing\thanks{Division of Mathematical Sciences, Nanyang Technological University, Singapore. Email: \texttt{xingcp@ntu.edu.sg}.} }
\date{}
\begin{document}
\maketitle
\begin{abstract}
The Ihara limit (or constant) $A(q)$  has been a central problem of study in the asymptotic theory of global function fields  (or equivalently,  algebraic curves over finite fields). It addresses global function fields  with many rational points and,
 so far, most applications of this theory do not
require additional properties. Motivated by recent applications, we require global function fields
with the additional property that their zero class divisor groups contain at most a small number of $d$-torsion points. We capture this with the notion of torsion limit, a new asymptotic quantity for global function fields.
 It seems that it is even harder to determine values of this new quantity than the Ihara constant.
 Nevertheless, some non-trivial upper bounds are derived.
 Apart from this new asymptotic quantity and bounds on it, we also introduce Riemann-Roch systems of equations. It turns out that this type of equation system
 plays an important role in the study of several other problems in each of these areas: arithmetic secret sharing, symmetric bilinear complexity of multiplication in finite fields, frameproof codes and the theory of error correcting codes.
 Finally, we show how our new asymptotic quantity, our bounds on it and Riemann-Roch systems can be used to improve results in these areas.

{\bf Keywords}: {Algebraic curves, Jacobian, torsion limit, Ihara limit, secret sharing, complexity of multiplication, frameproof codes}
\end{abstract}

\section{Introduction}

Since the discovery of algebraic geometry codes by Goppa~\cite{G81} and other
applications such as low-discrepancy sequences \cite{NX96}, the study of
algebraic curves with many rational points over finite fields or,
equivalently, global function fields with many rational places, has
attracted many researchers from various areas, such as pure
mathematicians, coding theorists and algorithmically inclined
mathematicians. In the last two decades, there have been tremendous
research activities in this topic.

A crucial quantity in the asymptotic theory of
global function fields with many rational places, namely the Ihara limit, plays an important
role in coding theory and other topics. Precisely speaking, for a
given prime power $q$, the Ihara limit is defined by
\[A(q):=\limsup_{g\rightarrow\infty}\frac{N_q(g)}{g},\]
where $N_q(g)$ denotes the maximum number of rational places taken over all global function fields over $\FF_q$ of genus $g$.

The Drinfeld-Vl\v{a}du\c{t} bound states that $A(q)\leq \sqrt{q}-1$.
By Ihara~\cite{Ihara}, $A(q)=\sqrt{q}-1$ if $q$ is a square. By
Serre's Theorem~\cite{Serre85}, $A(q)\geq c\cdot \log q$ for some
 absolute real constant $c>0$
 (for which the current best lower bound~\cite{NX01} is approximately $\frac{1}{96}$).

So far, most applications of global function fields do not
require additional properties. Motivated by recent applications (arithmetic secret sharing, see below), we require global function fields
with the additional property that their zero  divisor class groups contain at most a small number of $d$-torsion points. The exact same requirements are
relevant for the study of the symmetric bilinear complexity of multiplication in finite fields. Although the latter topic started much earlier, the role of $2$-torsion points in the zero class
divisor groups was
overlooked in~\cite{STV92,B08}.

In this paper, we introduce
two new primitives for function fields over finite fields, namely the
torsion limit and systems of Riemann-Roch equations.  Our torsion
limit, which we believe is of independent interest, can in general
be upper bounded using Weil's classical theorem on torsion in
Abelian varieties (and in many cases using the Weil-pairing).
\footnote{We note that, independently, Randriambololona~\cite{Randriam10} introduced the same notion of torsion limit (for optimal families of function fields) in the context of an application to the construction of frameproof codes and proved the bounds
that follow directly from Weil's classical result.}
However, the resulting bound is far too pessimistic, as we present a
tower for which our torsion limit is {\em considerably smaller}, yet
it attains the Drinfeld-Vl\v{a}du\c{t} bound.

A system of Riemann-Roch equations consists of simultaneous equations whose variables are divisors.
 Although Riemann-Roch systems have been implicitly studied in coding theory~\cite{V87,X01,CC02,X05,Xu05,M07,NO07}
 such a concept has not been formally introduced. Moreover, we are interested in systems of a more general type than the ones considered in those papers,
 as we will explain. In several interesting cases, the existence of solutions will depend very much on the torsion in the class group. Hence, in the asymptotic case, where we consider Riemann-Roch systems in a tower of function fields, its solvability
will depend on our new torsion limit.

We give three applications in this paper that demonstrate the importance of such systems, in conjunction with our torsion limit and bounds on it.
First, arithmetic secret sharing schemes are a special kind of codes arising in secure multi-party computation~\cite{CDM00,CC06}. 
Using optimal towers of function fields, Chen and Cramer~\cite{CC06} showed the existence of ``asymptotically good'' families of such
schemes. Since then, the asymptotical results of~\cite{CC06} have had several important and surprising applications in {\em two-party} cryptography~\cite{IKOS07,IPS08,HIKN08,IKOS09,DIK10,IKOPSW11}. 
The results of~\cite{CC06} were improved and extended in~\cite{CCHP08,CCCX09}. We show how our torsion limits and Riemann-Roch
equations allow to further improve those results.

In fact, the arguments from~\cite{CC06} also show the existence of linear codes such that both the duals and ``powers'' are
simultaneously asymptotically good, where we define the $d$-th power $C^{*d}$ of a linear code $C$
to be the linear code spanned by all possible coordinate-wise products of $d$ (not necessarily distinct) words in $C$. 
The results in~\cite{CC06} imply that for any fixed integer $d\geq 2$ and for any finite field $\Fq$ such that $A(q)>2d$ there exist families of codes $C$ 
such that both $C, C^{\bot}$ and the powers $C^{*d'}$ with $2\leq d'\leq d$ are simultaneously asymptotically good.
Interestingly, if we want to extend these results to other finite fields, the concatenation techniques of \cite{CCCX09} come to no avail, as opposed to the case of secret sharing schemes. Our results in the paper
show the existence of such asymptotically good families of codes for several small finite fields for which it was so far
not yet established. For instance, for $d=2$, we show that the result holds for any finite field $\Fq$, 
$q\geq 8$, except perhaps for $q=11, q=13$; in comparison, \cite{CC06} only showed this result in the case $A(q)>4$, which cannot hold when $q\leq 25$. 

Second, we consider bounds in the context of extension field
multiplication. Shparlinski, Tsfasman, and Vl\v{a}du\c{t} \cite{STV92} initiated study of asymptotics, finding upper bounds for the limits $m_q$, $M_q$ defined in that paper. We start by
noticing a gap in the proof of their main result: there is an implicit but unjustified assumption on
the possibilities of positive Ihara limits in combination with the absence of non-trivial 2-torsion. The
same gap exists in a more recent paper (2008) on the same subject by Ballet \cite{B08}. Therefore the
upper bounds stated for $m_q$ in those papers are not justified. On the other hand, Randriambololona recently proved in \cite{Randriam} that the bound for $m_q$ in \cite{STV92} can indeed be attained in the case $A(q)>5$. We examine the connection of this extension field multiplication problem to the solvability of a system of Riemann-Roch equations, and obtain
bounds that significantly improve the state of the art for some small fields by incorporating our limit and corresponding
tower. In addition, we also show how to improve the state of the art \cite{CCXY11} regarding the upper bounds for the other limit, $M_q$ over small finite fields $\Fq$.
Third, frameproof codes were introduced in the context of digital fingerprinting by Boneh and Shaw in \cite{BS98} although a slightly different definition, which we will be using, was proposed afterwards by Fiat and Tassa~\cite{FT99}, see also~\cite{B01}. The asymptotic properties of such codes have been studied in \cite{Randriam10, Randriam13, X02}. We show how to improve those bounds in some cases.

This paper is organized as follows. Our main contributions are captured in Definition~\ref{2.1} (the torsion limit), Theorem~\ref{2.2} (bounds for this limit),
Theorem~\ref{thm:system} (sufficient conditions for Riemann-Roch system solvability), Theorems~\ref{mth:newresulttor} and~\ref{mth:newresult} (claimed
arithmetic secret sharing schemes), Theorems~\ref{thm:mqbound} and~\ref{thm:upperbound} (improvements on multiplication complexity of finite field extensions) and Theorem~\ref{3.8} (improvements on asymptotical constructions for frameproof codes). After giving some preliminaries in Section~\ref{subsec:pre},
we introduce our torsion limit in Section~\ref{subsec:torsion} and show our
bounds. In Section~\ref{sec:rr} we introduce Riemann-Roch systems of
equations and show how these may be solved using the bounds from
Section~\ref{sec:tl}. We also include a short discussion on efficient randomized solving strategies. In Section~\ref{sec:lsss} we discuss how to obtain the claimed arithmetic secret sharing schemes
and linear codes with good duals and powers. In Section~\ref{sec:mult} we show how our torsion limit and Riemann-Roch system can be applied to
study the symmetric bilinear complexity of multiplication in finite fields. Finally in Section~\ref{sec:frame} we show our application to the asymptotical study of frameproof codes.

\section{Torsion Limits}\label{sec:tl}

\subsection{Preliminaries}\label{subsec:pre}
 For convenience of the reader, we start with some definitions and notations.

 For a prime power $q$, let $\FF_q$ be a finite field of $q$ elements. An {\em algebraic function field}  over
$\FF_q$ in one variable is a field extension $F \supset \FF_q$ such
that $F$ is a finite algebraic extension of  $\FF_q(x)$ for some
$x\in F$ that is transcendental over $\FF_q$. It is assumed that
$\FF_q$ is its full field of constants, i.e., the algebraic closure
of $\FF_q$ in $F$ is $\FF_q$ itself.

The following notations will be used throughout the rest of the
paper. \begin{itemize} \item $F/\F_q$--a function field with
full constant field $\F_q$; \item $g(F)$--the genus of $F$; \item
$N(F)$--the number of rational places of $F$; \item $\PP(F)$--the
set of places of $F$ (note that $\PP(F)$ is an infinite set);
\item $\PP^{(k)}(F)$--the set of places of degree $k$ of $F$ (note
that $\PP^{(k)}(F)$ is a finite set);
\item $N_i(F)$--the number of $\F_{q^i}$-rational places,
i.e., $N_i(F)=\sum_{j|i}j|\PP^{(j)}(F)|$ (note that $N(F)=N_1(F)$); \item
$\Div(F)$--the divisor group of $F$; \item $\Div^0(F)$--the
divisor group of degree $0$; \item $\Prin(F)$--the principal
divisor group of $F$; \item $\Cl(F)$--the divisor class group
$\Div(F)/\Prin(F)$ of $F$; \item $\Cl_0(F)=\mJ_F$--the degree zero
divisor class group $\Div^0(F)/\Prin(F)$ of $F$ (note that
$\Cl_0(F)$ is a finite group); \item $\mJ_F[r]$--the group of $r$-torsion points in $\mJ_F$. \item $h(F)=|\Cl_0(F)|$--the zero
divisor class number; \item $\mA_r(F)$--the set of effective
divisors of degree $r\ge 0$ (note that $\mA_r(F)$ is a finite
set); \item $A_r(F)$--the cardinality of $\mA_r(F)$; \item
$\Cl_r(F)$--the set $\{[D]:\; \deg(D)=r\}$, where $[D]$ stands
for the divisor class containing $D$. 
\end{itemize}
In case there is no confusion, we omit the function field $F$ in some of
the above notations. For instance, $A_r(F)$ is denoted by $A_r$ if
it is clear in the context.

For a divisor $G$ of $F$, we define the Riemann-Roch space by
\[\mL(G):=\{f\in F^*:\; {\rm div}(f)+G\ge 0\}\cup\{0\}.\]
Then $\mL(G)$ is a finite dimensional space over $\F_q$ and its
dimension $\ell(G)$ is determined by the Riemann-Roch theorem which
gives
\[\ell(G)=\deg(G)+1-g(F)+\ell(K-G),\]
where $K$ is a canonical divisor of degree $2g(F)-2$. Therefore, we
always have that $\ell(G)\ge \deg(G)+1-g(F)$ and the equality holds
if $\deg(G)\ge 2g(G)-1$.

The zeta function of $F$ is defined by the following power series
\[Z_F(t):=\Exp\left(\sum_{i=1}^{\infty}\frac{N_i(F)}it^i\right)=\sum_{i=0}^{\infty}A_i(F)t^i.\]
Then Weil showed that $Z_F(t)$ is in fact a rational function of the
form
\[Z_F(t)=\frac{L_F(t)}{(1-t)(1-qt)},\]
where $L_F(t)$ is a polynomial of degree $2g(F)$ in $\ZZ[t]$,
called {\it $L$-polynomial} of $F$. Furthermore, $L_F(0)=1$. If we
factorize $L_F(t)$ into a linear product
$\prod_{i=1}^{2g(F)}(w_it-1)$ in $\CC[t]$, then Weil showed that
$|w_i|=\sqrt{q}$ for all $1\le i\le 2g(F)$.

From the definition of zeta function, one obtains
\[N_m(F)=q^m+1-\sum_{i=1}^{2g(F)}w_i^m\]
for all $m\ge 1$. This gives the Hasse-Weil bound
\[N(F)=N_1(F)\le q+1+2g(F)\sqrt{q}.\]

Function fields $F$ with a large number $N(F)$ of rational points
have a bearing on problems in coding theory \cite{TVN07,St93} as well as, for instance, in low-discrepancy
sequences \cite{NX96} and several problems in cryptography
\cite{CC06, NWX07}. In particular, the following quantity is relevant:
\[N_q(g)=\max_{F}N(F),\]
where $F$ ranges over all function fields of genus $g$ over $\F_q$.

One can imagine that it is not easy at all to determine the exact
value $N_q(g)$ for an arbitrary pair $(q,g)$. The complete
solution to this problem has been found only for $g=0,1,2$
\cite{Serre85}. The reader may refer to \cite{Geer} for a table of
values of $N_q(g)$ for some small values of $q$ and $g$.

In order to study the asymptotic behavior of $N_q(g)$ when $q$ is fixed and
$g$ tends to $\infty$, we can define the following asymptotic
quantity
\[A(q):=\limsup_{g\rightarrow\infty}\frac{N_q(g)}g.\]
An upper bound on $A(q)$ was given by Vl\v{a}du\c{t} and Drinfeld
\cite{VD83}
\[A(q)\le \sqrt{q}-1.\]
For applications, we are more interested in finding lower bounds
on this asymptotic quantity. Ihara \cite{Ihara} first showed by
using modular curves that $A(q)\ge \sqrt{q}-1$ for any square power
$q$. This result determines the exact value $A(q)$ for all square
powers, i.e.,
\begin{equation}\label{eq2.1}A(q)=\sqrt{q}-1.\end{equation}

On the other hand, no single value of $A(q)$ is known if $q$ is a
non-square. However, some lower bounds have been obtained so far.
For instance, by using modular curves and explicit function fields,
Zink~\cite{Zink}, Bezerra-Garcia-Stichtenoth~\cite{BGS05} and Bassa-Garcia-Stichtenoth~\cite{BGS08} showed
that
\begin{equation}\label{eq2.2}A(q^3)\ge
\frac{2(q^2-1)}{q+2}.\end{equation}
Recently, Garcia-Stichtenoth-Bassa-Beelen \cite{GSBB12} produced an explicit tower of function fields over finite fields $\F_{p^{2m+1}}$ for any prime $p$ and integer $m\ge 1$ and showed that this tower gives
\[A(p^{2m+1})\ge \frac{2(p^{m+1}-1)}{p+1+\epsilon}\quad \mbox{with}\quad \epsilon=\frac{p-1}{p^m-1}.\]

 Serre made use of class field
theory to show that there is an absolute positive constant $c$ such
that
\[A(q)\ge c\cdot\log(q)\]
for every prime power $q$.

On the other direction, lower bounds
on $A(q)$ have already been obtained for small prime $q$ such as $q=2,3,5,7,11,13,\dots$
etc. For instance,  in \cite{XY07}, Xing and Yeo showed that
\[A(2)\ge 0.258.\]

For a family $\mF=\{F/\F_q\}$ of function fields with
$g(F)\rightarrow\infty$ such that
$\lim_{g(F)\rightarrow\infty}{N(F)}/{g(F)}$ exists, one can define
this limit to be the {\it Ihara} limit, denoted by $A(\mF)$. It is
clear that there exists a family $\mE=\{E/\F_q\}$ of function
fields such that $g(E)\rightarrow\infty$ and the Ihara limit
$A(\mE)$ is equal to $A(q)$.

\begin{remark} In general, we can define the Ihara limit for any
family $\mF=\{F/\F_q\}$ of function fields with
$g(F)\rightarrow\infty$ by
$\limsup_{g(F)\rightarrow\infty}{N(F)}/{g(F)}$. However, for
convenience of this paper, we define the Ihara limit only for
those families $\{E/\F_q\}$ whose limit
$\lim_{g(E)\rightarrow\infty}{N(E)}/{g(E)}$ exists.\end{remark}

\subsection{Torsion Limits}\label{subsec:torsion}
Due to some recent applications to arithmetic secret sharing and multiplications in finite field
extensions, we are interested in considering, in addition to the Ihara limit of a family of function fields,
a limit for the number of torsion
points of the zero divisor class groups of these function fields.

Let $F/\F_q$ be a function field. For a positive integer $r$
larger than $1$, we denote by $\mJ_F[r]$ the $r$-torsion point
group in $\mJ_F$, i.e.,
\[\mJ_F[r]:=\{[D]\in \mJ_F:\; r[D]=0\}.\]
The cardinality of $\mJ_F[r]$ is denoted by $J_F[r]$.

For each family $\mF=\{F/\FF_q\}$ of function fields with
$g(F)\rightarrow\infty$, we define the asymptotic limit
\[J_r(\mF):=\liminf_{F\in\mF}\frac{\log_q|\mJ_F[r]|}{g(F)}.\]

We need to define an asymptotic notion involving both $J_r(\mF)$ and the Ihara limit $A(\mF).$

\begin{definition}\label{2.1} For a prime power $q$, an integer $r>1$ and a real $a\leq A(q)$, let
$\mathfrak{F}$ be the set of families $\{\mF\}$ of function fields
over $\FF_q$
 such that the genus in each family tends to $\infty$ and the Ihara limit $A(\mF)\ge a$ for every $\mF\in\mathfrak{F}$.
Then the asymptotic quantity $J_r(q,a)$ is defined by
\[J_r(q, a)=\liminf_{\mF\in\mathfrak{F}}J_r(\mF).\]
\end{definition}

Thus, for a given family, our limit  $J_r(\mF)$ measures the $r$-torsion
against the genus. The corresponding constant $J_r(q,a)$ measures,
for a given Ihara limit $a$ and for given $r$, the ``least
possible $r$-torsion.'' Note that $A(q)$, Ihara's constant, is the
supremum of $A({\mF})$ taken over all asymptotically good ${\mF}$
over $\FF_q$. For some applications such as multiplication in extension
fields in Subsection 4.2, one may be interested in function fields with
many places of higher degree and small torsion limit. The above definition
could be modified by replacing the Ihara limit by the limit of number of places of higher degree against genus.

Now we are ready to state the main result of this
section.

\begin{theorem}\label{2.2}
Let $\FF_q$ be a finite field and let $r>1$ be a prime.
\begin{enumerate}
\item[{\rm (i)}] If $r\mid (q-1)$, then $J_r(q, A(q))\leq
\frac{2}{\log_r q}$. \item[{\rm (ii)}] If $r\nmid (q-1)$, then
$J_r(q, A(q))\leq \frac{1}{\log_r q}$. \item[{\rm (iii)}] If $q$ is
square and $r\mid q$, then $J_r(q, \sqrt{q}-1)\leq
\frac{1}{(\sqrt{q}+1)\log_r q}$.
\end{enumerate}
\end{theorem}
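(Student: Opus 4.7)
For (i) and (ii), the plan is to use Weil's classical structure theorem on torsion in abelian varieties, applied to the Jacobian $\mJ_F$, which is a $g(F)$-dimensional abelian variety over $\FF_q$. In both parts the prime $r$ is coprime to the characteristic $p$ of $\FF_q$, so $\mJ_F[r](\overline{\FF}_q)\cong (\ZZ/r\ZZ)^{2g(F)}$. For (i) this gives $|\mJ_F[r]|\leq r^{2g(F)}$ immediately, hence $J_r(\mF)\leq 2/\log_r q$ for \emph{every} family $\mF$ over $\FF_q$; since $J_r(q,A(q))$ is an infimum over a subset of such families, the bound in (i) follows.

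For (ii), I would refine this using the Weil pairing $e_r\colon \mJ_F[r]\times\mJ_F[r]\to \mu_r$, which is non-degenerate, alternating, and Galois-equivariant. Restricted to $\FF_q$-rational points it lands in $\mu_r(\FF_q)$, which is trivial precisely because $r\nmid q-1$ (and $r\neq p$). Therefore the $\FF_q$-rational $r$-torsion is a totally isotropic subgroup of a non-degenerate symplectic $\FF_r$-space of dimension $2g(F)$, so has $\FF_r$-dimension at most $g(F)$. This gives $|\mJ_F[r]|\leq r^{g(F)}$, and hence $J_r(\mF)\leq 1/\log_r q$ for every family, yielding (ii) exactly as in (i).

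Part (iii) is the main obstacle. The hypothesis $r\mid q$ together with $r$ prime forces $r=p:=\mathrm{char}(\FF_q)$, so the relevant invariant is the $p$-rank (Hasse--Witt invariant) $\gamma(F)$, which satisfies $|\mJ_F[p]|=p^{\gamma(F)}$. The trivial bound $\gamma(F)\leq g(F)$ would only give $1/\log_r q$, far too weak. Moreover, unlike (i)--(ii), the target bound depends on the specific value $A(q)=\sqrt{q}-1$, so here we cannot just bound arbitrary families: we must \emph{exhibit} one family $\mF$ attaining $A(\mF)=\sqrt{q}-1$ for which $\gamma(F)/g(F)\leq 1/(\sqrt{q}+1)+o(1)$, since taking $\log_q$ of $p^{\gamma(F)}$ and dividing by $g(F)$ then yields the target $1/((\sqrt{q}+1)\log_r q)$.

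The natural candidate is an explicit asymptotically optimal tower of Garcia--Stichtenoth type over $\FF_q$ (with $q$ square), whose steps are successive $p$-elementary abelian extensions given by Artin--Schreier equations. For such a tower, the $p$-rank at each step can be controlled via the Deuring--Shafarevich formula, which relates the $p$-rank of a $p$-elementary abelian cover to the $p$-rank of the base and to the number of totally ramified places. I would track the ramification data along the tower and combine it with the known rational-point counts (which force $N(F)/g(F)\to \sqrt{q}-1$) to bound $\gamma(F)/g(F)$ asymptotically. I expect the main technical difficulty to lie in making the Deuring--Shafarevich bookkeeping sharp enough to yield exactly the constant $1/(\sqrt{q}+1)$, rather than a weaker constant of the same shape.
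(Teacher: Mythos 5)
Your plan for parts (i) and (iii) is correct and matches the paper's own approach: for (i), Weil's structure theorem for prime-to-$p$ torsion gives $|\mJ_F[r]|\leq r^{2g(F)}$ for all $F$, and for (iii) the paper proves exactly what you describe — it works with the explicit Garcia--Stichtenoth tower over $\FF_q$ ($q$ square) defined by $x_n^{\sqrt{q}-1}x_{n+1}^{\sqrt{q}}+x_{n+1}=x_n^{\sqrt{q}}$, extends the Deuring--Shafarevich formula to the finite-base-field setting, and combines it with Riemann--Hurwitz and the known ramification data to compute $i_{F^{(n)}}$ exactly and obtain $\lim i_{F^{(n)}}/g(F^{(n)})=1/(\sqrt{q}+1)$.

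However, your treatment of part (ii) has a genuine gap. You assert that ``in both parts the prime $r$ is coprime to the characteristic $p$,'' but this is false for part (ii): since $q=p^m$ one always has $p\nmid(q-1)$, so the case $r=p$ (equivalently $r\mid q$) \emph{does} fall under the hypothesis $r\nmid(q-1)$, and both your use of the {\'e}tale structure $\mJ_F[r](\overline{\FF}_q)\cong(\ZZ/r\ZZ)^{2g}$ and the Weil-pairing argument (which you yourself parenthetically flag as requiring $r\neq p$) break down there. The paper handles this by splitting (ii) into two sub-cases: when $r\mid q$, the bound follows directly from Weil's theorem on $p$-torsion of abelian varieties, namely $\mJ_{\overline{F}}[p]\cong(\ZZ/p\ZZ)^{a}$ with $a\leq g$, so $|\mJ_F[p]|\leq p^{g(F)}$ already over $\FF_q$; when $r\nmid q$ and $r\nmid(q-1)$, the Weil-pairing isotropy argument you give applies. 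Adding this $r=p$ sub-case fixes the proof; without it the claim as you have written it is incomplete.
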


The {\em first} part of Theorem \ref{2.2}, as well as the second
part when, additionally, $r|q$, is proved directly  using a
theorem of Weil~\cite{Weil48,Mum70} on torsion in Abelian
varieties. For any non-zero integer $m$, this theorem,
which holds over {\em algebraically closed} fields $K$, says that the $m$-torsion point group $A[m]$ of the variety,
is isomorphic to $(\mathbb{Z}/m\mathbb{Z})^{2g}$ if $m$ is
co-prime to the characteristic $p$ of $K$; and $A[p]$ is
isomorphic to $(\mathbb{Z}/p\mathbb{Z})^{a}$ for a non-negative
integer $a\le g$, where $g$ is the dimension of $A$. See
also~\cite{Rosen}. Clearly, this implies upper bounds when the
field is not algebraically closed.  The second part, in the case
$r\nmid (q-1)$, can be proved by using the Weil
pairing for abelian varieties (see \cite{CCX11}). The most interesting part
is the bound in the {\em third part}, which is substantially
smaller (see Subsection~\ref{sub2.3} for the detailed proof). Note
that this last bound applies to families which attain the Drinfeld-Vl\v{a}du\c{t} bound.

By using a lifting idea, we are able to obtain an upper bound on
the size of the $r^t$-torsion point group of an abelian variety from its
$r$-torsion point group, and hence we can derive the following
result from  Theorem \ref{2.2} (see \cite{CCX11} for the detailed proof).

\begin{theorem}\label{2.3}
Let $\FF_q$ be a finite field of characteristic $p$.
\begin{enumerate}
\item[{\rm (i)}] If $m\ge 2$ is an integer, then $J_m(q,A(q))\le
\log_q({dm})$, where $d=\gcd(m,q-1)$. \item[{\rm (ii)}] Write $m$
into $p^{\ell}m'$ for some $\ell\ge 0$ and an positive integer $m'$
co-prime to $p$. If $q$ is a square, then $J_m(q,\sqrt{q}-1)\le
\frac{\ell}{\sqrt{q}+1}\log_q(p)+\log_q(cm')$, where
$c=\gcd(m',q-1)$.
\end{enumerate}
\end{theorem}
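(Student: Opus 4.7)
The plan is to reduce the bound for general $m$ to the prime case already handled by Theorem~\ref{2.2}, via two steps: a multiplicativity reduction using the prime factorization of $m$, and a lifting step from $r$-torsion to $r^t$-torsion.

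First I would factor $m = \prod_i r_i^{t_i}$. The Chinese Remainder decomposition $\mJ_F[m] \cong \bigoplus_i \mJ_F[r_i^{t_i}]$ respects Galois structure and hence $\FF_q$-rational points, so that
\[
J_m(q, A(q)) \le \sum_i J_{r_i^{t_i}}(q, A(q)).
\]
This reduces the problem to bounding $J_{r^t}(q, A(q))$ for each prime $r$.

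For the prime-power case, I would bootstrap from Theorem~\ref{2.2} using the filtration $\mJ_F[r] \subseteq \mJ_F[r^2] \subseteq \cdots \subseteq \mJ_F[r^t]$. Multiplication by $r^k$ gives an $\FF_q$-equivariant embedding $\mJ_F[r^{k+1}](\FF_q)/\mJ_F[r^k](\FF_q) \hookrightarrow \mJ_F[r](\FF_q)$, producing the naive inequality $|\mJ_F[r^t](\FF_q)| \le |\mJ_F[r](\FF_q)|^t$. Together with Theorem~\ref{2.2}, this already matches the target exponent $\log_q(dm)$ when $r \nmid q-1$ or when $r^t \mid q-1$. The remaining mixed case $r \mid q-1$ with $r^t \nmid q-1$ requires a refinement: the naive step loses a factor compared to the desired $(t+v)\log_q r$, where $v = v_r(q-1)$. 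I would recover the tighter bound using the Weil pairing $e_{r^t}\colon \mJ_F[r^t] \times \mJ_F[r^t] \to \mu_{r^t}$, whose Galois equivariance forces its values on $\FF_q$-rational points to lie in $\mu_{r^t} \cap \FF_q = \mu_{r^v}$. Standard compatibilities of the Weil pairing then make $r^v \mJ_F[r^t](\FF_q)$ isotropic with respect to the non-degenerate pairing on the ambient symplectic module $\mJ_F[r^{t-v}](\Fqbar) \cong (\ZZ/r^{t-v})^{2g}$, bounding its size by $r^{(t-v)g}$. Combined with the trivial bound $|\mJ_F[r^t](\FF_q)/r^v \mJ_F[r^t](\FF_q)| \le r^{2vg}$, this yields $|\mJ_F[r^t](\FF_q)| \le r^{(t+v)g + o(g)}$, as required.

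Summing per-prime contributions gives $\log_q m + \log_q d = \log_q(dm)$, proving~(i). For part~(ii), the same argument is applied to a family realizing the Drinfeld--Vl\v{a}du\c{t} limit $\sqrt q - 1$: the prime-to-$p$ part $m'$ contributes $\log_q(c m')$ via~(i), and for the $p$-primary part $p^\ell$ the sharper base bound from Theorem~\ref{2.2}(iii) lifts via the naive filtration to $\ell \log_q(p)/(\sqrt q + 1)$ (no refinement is needed since $p \nmid q-1$). Summing the two contributions yields the bound claimed in~(ii).

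The main obstacle is the refined lifting in the mixed case $r \mid q-1$, $r^t \nmid q-1$: the naive filtration repeatedly applies the factor $2\log_q r$ from Theorem~\ref{2.2}(i), giving $2t\log_q r$ in place of the target $(t+v)\log_q r$. Bridging this gap requires exploiting that the Weil pairing on $\FF_q$-rational torsion lands in the smaller subgroup $\mu_d$ of $\mu_{r^t}$, which forces the isotropy constraint that trims one $\log_q r$ per extra step above $v$. All other steps are multiplicative bookkeeping on top of Theorem~\ref{2.2}.
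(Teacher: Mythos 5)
Your strategy coincides with the paper's: CRT reduction to prime powers, lifting from $r$-torsion to $r^t$-torsion, with a Weil-pairing refinement in the mixed case $r \mid q-1$, $r^t \nmid q-1$ where the naive filtration only yields $2t\log_q r$; the isotropy computation correctly delivers $|\mJ_F[r^t](\FF_q)| \le r^{(t+v)g}$, which is exactly the exponent required. One technical slip worth repairing: the opening reduction $J_m(q, A(q)) \le \sum_i J_{r_i^{t_i}}(q, A(q))$ does not actually follow from the CRT decomposition as stated, because $\liminf$ of a sum is bounded \emph{below}, not above, by the sum of $\liminf$s (already for $\liminf_{F\in\mF}$ at a fixed family, and again for the outer $\liminf$ over families, since the infimizing family may differ from prime to prime). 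The repair is immediate once one notices that your Weil-pairing and $p$-rank estimates are in fact \emph{per-function-field} bounds, namely $|\mJ_F[r^t](\FF_q)| \le r^{(t+\min(t,v_r(q-1)))g(F)}$ for $r\neq p$ and $|\mJ_F[p^\ell](\FF_q)| \le p^{\ell g(F)}$, so multiplying over the primes gives $|\mJ_F[m](\FF_q)| \le (dm)^{g(F)}$ for every $F$ before any limit is taken, proving (i) outright; in (ii) the $m'$-contribution $\log_q(cm')$ is a per-$F$ constant, so the additive split with $\ell J_p(\mF)$ for the Garcia--Stichtenoth tower passes cleanly through the $\liminf$. (As a further simplification, the isotropy argument can be applied once to $e_{m'}$ on $\mJ_F[m']$, which lands in $\mu_c$, to get $|\mJ_F[m'](\FF_q)|\le(cm')^g$ without any per-prime split at all.)
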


At the time when an earlier version of this paper was being prepared, Randriambololona independently introduced in~\cite{Randriam10}
the limit $J_r(q,A(q))$, in the context of an application to the construction of frameproof codes. Moreover he stated
the bounds in the first part of Theorem~\ref{2.2}, the second part of Theorem~\ref{2.2} when $r|q$ and the first part of
Theorem~\ref{2.3} when $m$ is a power of $p$.

Like the Ihara-constant $A(q)$, it could be extremely difficult to
determine the exact value of $J_r(q,a)$ for given $a$ and $q$, and
we would like to leave this as an open problem. Also, in the
context of solving general Riemann-Roch systems (see Section 3) it
makes sense to extend the definition of the limit above to the
case of $r$-torsion for a finite set of positive integers $r$ {\em
simultaneously}.

Another particular interesting case is $q=2$.  The
following result gives a bound on the $2$-torsion limit for the
family of function fields given in \cite{XY07}.

\begin{theorem}\label{2.4}
The family $\mF$ of function fields over $\F_2$ with the Ihara's
limit $97/376$ given in \cite{XY07} has $2$-torsion limit
$J_2({\mathcal F})$ at most $216/376$.
\end{theorem}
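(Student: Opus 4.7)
The plan is to exploit the explicit class-field-tower structure of the Xing--Yeo family $\mathcal{F}=\{F_i\}$ in order to bound the $2$-rank of its Jacobians level by level. Since $q=r=2$, Weil's theorem (as used in the proof of Theorem~\ref{2.2}) identifies $\mathcal{J}_{F_i}[2]\cong(\mathbb{Z}/2\mathbb{Z})^{a(F_i)}$, where $a(F_i)$ is the Hasse--Witt invariant (the $2$-rank of $\mathcal{J}_{F_i}$), with $0\le a(F_i)\le g(F_i)$. Consequently,
\[
J_2(\mathcal{F}) \;=\; \liminf_{i\to\infty}\frac{\log_2|\mathcal{J}_{F_i}[2]|}{g(F_i)} \;=\; \liminf_{i\to\infty}\frac{a(F_i)}{g(F_i)},
\]
and the problem reduces to an asymptotic upper bound on $a(F_i)/g(F_i)$.

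Next, I would recall the construction of~\cite{XY07}: one fixes an explicit base function field $F_0/\mathbb{F}_2$ and builds $\mathcal{F}$ as an iterated unramified abelian class field tower above $F_0$, in which a prescribed finite set $S$ of rational places of $F_0$ splits completely at every level. The standard unramified genus and split-place formulas give
\[
g(F_i)-1 \;=\; [F_i:F_0]\,\bigl(g(F_0)-1\bigr),\qquad N(F_i)\;\ge\;[F_i:F_0]\cdot|S|,
\]
and the invariants of $F_0$ and $|S|$ are calibrated in \cite{XY07} so that $N(F_i)/g(F_i)\to 97/376$, with $(|S|,g(F_0)-1)$ proportional to $(97,376)$.

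Finally, I would control the growth of the $2$-rank along the tower and then bound $a(F_0)$. For each unramified Galois $2$-step $F_{i+1}/F_i$ of degree $2^n$, the Deuring--Shafarevich formula gives the exact relation
\[
a(F_{i+1})-1 \;=\; 2^n\bigl(a(F_i)-1\bigr),
\]
which is parallel to the genus formula above; for any coprime-to-$2$ steps appearing in the tower, the $2$-torsion of the Jacobian is governed by the action of the corresponding finite étale Galois group on $\mathcal{J}[2]$, yielding a similar (or sharper) relation for $a$. Iterating down to $F_0$ then produces
\[
\limsup_{i\to\infty}\frac{a(F_i)}{g(F_i)} \;\le\; \frac{a(F_0)-1}{g(F_0)-1}.
\]
What remains is to bound the Hasse--Witt invariant of the concrete base $F_0$ of~\cite{XY07}, either via a direct Cartier-operator computation on $H^0(F_0,\Omega^1)$ or by reading off the $2$-rank from the $L$-polynomial and class-number data already produced there. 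The resulting bound $a(F_0)-1\le 216$, together with $g(F_0)-1=376$, then yields $J_2(\mathcal{F})\le 216/376$. The main obstacle is precisely this final step: the numerical value $216$ emerges only from an explicit calculation on the base curve, and one must either compute its Hasse--Witt matrix directly or leverage the structural description of $F_0$ in~\cite{XY07} in order to pin down (or upper-bound) its $2$-rank.
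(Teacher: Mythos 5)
Your overall strategy is exactly the paper's: pass to the Hasse--Witt invariant (which dominates the $\F_2$-rank of $\mathcal{J}_{F_i}[2]$ and hence suffices for an upper bound), use the fact that the Xing--Yeo family is a $(2;S)$-Hilbert class field tower above a fixed base $F_0$ so that every step is an unramified Galois $2$-extension, and apply Deuring--Shafarevich with vanishing ramification term to propagate $i_{F_i}-1=[F_i:F_0](i_{F_0}-1)$ in parallel with the unramified genus formula, reducing the asymptotic quantity to $(i_{F_0}-1)/(g(F_0)-1)$. (Your aside about "coprime-to-$2$ steps" is unnecessary caution: in a $(2;S)$-tower every step already is a $2$-group extension.)

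However, you explicitly stop at the point where the actual content of the theorem lies, and this is a genuine gap. The entire weight of the statement is in the numerical input $i_{F_0}-1=216$, and you offer no derivation of it — you merely list possible methods (Cartier operator on $H^0(F_0,\Omega^1)$, or reading off the $2$-rank from the $L$-polynomial) and acknowledge that you have not carried either out. Without that number the argument establishes only $J_2(\mathcal{F})\le i_{F_0}/g(F_0)\le 1$, which is no better than the generic Weil bound. The paper closes this gap by a concrete second application of Deuring--Shafarevich \emph{inside} the base field: $F_0=KL$ is the compositum of two cyclotomic pieces; one identifies an intermediate cubic (tame) field $F'$ of genus $2$ with $k\subset F'\subset K$ so that $F_0/F'$ is a degree-$32$ (hence $2$-group) Galois extension, reads off the ramification indices and degrees of the ramified places lying over $x$ and over $x^4+x^3+x^2+x+1$, and computes $i_{F_0}-1 = 32\cdot(i_{F'}-1)+4\cdot 4\cdot(8-1)+3\cdot 8\cdot(4-1)=216$. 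That explicit ramification-theoretic computation is what you would need to supply; pointing at a Cartier-operator calculation or $L$-polynomial data, without performing it, leaves the theorem unproved.
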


The proof of Theorem \ref{2.4} will be  given in Subsection \ref{sub2.3}. Note
that the bound in Theorem \ref{2.2} gives only $J_2({\mathcal
F})\le 1$.

Finally, one can show existence of certain function field families that
is essential for our applications of Sections~\ref{sec:lsss} and~\ref{sec:mult}.

\begin{theorem}\label{2.4a}
For every prime power $q\ge 8$ except perhaps for $q=11$ or $13$, there exists a family
${\mathcal F}$ of function fields over $\F_q$ such that the Ihara
limit $A({\mathcal F})$ exists and it satisfies ${A(\mathcal F})>
1+J_2({\mathcal F}).$
\end{theorem}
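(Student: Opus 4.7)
The plan is a case analysis on the structure of $q$. In each case I exhibit a family $\mathcal{F}$ of function fields over $\mathbb{F}_q$, bound $A(\mathcal{F})$ from below using a known tower construction, and bound $J_2(\mathcal{F})$ from above using Theorem~\ref{2.2}; the target strict inequality $A(\mathcal{F})>1+J_2(\mathcal{F})$ is then a short numerical check, exploiting (via the definition of $J_r(q,a)$ as a liminf) that families can be chosen with $J_2(\mathcal{F})$ arbitrarily close to the bound furnished by Theorem~\ref{2.2}.

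For a square $q\ge 9$, take $\mathcal{F}$ to be an Ihara family, so that $A(\mathcal{F})=\sqrt{q}-1\ge 2$. If $q$ is an even square (so $q\in\{16,64,\ldots\}$), Theorem~\ref{2.2}(iii) gives the very strong bound $J_2(\mathcal{F})\le 1/((\sqrt{q}+1)\log_2 q)\le 1/20$, and the inequality is immediate. If $q$ is an odd square then $2\mid q-1$, and Theorem~\ref{2.2}(i) gives $J_2(\mathcal{F})\le 2/\log_2 q$; the tightest subcase is $q=9$, where $2>1+2/\log_2 9\approx 1.63$, with the margin only widening as $q$ grows.

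For $q=p^k$ with $k\ge 3$ odd (so $q\in\{8,27,32,125,128,\ldots\}$), I would use the Bezerra-Garcia-Stichtenoth tower ($k=3$) or the Garcia-Stichtenoth-Bassa-Beelen tower (general odd $k\ge 3$) to lower-bound $A(\mathcal{F})$. For $q=8$ this gives $A(\mathcal{F})\ge 3/2$, and since $2\mid 8$ but $2\nmid 7$, Theorem~\ref{2.2}(ii) gives $J_2(\mathcal{F})\le 1/\log_2 8=1/3$, so $A(\mathcal{F})-J_2(\mathcal{F})\ge 7/6>1$. For $q=27$, $A(\mathcal{F})\ge 16/5$ while $J_2(\mathcal{F})\le 2/\log_2 27<0.43$. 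The remaining cases in this regime follow the same pattern, with margins that only widen as $q$ grows.

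The delicate remaining case is primes $q\ge 17$. Since $2\mid q-1$, Theorem~\ref{2.2}(i) gives $J_2(\mathcal{F})\le 2/\log_2 q$, reducing the goal to producing a family $\mathcal{F}/\mathbb{F}_q$ with $A(\mathcal{F})>1+2/\log_2 q$; this threshold is about $1.49$ at $q=17$ and decreases monotonically to $1$ as $q\to\infty$. For each such prime I would invoke an explicit class-field-tower construction (of the type producing the lower bounds on $A(q)$ for small primes cited in the text, e.g.\ the tower of \cite{XY07}) to obtain an asymptotically good family over $\mathbb{F}_q$ whose Ihara limit exceeds the threshold. The primes $q=11$ and $q=13$ are excluded precisely because the best available lower bounds on $A(11)$ and $A(13)$ fall short of the corresponding thresholds (approximately $1.58$ and $1.54$). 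The principal obstacle in the proof is thus this prime case: marshalling, for each prime $q\ge 17$, a class-field-tower family over $\mathbb{F}_q$ whose Ihara limit beats the numerical threshold $1+2/\log_2 q$ set by Theorem~\ref{2.2}(i); the square and higher odd-power cases reduce, as illustrated above, to routine numerical checks once the appropriate known tower is plugged in.
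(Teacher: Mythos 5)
Your treatment of the non-prime cases is sound and in line with what one would expect the paper's argument to do. For $q$ a square, the Garcia--Stichtenoth tower (for $p=2$) or an Ihara family (for $p$ odd) achieves $A(\mathcal{F})=\sqrt q-1$, and the Weil torsion bounds (underlying Theorem~\ref{2.2}) give $J_2(\mathcal{F})\le 1/\log_2 q$ or $\le 2/\log_2 q$ pointwise for any family; the numerical check at $q=9$ is the tightest and it clears. For $q=p^k$ with $k\ge 3$ odd, the Zink / Bezerra--Garcia--Stichtenoth / GSBB towers give Ihara limits that comfortably exceed the corresponding threshold, again with $q=8$ the tightest (and there the characteristic-$2$ bound $J_2\le 1/\log_2 8=1/3$ saves you). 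One phrasing quibble: you speak of ``exploiting the liminf'' to find families close to the Theorem~\ref{2.2} bound, but this is not needed for parts (i)--(ii) --- the Weil bound is a pointwise statement valid for \emph{every} family --- and for part (iii) the low-torsion family \emph{is} the optimal Garcia--Stichtenoth tower, so the same $\mathcal{F}$ already satisfies both requirements. No liminf extraction is required anywhere.

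The genuine gap is the prime case $q\ge 17$, which you flag as delicate but do not resolve. For $q$ an odd prime, Theorem~\ref{2.2}(i) gives only $J_2(\mathcal{F})\le 2/\log_2 q$, so you need a family over $\mathbb{F}_q$ with Ihara limit exceeding $1+2/\log_2 q$, i.e.\ roughly $1.49$ at $q=17$, decreasing to $1$ as $q\to\infty$. You propose to ``invoke an explicit class-field-tower construction'' for each such prime, but nothing in the paper, nor in the literature it cites, establishes $A(q)>1$ for primes $q$ in any moderate range: the cited Serre bound $A(q)\ge c\log q$ with $c\approx 1/96$ yields about $0.03$ at $q=17$, and the specific class-field-tower bounds over small prime fields (e.g.\ Xing--Yeo's $A(2)\ge 97/376\approx 0.258$) are of the same modest order of magnitude. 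There is no route visible here from such constructions to $A(q)>1.49$ for all primes $q\ge 17$. So either the intended proof over prime fields is of a fundamentally different character than the one you sketch --- for instance a family whose $2$-torsion is shown directly to be far below the generic Weil bound, so that a much smaller Ihara limit suffices --- or the prime case genuinely requires arithmetic input not contained in Theorem~\ref{2.2}. As written, your proposal does not prove the theorem for any prime $q$, and your attribution of the $q=11,13$ exclusion to ``lower bounds on $A(q)$ falling just short'' is a guess without support: the bounds fall short by more than an order of magnitude at every prime in this range, not just at $11$ and $13$.
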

Again we refer to  \cite{CCX11} for the detailed proof of {Theorem} \ref{2.4a}.

\subsection{Proof of Theorems \ref{2.2}(iii) and
\ref{2.4}}\label{sub2.3}

Let $\FF_q$ be a finite field. Write $p$ for its characteristic.
For a function field $F$ over $\FF_q$, denote by $\gamma(F)$ the $\FF_p$-dimension of $\mJ_F[p]$, i.e., $\log_p |\mJ_F[p]|$.\footnote{Note that in the definition of $\gamma(F)$ the logarithm of $\mJ_F[p]$ is taken in base $p$ as opposed to the definition of $J_p(\mF)$, where it is taken in base $q$.} 
Now, consider the constant field extension $\overline{F}=F\cdot\overline{\FF}_q$ where $\overline{\FF}_q$ denotes an algebraic closure of $\Fq$. Then the {\it Hasse-Witt} invariant $i_F$ of $F$ is defined to be the $\FF_p$-dimension of $\mJ_{\overline{F}}[p].$
It is clear that $\mJ_F[p]$ is an $\FF_p$-subspace of $\mJ_{\overline{F}}[p]$, and hence $i_F\ge \gamma(F)$.

Note that, for any family $\mF$ of function fields $F$ over $\Fq$ with $g(F)\rightarrow\infty$, 
$$J_p(\mF)=\liminf_{F\in\mF}\frac{\gamma(F)}{g(F)\log_p q}\leq \liminf_{F\in\mF}\frac{i_F}{g(F)\log_p q}.$$ 

Before proving Theorems \ref{2.2}(iii) and \ref{2.4},
we need to introduce the Deuring-Shafarevich theorem.
\begin{theorem}[Deuring-Shafarevich  (see e.g.~\cite{HKT08})]
Let $E/F$ be a Galois extension of function fields over an algebraically closed field $k$ of characteristic $p$.
 Suppose that the Galois group of the
extension is a $p$-group. Then
$$\gamma(E)-1=[E:F](\gamma(F)-1)+\sum_{P\in\Proj(F)}\sum_{\substack{Q\in\Proj(E) \\\  Q|P}}(e(Q|P)-1).$$
\end{theorem}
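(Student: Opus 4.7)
The plan is to reduce to the cyclic degree-$p$ case and then carry out a cohomological computation via Artin-Schreier theory. Since $G = \mathrm{Gal}(E/F)$ is a $p$-group, it admits a composition series $G = G_0 \supset G_1 \supset \cdots \supset G_n = \{1\}$ with each successive quotient cyclic of order $p$. The corresponding fixed fields form a tower $F = E_0 \subset E_1 \subset \cdots \subset E_n = E$ of cyclic degree-$p$ Galois extensions. Granted the cyclic degree-$p$ case, the general formula follows by iterating and telescoping: for a place $P_0$ of $F$ with chains $P_0 \subset P_1 \subset P_2 \subset \cdots$ of places in the tower, the identity $e(P_n|P_0) - 1 = \sum_{i} e(P_i|P_0)\bigl(e(P_{i+1}|P_i) - 1\bigr)$ combines with the splitting count $[E_{i+1}:E_i]/e$ (valid because the residue degree is $1$ over an algebraically closed base) to reassemble the full ramification sum.

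For the cyclic degree-$p$ case, realize $E = F(y)$ via Artin-Schreier theory with $y^p - y = f \in F$, where $f$ is chosen so that its local pole order at every place is either zero or coprime to $p$. The ramified places of $F$ in $E$ are then exactly those at which $f$ has a pole, and each such place is totally ramified with $e(Q|P) = p$. The core computation rests on the identification $\gamma(F) = \dim_{\FF_p} H^1_{\mathrm{et}}(X_F, \FF_p)$, obtained by taking étale cohomology of the Artin-Schreier short exact sequence $0 \to \FF_p \to \mathcal{O} \xrightarrow{F-1} \mathcal{O} \to 0$ on $X_F$; since $k$ is algebraically closed, $F - 1$ is surjective on $H^0(X_F, \mathcal{O}) = k$, so $H^1_{\mathrm{et}}(X_F, \FF_p) \cong \ker\bigl(F - 1 \mid H^1(X_F, \mathcal{O}_{X_F})\bigr)$. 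For the cover $\pi \colon Y \to X$, I would then compare $\dim_{\FF_p} H^1_{\mathrm{et}}(Y, \FF_p)$ to $\dim_{\FF_p} H^1_{\mathrm{et}}(X, \FF_p)$ via the Hochschild-Serre spectral sequence for the Galois cover (equivalently, by a direct analysis of $\pi_* \FF_p$ as a constructible sheaf on $X$), with ramification entering through skyscraper contributions supported at the branch locus.

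The main obstacle will be the bookkeeping for wild ramification, which is the essential distinction from the Riemann-Hurwitz formula. In Riemann-Hurwitz, the different exponent at a wildly ramified place exceeds $e - 1$; in Deuring-Shafarevich, however, only the \emph{tame} piece $(e(Q|P) - 1) = p - 1$ survives, because the higher ramification groups act by Frobenius-nilpotent operators on the relevant $\mathcal{O}$-cohomology and hence do not contribute to the Frobenius-fixed subspace that computes $\gamma$. Making this precise reduces to showing that each totally ramified point of $X$ contributes exactly a $(p-1)$-dimensional $\FF_p$-subspace to $H^1_{\mathrm{et}}(Y, \FF_p)$ beyond what is pulled back from $X$; the ``$-1$'' on either side of the formula then accounts for the constant classes in $H^0$, which must be subtracted to isolate the genuinely new ramification contribution.
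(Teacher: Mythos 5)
The paper does not prove this theorem; it states it with a reference to~\cite{HKT08} and derives a finite-field corollary from it, so there is no internal proof to compare against. Your skeleton is the right one: the reduction to the cyclic degree-$p$ case via a composition series is valid (each step $E_{i+1}/E_i$ is Galois with group $\ZZ/p$, and the ramification-number identity does telescope correctly once one sums over all places above each $P$, not just a single chain), and the identification $\gamma=\dim_{\FF_p}H^1_{\mathrm{et}}(X,\FF_p)$ via the Artin--Schreier sequence together with the Fitting decomposition of the $p$-linear Frobenius on $H^1(X,\mathcal{O}_X)$ is correct, as is the vanishing $H^2_{\mathrm{et}}(X,\FF_p)=0$ for a smooth projective curve over an algebraically closed field.

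The genuine gap is the core local computation in the cyclic case. The Hochschild--Serre spectral sequence applies to a finite \emph{\'etale} cover; for ramified $\pi\colon Y\to X$ you must first delete the branch locus $S$, set $U=X\setminus S$ and $V=\pi^{-1}(U)$, and work with $\chi_c$ there, which you never do. Your heuristic that ``higher ramification groups act by Frobenius-nilpotent operators on $\mathcal{O}$-cohomology'' does not convert into an argument; the actual reason wild ramification is invisible is algebraic and elementary. On $U$, $\pi_*\FF_p$ is a rank-$p$ local system of $\FF_p[\ZZ/p]$-modules, and since $\FF_p[\ZZ/p]\cong\FF_p[t]/(t-1)^p$ is local with residue field $\FF_p$, the sheaf $\pi_*\FF_p|_U$ is a $p$-step iterated self-extension of the constant sheaf $\FF_p$; hence $\chi_c(V,\FF_p)=\chi_c(U,\pi_*\FF_p)=p\,\chi_c(U,\FF_p)$ with \emph{no} Swan-type term at all (this is the decisive contrast with the $\ell$-adic Grothendieck--Ogg--Shafarevich formula, and the place where your proposal stops short). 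Combined with $\chi_c(U,\FF_p)=\chi(X,\FF_p)-|S|$ and $\chi(Y,\FF_p)=\chi_c(V,\FF_p)+|S|$ (each branch point being totally ramified), this gives $1-\gamma(Y)=p\bigl(1-\gamma(X)\bigr)-(p-1)|S|$, i.e.\ the stated formula. Finally, your closing dimension count is off: $\pi^*H^1_{\mathrm{et}}(X,\FF_p)\subset H^1_{\mathrm{et}}(Y,\FF_p)$ has dimension $\gamma(X)$, so the complementary dimension is $(p-1)(\gamma(X)-1+|S|)$, which mixes a non-ramification term with the ramification excess and cannot be accounted for point-by-point at the branch locus alone.
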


From this theorem, we can obtain the following corollary for function fields over \emph{finite} fields.

\begin{corollary}\label{cor:deuringfinite}
 Let $E/F$ be a Galois extension of function fields over a finite field $\Fq$ of characteristic $p$. Suppose that the Galois group of the
extension is a $p$-group. Then
$$i_E-1=[E:F](i_F-1)+\sum_{P\in\Proj(F)}\sum_{\substack{Q\in\Proj(E) \\\  Q|P}}(e(Q|P)-1)\deg Q.$$
\end{corollary}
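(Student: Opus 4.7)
The approach is to pass to the algebraic closure, apply the Deuring--Shafarevich theorem there, and then translate the resulting sum back into a sum over places of $E$ and $F$.

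Set $\overline{F} := F\cdot\Fqbar$ and $\overline{E} := E\cdot\Fqbar$. Since both $E$ and $F$ have $\FF_q$ as their full constant field, $\overline{E}/\overline{F}$ is again Galois with the same Galois group as $E/F$, hence a $p$-group of order $[E:F]$. By the definition of the Hasse--Witt invariant one has $i_F = \gamma(\overline{F})$ and $i_E = \gamma(\overline{E})$, so the Deuring--Shafarevich theorem applied to $\overline{E}/\overline{F}$ over $\Fqbar$ yields
$$i_E - 1 \;=\; [E:F]\bigl(i_F - 1\bigr) + \sum_{\overline{P}\in\Proj(\overline{F})}\;\sum_{\overline{Q}\mid \overline{P}}\bigl(e(\overline{Q}\mid \overline{P}) - 1\bigr).$$

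To rewrite this double sum in terms of places of $F$ and $E$, I would use two standard properties of constant field extensions to $\Fqbar$: first, a place $P\in\Proj(F)$ of degree $d$ splits into exactly $d$ unramified places of $\overline{F}$, each of degree $1$; second, for any $Q\in\Proj(E)$ above $P$ and any $\overline{Q}\in\Proj(\overline{E})$ above $Q$ lying over a corresponding $\overline{P}\in\Proj(\overline{F})$ above $P$, the ramification index is preserved, $e(\overline{Q}\mid \overline{P}) = e(Q\mid P)$, by multiplicativity of ramification in the tower $\overline{E}/E/F$ together with the fact that $\overline{E}/E$ and $\overline{F}/F$ are unramified.

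The key step is then an orbit-counting argument that recovers the factor $\deg Q$. Fix a pair $(P,Q)$ with $Q\mid P$: there are $\deg P$ places of $\overline{F}$ over $P$ and $\deg Q$ places of $\overline{E}$ over $Q$. The group $\mathrm{Gal}(\Fqbar/\FF_q)$ acts transitively on each of these two sets, and the incidence map sending a place of $\overline{E}$ above $Q$ to the place of $\overline{F}$ it lies over is $\mathrm{Gal}$-equivariant. Hence each $\overline{P}$ over $P$ has exactly $\deg Q/\deg P = f(Q\mid P)$ places $\overline{Q}$ over $Q$ sitting above it, and the total contribution of the pair $(P,Q)$ to the Deuring--Shafarevich sum is
$$\deg P \cdot f(Q\mid P)\cdot\bigl(e(Q\mid P) - 1\bigr) \;=\; \deg Q\cdot\bigl(e(Q\mid P) - 1\bigr).$$
Summing first over $Q\mid P$ and then over $P\in\Proj(F)$ then yields the claimed identity. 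The only real obstacle is this orbit-counting step, which forces the factor $\deg Q$ that distinguishes the $\FF_q$-version of the formula from its algebraically closed counterpart; the remainder is formal bookkeeping.
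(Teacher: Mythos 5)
Your proposal is correct and follows essentially the same route as the paper: pass to the constant field extension by $\overline{\FF}_q$, apply Deuring--Shafarevich there, and use that $\overline{E}/E$ and $\overline{F}/F$ are unramified to preserve ramification indices and recover the factor $\deg Q$. The only minor difference is that the paper gets the factor $\deg Q$ more directly, simply by observing that the $\deg Q$ places of $\overline{E}$ over $Q$ each lie over exactly one place of $\overline{F}$ and each contribute $e(Q|P)-1$, so the even $f(Q|P)$-to-one distribution your orbit-counting establishes, while true, is not actually needed.
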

\begin{proof}
 Let $\overline{E}=E\cdot\overline{\FF}_q$, $\overline{F}=F\cdot\overline{\FF}_q$ where $\overline{\FF}_q$ denotes an algebraic closure of $\Fq$.
 By elementary algebra arguments we can see that since $E/F$ is Galois and both $E$ and $F$ have the same full constant field $\Fq$,
then $\overline{E}/\overline{F}$ is also Galois and the Galois groups of both extensions are the same.

 We can therefore apply the Deuring-Shafarevich Theorem to $\overline{E}$ and $\overline{F}$, thereby obtaining:
$$\gamma(\overline{E})-1=[\overline{E}:\overline{F}](\gamma(\overline{F})-1)+\sum_{P'\in\Proj(\overline{F})}\sum_{\substack{Q'\in\Proj(\overline{E}) \\\  Q'|P'}}(e(Q'|P')-1).$$

Note that $\gamma(\overline{E})=i_E$, $\gamma(\overline{F})=i_F$ and $[\overline{E}:\overline{F}]=[E:F]$, so all we are left to do is to analyse the last term.

Given a place $P\in\Proj(F)$ of degree $k$, and a place $Q\in\Proj(E)$ of degree $m$ such that $Q|P$, there are exactly $k$ places $P'_1,\dots,P'_k\in\Proj(\overline{F})$ lying over $P$ and $m$ places $Q'_1,\dots,Q'_m\in\Proj(\overline{E})$ lying over $Q$. Each of the places $Q'_j$ lies above some $P'_i$. Moreover, all places of $\overline{E}$ lying above a place $P'_i\in\Proj(\overline{F})$ are among the $Q'_j$.
It is well known that all places in $\overline{F}$ and $\overline{E}$ have degree $1$.
Given $P'$ in $\{P'_1,\dots,P'_k\}$ and $Q'$ in $\{Q'_1,\dots,Q'_m\}$, we have $e(P'|P)=1$ and $e(Q'|Q)=1$. Consequently if $Q'$ lies above $P'$, we deduce $e(Q'|P')=e(Q|P)$ since $e(Q'|P')e(P'|P)=e(Q'|P)=e(Q'|Q)e(Q|P)$.

Thus
$$\sum_{P'\in\Proj(\overline{F})}\sum_{\substack{Q'\in\Proj(\overline{E}) \\\  Q'|P'}}(e(Q'|P')-1)=\\$$
$$\sum_{P\in\Proj(F)}\sum_{\substack{Q\in\Proj(E) \\\  Q|P}}(e(Q|P)-1)\deg Q. $$

\end{proof}

Now we are ready to prove Theorem~\ref{2.2}(iii).

\begin{proof}[Proof of Theorem \ref{2.2}(iii)]
Assume $q$ is an even power of $p$. Consider the tower ${\mathcal F}=(F^{(0)}
\subset F^{(1)} \subset \cdots)$ over $\FF_q$ introduced in
\cite{GSINV} by Garcia and Stichtenoth, recursively defined by $F^{(0)}=\F_q(x_0)$ and
$F^{(n+1)}=F^{(n)}(x_{n+1})$, where
$x_n^{\sqrt{q}-1}x_{n+1}^{\sqrt{q}}+x_{n+1}=x_n^{\sqrt{q}}$. 

Assuming for the moment that Theorems~\ref{thm:GSINV} and~\ref{2.5} stated below hold, the rest of the argument
follows immediately: indeed,
 $$\liminf_{n \to \infty}
\frac {\gamma(F^{(n)})}{g(F^{(n)})}\leq \lim_{n \to\infty} \frac{i_{F^{(n)}}}{g(F^{(n)})}=\frac{1}{\sqrt{q}+1}.$$
where the equality follows from part 3 of Theorem~\ref{thm:GSINV} and Theorem~\ref{2.5}.

Therefore
$$J_{p}(\mF)= \liminf_{n \to \infty}
\frac {\gamma(F^{(n)})}{g(F^{(n)}) \log_p q}= \frac{1}{ (\sqrt{q}+1)\log_p q}.$$

On the other hand, $A({\mathcal F})=\sqrt{q}-1$ by part 1 of Theorem~\ref{thm:GSINV}. Therefore
$$J_p(q,\sqrt{q}-1)= \frac{1}{ (\sqrt{q}+1)\log_p q}.$$ It only remains to show Theorems~\ref{thm:GSINV} and~\ref{2.5}.

\begin{theorem}\label{thm:GSINV}
\begin{enumerate}
\item
The tower $\mathcal F$ attains the Drinfeld-Vl\v{a}du\c{t} bound, i.e.,
its limit $A({\mathcal F})$ is given by $$A({\mathcal
F}):=\lim_{n\to \infty}\frac{N(F^{(n)})}{g(F^{(n)})}=\sqrt{q}-1.$$
\item Every place $P \in \Proj(F^{(n-1)})$ is either {\em unramified}, i.e. for every place $Q \in \Proj(F^{(n)})$ such that
$Q|P$ we have $e(Q|P)=1$, where $e(Q|P)$ denotes the ramification index, or {\em totally ramified}, i.e., there exists a unique $Q \in \Proj(F^{(n)})$ such that
$Q|P$, and the ramification index $e(Q|P)$ equals $[F^{(n)}:F^{(n-1)}]=\sqrt{q}$. In the latter case, it always holds that $\deg P=\deg Q$.
Moreover for every $P \in \Proj(F^{(n-1)})$, $Q \in \Proj(F^{(n)})$ such that
$Q|P$ we have $$d(Q|P)=(\sqrt{q}+2)(e(Q|P)-1),$$ where $d(Q|P)$ denotes the different exponent.
\item The genus $g(F^{(n)})$ of the function field $F^{(n)}$ is given by

$$g(F^{(n)})=\left\{
   \begin{array}{ll}g_1(q,n) & \makebox{ if $n \equiv 0 \pmod{2}$,}\\
      \\
   g_2(q,n) & \makebox{ if $n \equiv 1 \pmod{2}$.}\\
   \end{array}
\right.$$

where $$g_1(q,n):={q}^{\frac{n+1}2}+{q}^{\frac n2}-{q}^{\frac{n+2}4}-2{q}^{\frac n4}+1,$$
$$g_2(q,n):={q}^{\frac{n+1}2}+{q}^{\frac n2}-\frac12{q}^{\frac{n+3}4}-\frac32{q}^{\frac{n+1}4}-{q}^{\frac{n-1}4}+1.$$

\end{enumerate}
\end{theorem}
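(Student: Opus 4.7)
The plan is to pin down the ramification structure (part 2) first, since both the genus computation (part 3) and the lower bound on $A(\mathcal{F})$ in part 1 rest on it. The extension $F^{(n)}/F^{(n-1)}$ is generated by $x_{n+1}$, a root of the separable polynomial $\varphi(T) = x_n^{\sqrt{q}-1} T^{\sqrt{q}} + T - x_n^{\sqrt{q}} \in F^{(n-1)}[T]$. A change of variables of the form $w = x_n/x_{n+1} - 1$ (or an equivalent rational substitution) rewrites the defining relation in Artin--Schreier form $w^{\sqrt{q}} - w = u(x_n)$, where $u$ is an explicit rational function in $x_n$. From here I would apply the standard Artin--Schreier ramification lemma: a place $P \in \Proj(F^{(n-1)})$ is ramified in $F^{(n)}$ exactly when $u$ cannot be made regular at $P$ modulo the image of the $\wp$-map, and otherwise $P$ splits completely. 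A case analysis on the poles of $u$ shows the only ramified places are those lying above the unique pole of $x_0$ in $F^{(0)}$. Total ramification with $e = \sqrt{q}$ forces $\deg Q = \deg P$, and Hasse's formula for the conductor of an Artin--Schreier extension whose right--hand side has pole order $\sqrt{q}+1$ at $P$ gives the different exponent $d(Q\mid P) = (\sqrt{q}+2)(e(Q\mid P)-1)$.

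For part 3, I would feed these data into the Hurwitz genus formula
\begin{equation*}
2g(F^{(n)}) - 2 \;=\; \sqrt{q}\bigl(2g(F^{(n-1)})-2\bigr) \;+\; \sum_{\substack{P \in \Proj(F^{(n-1)}) \\ Q\mid P \text{ ramified}}} (\sqrt{q}+2)(e(Q\mid P)-1)\deg Q.
\end{equation*}
By part 2, the sum is supported on the descendants at level $n-1$ of the single pole of $x_0$; the total-ramification statement together with $\deg Q = \deg P$ lets me count those places exactly in terms of $n$. The resulting recurrence has the form $g(F^{(n)}) = \sqrt{q}\cdot g(F^{(n-1)}) + c_n$ for an explicit $c_n$ whose shape depends on the parity of $n$ (different descendants appear at odd and even levels because the ``bad'' place above $x_0 = \infty$ ramifies fully only on alternate steps). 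Telescoping with base case $g(F^{(0)}) = 0$ then yields exactly $g_1(q,n)$ or $g_2(q,n)$ as claimed.

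For part 1 the Drinfeld--Vl\v{a}du\c{t} bound gives $A(\mathcal F) \leq \sqrt{q}-1$, so only the matching lower bound is needed. I would induct on $n$ to show that every rational place of $F^{(0)}$ corresponding to $x_0 = \alpha$ with $\alpha \in \FF_q^{\ast}$ (avoiding the pole locus) splits completely in the tower: over such a place the Artin--Schreier relation $w^{\sqrt{q}} - w = u(\alpha)$ has $u(\alpha) \in \FF_q$ in the image of the trace $\FF_q \to \FF_{\sqrt{q}}$ (since $q$ is a square and the image of $\wp$ on $\FF_q$ has codimension~$1$ over $\FF_{\sqrt q}$, one can verify the condition holds for the required $\alpha$). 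This yields $N(F^{(n)}) \geq (q-\sqrt{q})\cdot(\sqrt{q})^{n}$ up to boundary terms, and combined with the genus asymptotics $g(F^{(n)}) \sim q^{(n+1)/2} = \sqrt{q}\cdot(\sqrt{q})^{n}$ from part 3 one obtains $N(F^{(n)})/g(F^{(n)}) \to \sqrt{q}-1$.

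The main obstacle, as in the original Garcia--Stichtenoth argument, is the ramification analysis at the single distinguished place: one must choose the right rational substitution to land in Artin--Schreier form, and then correctly compute the pole order of the right--hand side at each place above the pole of $x_0$ (which requires an inductive control of the ramification tower itself, not just one step). Once the Artin--Schreier reduction and the pole-order count are in place, both the closed-form genus and the splitting count fall out by clean induction.
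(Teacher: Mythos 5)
The paper does not prove this theorem; its entire proof environment reads ``See~\cite{GSINV}'', a pointer to the 1995 Garcia--Stichtenoth \emph{Inventiones} paper where the tower and all three assertions are established. So the real comparison is between your sketch and the Garcia--Stichtenoth argument.

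Your overall plan (Artin--Schreier reduction, Hurwitz formula, splitting analysis for the lower bound on $A(\mathcal F)$) is the right skeleton, and the correct substitution is $v_{n+1}=x_nx_{n+1}$, which turns the defining equation into $v_{n+1}^{\sqrt{q}}+v_{n+1}=x_n^{\sqrt{q}+1}$. But your ramification analysis has a genuine gap: the claim that ``the only ramified places are those lying above the unique pole of $x_0$'' is false, and so is the follow-up claim that the bad place ``ramifies fully only on alternate steps.'' Track the pole divisor of $x_n$ inductively. At level $1$, $x_1=v_1/x_0$ has a simple pole at the unique infinite place \emph{and} at each of the $\sqrt{q}-1$ places above $x_0=0$ where $v_1$ takes a nonzero root of $T^{\sqrt{q}}+T$. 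At each of those places $x_1^{\sqrt{q}+1}$ then has a pole of order $\sqrt{q}+1$ (coprime to $p$), so they are all totally ramified in $F^{(2)}/F^{(1)}$ by the conductor formula. Thus ramification propagates above $x_0=0$ as well, and the set of ramified places grows as you climb the tower; meanwhile the infinite place is totally ramified at \emph{every} step (by induction $x_n$ always has a simple pole there). If the only ramification were above $x_0=\infty$, the Hurwitz recursion would telescope to a plain geometric series of order $q^{n/2}$, an order of magnitude below $g_1,g_2\sim q^{(n+1)/2}$. The parity split in the genus formula comes precisely from the bookkeeping of the finite ramification above $x_0=0$ --- this is the content of Garcia--Stichtenoth's pyramid argument, and it is the hard part of the proof. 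Your ``clean induction'' would compute the wrong genus. The splitting count for part~1 is also stated loosely (the completely split locus is cut out by an Artin--Schreier/trace condition, not by $\alpha\in\FF_q^{\ast}$), but that is secondary to the ramification issue.
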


\begin{proof}
 See~\cite{GSINV}.
\end{proof}

\begin{theorem}\label{2.5}
The Hasse-Witt invariant of the function field $F^{(n)}$ is given by
$$i_{F^{(n)}}=\left\{ \begin{array}{ll}
(q^{n/4}-1)^2 & \makebox{ if $n \equiv 0 \pmod{2}$,}\\
\\
(q^{(n-1)/4}-1)(q^{(n+1)/4}-1) & \makebox{ if $n \equiv 1 \pmod{2}$.}\\
\end{array}\right.$$

In particular $$\liminf_{n \to \infty}
\frac {\gamma(F^{(n)})}{g(F^{(n)})}\leq \lim_{n \to\infty} \frac{i_{F^{(n)}}}{g(F^{(n)})}=\frac{1}{\sqrt{q}+1}.$$
\end{theorem}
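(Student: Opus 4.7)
The plan is to apply Corollary~\ref{cor:deuringfinite} to each step $F^{(n)}/F^{(n-1)}$ of the Garcia-Stichtenoth tower, derive a linear recursion for $i_{F^{(n)}}$ in $n$, and then solve it in closed form. First I would note that $F^{(n)}/F^{(n-1)}$ is Galois of degree $\sqrt{q}=p^{m}$, so its Galois group is a $p$-group and Corollary~\ref{cor:deuringfinite} applies, giving
\[
i_{F^{(n)}} - 1 \;=\; \sqrt{q}\,(i_{F^{(n-1)}} - 1) \;+\; R_n,\qquad R_n := \sum_{P\in\Proj(F^{(n-1)})}\sum_{Q\mid P}(e(Q|P)-1)\deg Q.
\]

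Next I would evaluate $R_n$ using the ramification data of Theorem~\ref{thm:GSINV}(2). At each totally ramified place one has $e(Q|P)=\sqrt{q}$, $\deg Q=\deg P$, and $d(Q|P)=(\sqrt{q}+2)(\sqrt{q}-1)$, while unramified places contribute nothing to either sum. Comparing with the different sum in Riemann-Hurwitz for the extension $F^{(n)}/F^{(n-1)}$ then yields
\[
R_n \;=\; \frac{1}{\sqrt{q}+2}\Bigl(\bigl(2g(F^{(n)})-2\bigr)-\sqrt{q}\bigl(2g(F^{(n-1)})-2\bigr)\Bigr),
\]
so Theorem~\ref{thm:GSINV}(3) supplies a completely explicit value for $R_n$ in terms of fractional powers of $q$.

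Starting from $g(F^{(0)})=0$, hence $i_{F^{(0)}}=0$, the recursion unrolls to
\[
i_{F^{(n)}} \;=\; 1-\sqrt{q}^{\,n} \;+\; \sum_{k=1}^{n}\sqrt{q}^{\,n-k}R_k.
\]
The remaining step is to plug in $g_1(q,k)$ and $g_2(q,k)$ and collect terms. The sum breaks into a handful of geometric subseries in $\sqrt{q}$ and $q^{1/4}$, and the parities of $k$ must be tracked separately. The main obstacle is this bookkeeping: keeping the quarter-integer exponents straight and matching the two cases $n$ even / $n$ odd. To sidestep the direct summation I would likely verify the claimed closed forms $(q^{n/4}-1)^2$ and $(q^{(n-1)/4}-1)(q^{(n+1)/4}-1)$ by induction on $n$, which reduces the task to checking that each candidate formula satisfies $i_{F^{(n)}}-1=\sqrt{q}(i_{F^{(n-1)}}-1)+R_n$ for the appropriate parity; this is an algebraic identity in $q^{1/4}$ that can be checked in a few lines.

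Finally, from the closed forms the asymptotics are immediate: $i_{F^{(n)}}\sim q^{n/2}$, while $g(F^{(n)})=q^{(n+1)/2}+q^{n/2}+O(q^{(n+2)/4})=q^{n/2}(\sqrt{q}+1)+o(q^{n/2})$, so $i_{F^{(n)}}/g(F^{(n)})\to 1/(\sqrt{q}+1)$. Since $\gamma(F^{(n)})\le i_{F^{(n)}}$, the same upper bound holds for $\liminf_{n}\gamma(F^{(n)})/g(F^{(n)})$, giving the second assertion of the theorem.
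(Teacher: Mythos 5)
Your proposal is correct and follows essentially the same route as the paper: combine the Deuring--Shafarevich recursion from Corollary~\ref{cor:deuringfinite} with the Riemann--Hurwitz formula for $F^{(n)}/F^{(n-1)}$, use the ramification data from Theorem~\ref{thm:GSINV}(2) to relate the two, and then solve the resulting linear recursion with initial condition $i_{F^{(0)}}=0$ (the paper likewise invokes induction rather than summing directly). The asymptotic conclusion via $i_{F^{(n)}}\sim q^{n/2}$, $g(F^{(n)})\sim q^{n/2}(\sqrt{q}+1)$ and the inequality $\gamma(F^{(n)})\le i_{F^{(n)}}$ also matches the paper.
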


\begin{proof}

Fix some $n\geq 1$ and for the sake of notation let $E:=F^{(n)}$, $F:=F^{(n-1)}$. Consider the extension $E/F$. This is an
Artin-Schreier extension, hence its Galois group is a
$p$-group. By the theorem of Riemann-Hurwitz (see e.g.~\cite{St93})
and part 2) of Theorem~\ref{thm:GSINV} above,

\begin{equation}\label{eq:one}
2\cdot g(E)-2=\sqrt{q}\cdot (2g(F)-2)+(\sqrt{q}+2)\cdot\sum_{P\in\Proj(F)}\sum_{\substack{Q\in\Proj(E) \\\ Q|P}}(e(Q|P)-1)\deg Q.
\end{equation}

By Corollary~\ref{cor:deuringfinite}

\begin{equation}\label{eq:two}
i_E-1=\sqrt{q}\cdot
(i_F-1)+\sum_{P\in\Proj(F)}\sum_{\substack{Q\in\Proj(E)
\\\  Q|P}}(e(Q|P)-1)\deg Q.
\end{equation}

Combining equations (\ref{eq:one}) and (\ref{eq:two}), we find

$$i_E=\sqrt{q} \cdot i_F+\dfrac{2\cdot g(E)-2\sqrt{q}
\cdot g(F)-\sqrt{q}^2+\sqrt{q}}{\sqrt{q}+2}$$

This, of course, holds for any $n\geq 1$, $E:=F^{(n)}$, $F:=F^{(n-1)}$. Using the fact that $i_{F^{(0)}}=0$ and applying induction,
the result follows.
\end{proof}

This concludes the proof of Theorem \ref{2.2}(iii).
\end{proof}

We can use the same kind of argument applied to a different tower to prove Theorem~\ref{2.4}:

\begin{proof}[Proof of Theorem~\ref{2.4}:]
In \cite{XY07}, Xing and Yeo gave an example of a tower ${\mathcal F}=(F^{(0)}
\subset F^{(1)} \subset \cdots)$ of function fields over $\F_2$ with the Ihara limit
$97/ 376 = 0.257979\dots$. 
%(by a tower, we mean that $F_i\subset
%F_{i+1}$ for all $i\ge 0$). 
Using cyclotomic function fields, they
constructed a function field $F = F^{(0)}$ over $\F_2$ of genus $377$,
which admits an infinite $(2; S)$-Hilbert class field tower for a
set $S \subset \PP_F$ of places of $F$, such that $S'
=\PP_F\setminus S$ consists of $97$ rational places of $F$. At each
step $F^{(i+1)}/F^{(i)}$, it is unramified. Hence, to compute the Hasse-Witt invariant of
$F^{(i)}$, it is sufficient to compute the Hasse-Weil invariant of $F^{(0)}$ by using
the formula of Deuring-Shafarevich.

To do so, we briefly recall the construction of the function field
$F$. For more details, the reader may refer to \cite{XY07}. Let
$k=\F_2(x)$ be the rational function field over $\F_2$. Let $M =
(x^4 +x^3 +x^2 +x+1)^2 \in \F_2[x]$ and let $N := x^4$. Denote by
$k_M$ (resp. $k_N$) the cyclotomic function field over $k$ with
modulus $M$ (resp. modulus $N$). Let $K$ be the subfield of $k_M$
fixed by the cyclic subgroup $< x >$ of Gal$(k_M/k) = (\F_2[x]/ M
)^*$ and let $L$ be the subfield of $k_N$ that is fixed by the
cyclic subgroup $< (x + 1)^2>$ of Gal$(k_N/k) = (\F_2[x]/N )^*$.
We have $[K : k] = 24$ and $[L : k] = 4$. Define $F := KL$, the
composite of the fields $K$ and $L$. The only ramified place in
$K/k$ is the place corresponding to the irreducible polynomial
$x^4 + x^3 + x^2 + x + 1$. It is totally ramified with different
exponent $44$. In the extension $L/k$ the only ramified place is
the zero of $x$. It is totally ramified with different exponent
$10$.

 From the ramification
in $K/k$ and $L/k$, it follows that $K$ and $L$ are linearly
disjoint over $k$. We have $[F : k] = 2^5\times 3$. The fixed field
of the $2$-Sylow subgroup of Gal$(F/k)$ is generated over $k$ by an
element $w$, whose irreducible polynomial over $k$ is given by 
$$T^3+ (x^4 + x^3 + x^2 + x + 1)  T^2 + (x^5 + 1)  T + (x^4 + x^3 + x^2 +
x + 1) \in k[T].$$
 Let $F' = k(w)$. We have $k\subset F' \subset K$.
The only ramified place in $F'/k$ is the place corresponding to the
irreducible polynomial $x^4+x^3+x^2+x+1$. It is tamely ramified with
ramification index $3$. Hence the genus of $F'$ is $2$. Next by
computing the Hasse-Witt invariant of $F$ we know that in the degree
$32$ extension $F/F'$ the only ramified places are the places lying
over the places of $k$ associated to the irreducible polynomials $x$
and $x^4 + x^3 + x^2 + x + 1$. The corresponding ramification
indices are $4$ and $8$, respectively. So we have

\[i_F - 1 = 32  (2 - 1) + 4 \times 4 \times (8 -1) + 3 \times 8 \times (4 - 1) =
216.\]
 For the $(2; S)$-Hilbert class field tower of $F = F^{(0)}$, we hence
have \[g(F^{(n)}) - 1 = [F^{(n)} : F^{(0)}] (g(F^{(0)}) - 1) = 376  [F^{(n)} : F^{(0)}]\]
and \[i_{F^{(n)}} - 1 = [F^{(n)} : F^{(0)}]  (i_{F^{(0)}} - 1) = 216 [F^{(n)} :
F^{(0)}].\]
 Therefore,\[ \lim_{n\rightarrow\infty}\frac{i_{F^{(n)}}}{g(F^{(n)})} = \frac{216}{376}
 =0.574468\dots.\]
\end{proof}
The Deuring-Shafarevich theorem has been used in \cite{BB10} to analyze the $p$-rank
 of the function fields for other optimal towers over $\Fq$, for $q$ square. However,
 the resulting bounds for the torsion limits for those towers are worse (for our 
applications) than that of the first Garcia-Stichtenoth tower, obtained above.

\section{Riemann-Roch Systems of Equations}\label{sec:rr}

Let $\FF_q$ be a finite field and let $F$ be an algebraic function
field over $\FF_q$.

\begin{definition}
Let $u\in \ZZ_{>0}$ and let $Y_i\in \mbox{Cl}(F)$,
$m_i\in\ZZ\setminus{\{0\}}$ for $i=1,\dots,u$. The
\emph{Riemann-Roch system of equations} in the indeterminate $X$  is
the system  $\{\ell(m_iX+Y_i)=0\}_{i=1}^u$ determined by these data.
A solution is some $[G]\in \mbox{Cl}(F)$ which satisfies all
equations when substituted for $X$.
\end{definition}

While Riemann-Roch systems have been (implicitely) used before in the construction of codes with good asymptotic properties, for instance in~\cite{V87,X01,CC02,X05,Xu05,M07,NO07}, they were of a less general type.
Namely, $m_i=\pm 1$ for all $i$. As we shall see soon, dealing with the more general case where $m_i\neq \pm 1$ leads us to consider $m_i$-torsion in the class group.

One observation about the systems is that $X$ is a solution of the equation
$\ell(m_iX+Y_i)=0$ as long as $\deg(m_iX+Y_i)<0$ since we have
$\ell(m_iX+Y_i)=0$ in this case. This suggests that, if we want to prove the existence of solutions of certain fixed degree, we should only
consider those equations $\ell(m_iX+Y_i)=0$ in the Riemann-Roch
system with $\deg(m_iX+Y_i)\ge0$.

The following theorem shows that a solution of degree $d$ exists
if a certain numerical condition is satisfied that involves the
class number, the number $A_{r_i}$ of effective divisors of degree
$r_i$ and the cardinality of the $m_i$-torsion subgroups of the
degree-zero divisor class group, where the $m_i$ are determined by
the system and the $r_i$ are determined by $d$ and the $m_i$.

\begin{theorem}\label{thm:system}
Consider the Riemann-Roch system of equations
$$\{\ell(m_iX+Y_i)=0\}_{i=1}^u.$$ Let $d_i=\deg Y_i$ for
$i=1,\dots,u$. Write $h:=h(F)$ the class number. Denote by
$A_r$ the number of effective divisors of degree~$r$ in
$\Div(F)$ for $r\ge 0$, and $0$ for $r<0$. Let $s\in \ZZ$
and define $r_i=m_is+d_i$ for $i=1, \ldots, u$.
If $$h>\sum_{i=1}^u A_{r_i}\cdot|\mJ_F[{m_i}]|,$$ then the Riemann-Roch system has a
solution $[G]\in \mbox{Cl}_s(\FF)$.
\end{theorem}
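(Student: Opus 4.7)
The plan is to reduce the theorem to a counting (pigeonhole) argument inside the finite set $\Cl_s(F)$. First I would observe that by the F.~K.~Schmidt theorem every function field over a finite field admits a divisor of degree $1$, so $\Cl_s(F)$ is a nonempty coset of $\Cl_0(F) = \mJ_F$ and has cardinality exactly $h$. It will then suffice to show that the subset of $\Cl_s(F)$ on which at least one of the $u$ equations fails has cardinality strictly less than $h$.

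For each $i$, let $B_i := \{[G] \in \Cl_s(F) : \ell(m_iG + Y_i) > 0\}$; these are exactly the classes in $\Cl_s(F)$ that \emph{do not} satisfy the $i$-th equation. I would then consider the set map $\phi_i : \Cl_s(F) \to \Cl_{r_i}(F)$ defined by $[G] \mapsto [m_iG + Y_i]$. Whenever $\phi_i([G_1]) = \phi_i([G_2])$ one has $m_i\bigl([G_1] - [G_2]\bigr) = 0$ in $\Cl(F)$, and since $[G_1] - [G_2]$ has degree $0$ it lies in $\mJ_F[m_i]$; hence every nonempty fibre of $\phi_i$ has size at most $|\mJ_F[m_i]|$.

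Next I would bound $|\phi_i(B_i)|$. A class in $\Cl_{r_i}(F)$ has positive Riemann--Roch dimension precisely when it contains an effective divisor, so assigning to each effective divisor of degree $r_i$ its class gives a surjection of an $A_{r_i}$-element set onto the classes with $\ell>0$. Thus $|\phi_i(B_i)| \le A_{r_i}$ (which is $0$ when $r_i<0$, by the stated convention, in which case $B_i$ is already empty). Combining the two estimates yields $|B_i| \le A_{r_i}\cdot|\mJ_F[m_i]|$.

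Finally, the union bound together with the hypothesis $h > \sum_{i=1}^u A_{r_i}\cdot|\mJ_F[m_i]|$ gives $\bigl|\bigcup_{i=1}^u B_i\bigr| < h = |\Cl_s(F)|$, so $\Cl_s(F)\setminus\bigcup_{i=1}^u B_i$ is nonempty and any element of it is a solution of degree $s$. I do not expect a genuine obstacle here, as the whole argument is combinatorial once the pieces are in place; the only subtle point is the fibre-size count for $\phi_i$, which is precisely where the $m_i$-torsion subgroup $\mJ_F[m_i]$ enters and explains why this quantity (and, asymptotically, the torsion limit of Section~\ref{sec:tl}) governs solvability of Riemann--Roch systems.
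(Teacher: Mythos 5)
Your proof is correct and follows what is essentially the standard (and, to my knowledge, the paper's) approach: identify $\Cl_s(F)$ with a nonempty coset of $\mJ_F$ of size $h$, bound the "bad" classes for each equation by pulling the count of effective classes of degree $r_i$ back through the multiplication-by-$m_i$ map whose fibres have size at most $|\mJ_F[m_i]|$, and then apply the union bound. All the steps -- well-definedness of $\phi_i$ (since $\ell$ is a class invariant and the degrees match), the $r_i<0$ case, and the fibre-size estimate via $[G_1]-[G_2]\in\mJ_F[m_i]$ -- are handled correctly, so this is a complete argument matching the paper's method.
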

We refer to \cite{CCX11} for the detailed proof of Theorem \ref{thm:system}.

\begin{remark}  (``Solving by taking any divisor $X$  of large enough degree'')\label{rem:largedegree}
\begin{itemize}
\item[(i)] If $r_i<0$ for all $i=1,\dots,u$, then the inequality
in Theorem \ref{thm:system} is automatically satisfied and hence the
Riemann-Roch system always has a solution.

\item[(ii)] In many
scenarios in algebraic geometry codes, one can simply argue for a
solution of the Riemann-Roch system by assuming that $r_i<0$ for all
$i=1,\dots,u$.

\item[(iii)] For instance, in~\cite{CC06}, it was also simply assumed $r_i<0$ to obtain strongly multiplicative linear secret sharing schemes. But this does not always give the best results.
 In particular, in Section~\ref{sec:lsss}, we will show how we can employ Theorem
\ref{thm:system} to get improvements, {\em especially for small
finite fields}.
\end{itemize}

\end{remark}

It will often be more convenient to write systems as defined over
$\mbox{Div}(F)$ rather than $\mbox{Cl}(F)$.

The condition in Theorem~\ref{thm:system} involves
the number of positive divisors of certain degrees and the class number.
The following bound will be useful in the applications. The proof is based on careful
manipulations with the zeta function of $\F$.

\begin{proposition}\label{propo:Arh}
 Let $F$ be an algebraic function field over $\FF_q$.
Write $g$  for the genus $g(F)$ and $h$  for the class number
$h(F)$. For $r\in \ZZ_{\geq 0}$, write $A_r$ for the number of
effective divisors of degree~$r$ in $\Div(\FF)$. Suppose
$g\geq 1$. Then, for any integer $r$ with $0\leq r \leq g-1$,
$$\frac{A_r}{h}\leq \frac{g}{q^{g-r-1}(\sqrt{q}-1)^2}.$$
\end{proposition}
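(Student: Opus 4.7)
The proposed proof works with the zeta identity $Z_F(t) = \sum_{r\ge 0} A_r t^r = L_F(t)/((1-t)(1-qt))$ together with the Weil factorisation $L_F(t) = \prod_{i=1}^{2g}(1-\omega_i t)$, $|\omega_i|=\sqrt{q}$, so that $L_F(1)=h$ and $|1-\omega_i|\ge \sqrt{q}-1$. The plan is to first reduce the claim to a bound on $A_{2g-2-r}$, and then access this coefficient analytically via the polynomial part of $Z_F(t)$ and Cauchy's integral formula.

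The first step is a Serre-duality identity. Starting from $A_r = \sum_{[D]\in \Cl_r(F)} (q^{\ell(D)}-1)/(q-1)$, applying Riemann-Roch $\ell(D)=\ell(K-D)+r+1-g$ and using the bijection $[D]\mapsto[K-D]$ between $\Cl_r(F)$ and $\Cl_{2g-2-r}(F)$ yields the functional equation
\[
A_r = q^{r+1-g}\,A_{2g-2-r} + \frac{h(q^{r+1-g}-1)}{q-1}.
\]
For $0\le r\le g-1$ the correction term is non-positive, so $A_r \le q^{r+1-g} A_{2g-2-r}$, and the proposition is equivalent to a bound of the form $A_{2g-2-r} \le hg/(\sqrt{q}-1)^2 + h(q^{g-r-1}-1)/(q-1)$.

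The second step accesses this quantity through the partial-fraction decomposition $Z_F(t) = Q_F(t) - h/((q-1)(1-t)) + hq^{1-g}/((q-1)(1-qt))$, where $Q_F$ is a polynomial of degree $\le 2g-2$. A short computation shows that $Q_r = q^{r+1-g}A_{2g-2-r}$, so bounding the target reduces to bounding the coefficients $Q_r$. I would apply Cauchy's formula $Q_r = (2\pi i)^{-1}\oint_{|t|=\rho} Q_F(t)\,t^{-r-1}\,dt$ on a circle with $1/q<\rho<1$, using the explicit expression
\[
Q_F(t) = \frac{L_F(t) + h(1-qt)/(q-1) - hq^{1-g}(1-t)/(q-1)}{(1-t)(1-qt)},
\]
and the Weil bound $|L_F(t)/h| \le \prod_i |1-\omega_i t|/|1-\omega_i|$ together with $|1-\omega_i|\ge \sqrt{q}-1$.

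The main obstacle will be to produce the \emph{linear} factor $g$ in the numerator, since a naive Weil product gives a factor exponential in $g$. The resolution is to exploit the vanishing of the numerator of $Q_F$ at $t=1$ and $t=1/q$, which forces a first-order Taylor correction involving $\sum_i 1/(1-\omega_i)$; this sum is bounded in absolute value by $2g/(\sqrt{q}-1)$, producing both the $g$ and one of the $(\sqrt{q}-1)^{-1}$ factors, with the other $(\sqrt{q}-1)^{-1}$ coming from the $(1-t)(1-qt)$ in the denominator of $Q_F$. Combining this refined estimate on $Q_r$ with the functional-equation inequality from the first step yields $A_r/h \le g/(q^{g-r-1}(\sqrt{q}-1)^2)$, as required; equivalently, one may invoke at this stage a Lachaud--Martin-Deschamps-type lower bound on $h$, which encodes the same underlying analytic input.
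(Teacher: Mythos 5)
Your opening reductions are correct and cleanly stated: the Serre-duality identity $A_r = q^{r+1-g}A_{2g-2-r} + h(q^{r+1-g}-1)/(q-1)$ is valid, the partial-fraction decomposition of $Z_F$ is right, and the coefficient identity $Q_r = q^{r+1-g}A_{2g-2-r}$ follows from these. Up to that point you have, in fact, re-derived (in a somewhat different notation) the same polynomial identity that the paper quotes from Niederreiter--Xing. The paper works with the form
$\sum_{i=0}^{g-2}A_it^i + \sum_{i=0}^{g-1}q^{g-1-i}A_it^{2g-2-i} = (L(t)-ht^g)/((1-t)(1-qt))$,
whereas you phrase it as $Z_F = Q_F + \text{poles}$ with $Q_r = q^{r+1-g}A_{2g-2-r}$; these encode the same information.

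The gap is in the analytic estimate. The paper's key move is \emph{not} a Cauchy integral but an evaluation at $t = 1/q$: since all coefficients of the polynomial in question are nonnegative, $A_r/q^r$ is dominated by the full sum $\sum_{i=0}^{g-2}A_i/q^i + \sum_{i=0}^{g-1}A_i/q^{g-1}$, which l'H\^opital identifies with $\frac{h}{q^{g-1}(q-1)}\bigl(g + \sum_i \omega_i/(q-\omega_i)\bigr)$; the bracketed sum is then bounded by $g(\sqrt{q}+1)/(\sqrt{q}-1)$ using $|\omega_i|=\sqrt{q}$. The factor $g$ arises because the derivative $L'(1/q)$ is a \emph{sum} of $2g$ terms, each $O(1/(\sqrt{q}-1))$, and \emph{positivity of the coefficients} is what lets one pass from a bound on the total to a bound on a single $A_r$. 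Your Cauchy-integral approach bounds $|Q_r| \le \rho^{-r}\max_{|t|=\rho}|Q_F(t)|$. This has two problems that are not fixable by ``first-order Taylor corrections'': (i) the prefactor $\rho^{-r}$ never matches the required $q^{r+1-g}$ for any fixed $\rho \in (1/q,1)$ --- for example at $\rho=1/\sqrt{q}$ you lose a factor $q^{g/2-r/2}$, which blows up; (ii) $\max_{|t|=\rho}|Q_F(t)|$ is controlled by the Weil product over all $2g$ zeros, which generically grows exponentially in $g$, and the numerator's vanishing at $t=1,1/q$ only removes two of the $2g$ factors. Neither the $\rho^{-r}$ scaling nor the linear-in-$g$ behaviour appears, so the sketch as written does not yield the proposition. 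To repair it within your framework, drop the Cauchy formula and instead use the positivity you already established ($Q_r \ge 0$ for all $r$): then $Q_r q^{-r} \le Q_F(1/q)$, and evaluating the limit $Q_F(1/q)$ reproduces exactly the paper's l'H\^opital computation.
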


A very brief proof of Proposition \ref{propo:Arh} was given in \cite{CCX11}. Here we give a detailed proof.

\begin{proof}
For $i\geq 2g-1$ the value of $A_i$ is known as a function of
$q,g,h,i$ (see Lemma~{5.1.4} and Corollary~{5.1.11} in~\cite{St93}).
This has been exploited in
 Lemma~3~{(ii)} from~\cite{NX96}, to show that
$$\sum_{i=0}^{g-2} A_it^i + \sum_{i=0}^{g-1}q^{g-1-i}A_it^{2g-2-i}=\frac{L(t)-ht^g}{(1-t)(1-qt)}$$
by  manipulations of power series, where $L(t)$ is the $L$-polynomial in the zeta function of $F$.

The claim will be derived from a relation
that is obtained by taking the limit as $t$ tends to $1/q$ on both
sides of the equation above, where  l'H\^{o}pital's Rule  is applied
on the RHS, then finding an expression for $L'(1/q)$ (the
``left-over term''), and substituting that back in.

Taking this limit,
$$\sum_{i=0}^{g-2}\frac{A_i}{q^i}+\sum_{i=0}^{g-1}\frac{A_i}{q^{g-1}}=\lim_{t\rightarrow 1/q}\frac{L(t)-ht^g}{(1-t)(1-qt)},$$
and applying l'H\^{o}pital's rule ($(f(t))'|_{t=a}$ denotes the
derivative of $f$ evaluated at $t=a$), it follows that

$$\frac{(L(t)-ht^g)'|_{t=1/q}}{((1-t)(1-qt))'|_{t=1/q}}
=\frac{L'(1/q)-gh/q^{g-1}}{-q(1-1/q)}=$$
$$
=\frac{gh-q^{g-1}L'(1/q)}{(q-1)q^{g-1}}.
$$

The term $L'(1/q)$ can be evaluated as follows. By differentiation,
$$L'(t)=\sum_{i=1}^{2g} L(t)\cdot \frac{-\omega_i}{1-\omega_i t},$$
and
hence,$$L'\left(\frac{1}{qt}\right)=L\left(\frac{1}{qt}\right)\cdot
\sum_{i=1}^{2g}(qt)\cdot \frac{-\omega_i}{qt-\omega_i}.$$ Evaluation
of  $L(1/q)$ is straightforward by combining the Functional Equation
for $L$-polynomials and the fact that $L(1)=h$ (see ~\cite{St93}).
 Namely,
 $$L\left(\frac1q\right)=q^{g}\left(\frac1q\right)^{2g}L(1)=\frac h{q^g}.$$
 Therefore,
$$L'\left(\frac1q\right)=\frac{h}{q^{g-1}}\cdot \sum_{i=1}^{2g}\frac{-\omega_i}{q-\omega_i}.$$

Substituting the expression for $L'(1/q)$ back in, it follows that
$$\sum_{i=0}^{g-2}\frac{A_i}{q^i}+\sum_{i=0}^{g-1}\frac{A_i}{q^{g-1}}=
\frac{h}{q^{g-1}(q-1) }\cdot \left(g+\sum_{i=0}^{2g}
\frac{\omega_i}{q-\omega_i}\right).$$

Note that, trivially, by writing it appropriately as a fraction of
the other expressions in the equation, the expression between
brackets on the right-most side must be a positive number. Using
this and the fact  $|\omega_i|=\sqrt{q}$ for $i=1,\ldots, 2g$, it
holds, for $0\leq r\leq g-1$, that {\small
$$
\frac{A_r}{q^r}\leq
\sum_{i=0}^{g-2}\frac{A_i}{q^i}+\sum_{i=0}^{g-1}\frac{A_i}{q^{g-1}}=
\frac{h}{q^{g-1}(q-1)}\cdot \left|g+\sum_{i=0}^{2g}
\frac{\omega_i}{q-\omega_i}\right|$$
$$\leq \frac{h}{q^{g-1}(q-1) }\cdot
\left(g+\sum_{i=0}^{2g} \frac{|\omega_i|}{q-|\omega_i|}\right)=
\frac{gh}{q^{g-1}(q-1)}\cdot \left(1+\frac{2}{\sqrt{q}-1}\right)$$
$$
=\frac{gh}{q^{g-1}(q-1)}\cdot
\left(\frac{\sqrt{q}+1}{\sqrt{q}-1}\right) =\frac{gh}{q^{g-1}\cdot
(\sqrt{q}-1)^2}.
$$}
and  the claimed result follows.
\end{proof}

\begin{remark}[Efficient randomized solving strategy]
Except in the cases where Remark~\ref{rem:largedegree} applies, we are not aware of any efficient deterministic strategy to solve efficiently the Riemann Roch 
systems of equations appearing in our applications.
However, in many circumstances, there is an efficient \emph{randomized} strategy that produces a divisor that is a solution with \emph{high probability}.
Namely, if the number of non-solutions (which is bounded by the right hand side of the inequality in Theorem~\ref{thm:system}) is negligible
as a function of the class number, then a uniformly random element from $\mbox{Cl}_s(\FF)$
will be a solution with overwhelming probability.
Assuming both means for efficient sampling in $\mbox{Cl}_s(\FF)$ according to a distribution sufficiently close to uniform and for efficient construction of generator matrices of the algebraic geometric codes associated to the sampled divisors, there exist efficient probabilistic constructions of the asymptotically good families of codes in our applications.
As for the sampling issue, we note that it can be done under mild conditions (see \cite[Section 5.3.2.]{Boe} or \cite[Theorem 5]{Hess12}). As for the 
construction of generator matrices, not much is known in full generality but results for the construction of bases of Riemann-Roch spaces of
general divisors can be found in~\cite{Hess12}. Generator matrices for algebraic geometric codes on function fields of a tower of Garcia-Stichtenoth were explicitely constructed in~\cite{SAK01} although only one-point divisors are considered.

\end{remark}

\section{Application 1: Arithmetic Secret Sharing}\label{sec:lsss}

Our first application concerns the asymptotic study of \emph{arithmetic secret sharing schemes},
which was first considered in~\cite{CDM00,CC06} in the context of secure multi-party computation.
Since then, the asymptotical results from~\cite{CC06} have had important and surprising applications in {\em two-party} cryptography as well~\cite{IKOS07,IPS08,HIKN08,IKOS09,DIK10,IKOPSW11}. For a more detailed discussion of the motivation, results and applications, please refer to~\cite{CCX11}.

One motivation of this section is to show an important application of our torsion limits and Riemann-Roch systems introduced in the previous sections. As the proofs  of most results in this section can be found in \cite{CCX11}, we state our results without detailed proof.

We first define arithmetic secret sharing schemes and  then show how our torsion limits help to improve prior results significantly.

Let $k,n$ be integers with $k,n\geq 1$. Consider the $\Fq$-vector space $\Fq^k\times\Fq^n$, where $\Fq$ is an arbitrary finite field.

\begin{definition} 
The $\Fq$-vector space morphism
$$\pi_0:\Fq^k\times\Fq^n\rightarrow \Fq^k$$
is defined by the projection  $$(s_1,\dots,s_k,c_1,\dots,c_n)\mapsto(s_1,\dots,s_k).$$

For each $i\in\{1,\dots,n\}$, the $\Fq$-vector space morphism   $$\pi_i:\Fq^k\times\Fq^n\rightarrow \Fq$$ is defined by the projection $$(s_1,\dots,s_k,c_1,\dots,c_n)\mapsto c_i.$$
For $\emptyset\neq A\subset\{1,\dots,n\}$, the $\Fq$-vector space morphism $$\pi_A:\Fq^k\times\Fq^n\rightarrow \Fq^{|A|}$$ is defined by the projection  $$(s_1,\dots,s_k,c_1,\dots,c_n)\mapsto (c_i)_{i\in A}.$$
For ${\bf v}\in \Fq^k\times\Fq^n$, it is sometimes convenient to denote $\pi_0({\bf v})\in \Fq^k$ by ${\bf v}_0$  and $\pi_A({\bf v})\in \Fq^{|A|}$ by ${\bf v}_A$. We write ${\mathcal I}^*=\{1, \ldots, n\}$.
It is also sometimes convenient to refer to ${\bf v}_0$ as the {\em secret-component} of ${\bf v}$ and to
${\bf v}_{{\mathcal I}^*}$ as its {\em shares-component}.
\end{definition}

\begin{definition} 
 An {\em $n$-code for $\Fq^k$ ({\em over} $\Fq$)} is an $\Fq$-vector space $C\subset\Fq^k\times\Fq^n$ such that
\begin{itemize}
\item [{\rm (i)}] $\pi_0(C)=\Fq^k$
\item [{\rm (ii)}] $(\mathrm{Ker\ }\pi_{\mI^{*}})\cap C \ \subset \  (\mathrm{Ker\ }\pi_0)\cap C$.
\end{itemize}
For ${\bf c}\in C$, ${\bf c}_0\in \Fq^k$ is the {\em secret} and ${\bf c}_{{\mathcal I}^*}\in \Fq^n$ the {\em shares}.
\end{definition}

The first condition means that, in $C$, the  secret can take any value in $\Fq^k$.
More precisely, for a uniformly random vector ${\bf c}\in C$,
the secret ${\bf c}_0$ is uniformly random in $\Fq^k$. This follows from the fact that the projection $(\pi_0)_{|C}$ is regular (since it is a surjective $\Fq$-vector space morphism).\\
The second condition means that the shares uniquely determine the secret.
Indeed, the shares do not always determine the secret uniquely if and only if there are ${\bf c}, {\bf c}'\in C$ such that their shares coincide but not their secrets.
Therefore, by linearity, the shares determine the secret uniquely if and only if the shares being zero implies the secret being zero. Moreover these two conditions imply that $k\leq n$.\\
Note that an $n$-code with the stronger condition $(\mathrm{Ker\ }\pi_{\mI^{*}})\cap C= (\mathrm{Ker\ }\pi_0)\cap C$ is a $k$-dimensional error correcting code of length $n$.

\begin{definition}[$r$-reconstructing] 
An $n$-code $C$ for $\Fq^k$ is {\em $r$-reconstructing} \ ($1\leq r\leq n$) if
$$
(\mathrm{Ker\ }\pi_A)\cap C \subset (\mathrm{Ker\ }\pi_0)\cap C
$$
for each
$A\subset \mI^{*}$ with $|A|=r$.
\end{definition}

In other words, $r$-reconstructing means that any $r$ shares
uniquely determine the secret. Note that an $n$-code is $n$-reconstructing by definition.

\begin{definition}[$t$-Disconnected] 
An $n$-code $C$ for $\Fq^k$ is {\em $t$-disconnected} if $t=0$ or else if $1\leq t<n$,
the projection
$$
\pi_{0,A}: C \longrightarrow \Fq^k \times \pi_A(C)$$
$$
\bc \mapsto (\pi_0(\bc), \pi_A(\bc))
$$
is surjective for each $A\subset\mI^{*}$ with $|A|=t$.\\
If, additionally, $\pi_A(C)=\Fq^t$,
we say $C$ is {\em $t$-uniform}.
\end{definition}

If $t>0$, then $t$-disconnectedness means the following. Let $A\subset {\mathcal I}^*$ with $|A|=t$.
Then, for  uniformly randomly  ${\bf c}\in C$,
the secret ${\bf c}_0$ is independently distributed from the $t$  shares ${\bf c}_A$. Indeed, for the same reason that the secret ${\bf c}_0$ is uniformly random in $\Fq^k$, it holds that  $({\bf c}_0, {\bf c}_A)$ is uniformly random in $\Fq^k\times \pi_A(C)$. Since the uniform distribution on the Cartesian-product of two finite sets corresponds to the uniform distribution on one set, and independently, the uniform distribution on the other, the claim follows. Uniformity means that, in addition, ${\bf c}_A$ is uniformly random in $\Fq^t$.

\begin{definition} [Powers of an $n$-Code] \label{def:powers} Let $m\in \ZZ_{>0}$.
For ${\bf x}, {\bf x}'\in \Fq^m$, their {\em product} ${\bf x}*{\bf x}'\in \Fq^m$ is defined as
$(x_1x'_1, \ldots, x_mx'_m)$.\\
 Let $d$ be a positive integer. If $C$ is an $n$-code for $\Fq^k$, then
$C^{*d}\subset \Fq^k\times \Fq^n$ is the $\Fq$-linear subspace generated by all terms of the form
${\bf c}^{(1)}*\ldots *{\bf c}^{(d)}$ with ${\bf c}^{(1)}, \ldots,  {\bf c}^{(d)}\in C$.
For $d=2$, we use the abbreviation $\widehat{C}:=C^{*2}$.
Powers of linear codes (instead of $n$-codes) are defined analogously and will be useful later.
\end{definition}

\begin{remark}[Powering Need Not Preserve $n$-Code]
Suppose $C\subset \Fq^k\times \Fq^n$ is an $n$-code for $\Fq^k$. It follows immediately that the secret-component in $C^{*d}$ takes any value in $\Fq^k$. {\emph However},
the shares-component in $C^{*d}$ {\em need not} determine the secret-component uniquely.
Thus, {\em $C^{*d}$ need not be an $n$-code for $\Fq^k$}.
\end{remark}

\begin{definition}[Arithmetic secret sharing scheme] 
An {\em $(n, t, d,r)$-arithmetic secret sharing scheme for $\Fq^k$ (over $\Fq$)}
is an $n$-code
$C$ for $\Fq^k$
such that
\begin{itemize}
\item [{\rm (i)}] $t\geq 1$, $d\geq 2$
\item [{\rm (ii)}] $C$  is $t$-disconnected,
\item [{\rm (iii)}] ${C}^{*d}$ is in fact an $n$-code for $\Fq^k$ and
\item [{\rm (iv)}] ${C}^{*d}$ is
$r$-reconstructing.
\end{itemize}
$C$ has {\em uniformity} if, in addition, it is $t$-uniform.
\end{definition}

For example, the case $k=1$, $d=2$, $n=3t+1$, $r=n-t$, $q>n$ obtained from Shamir's secret sharing scheme~\cite{Sha79-A} (taking into account that degrees sum up when taking products
 of polynomials) corresponds to the secret sharing scheme used in~\cite{BGW88,CCD88}. The properties are easily proved using Lagrange's Interpolation Theorem. The
 generalization to $k>1$ of this Shamir-based approach is due to~\cite{FY92}. The abstract notion is due to~\cite{CDM00}, where also constructions for $d=2$ were
 given based on general linear secret sharing. See also \cite{CC06,CCGHV07,CCHP08}. On the other hand the following limitations are easy to establish.
\begin{proposition}
Let $C$ be an $(n,t,d,r)$-arithmetic secret sharing scheme for $\Fq^k$ over $\Fq$.
As a linear secret sharing scheme for $\Fq^k$ over $\Fq$, $C$ has $t$-privacy and $(r-(d-1)t)$-reconstruction. Hence, $dt+k\leq r$. Particularly, if $k=1$, $d=2$, $r=n-t$, then
$3t+1\leq n$.
\end{proposition}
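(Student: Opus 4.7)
The plan is to establish the two structural claims (privacy and reconstruction) by directly unpacking the definitions, and then to derive the inequality $dt+k\leq r$ from a general dimension argument valid for any linear secret sharing scheme. The special case $3t+1\leq n$ is then just substitution.

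First I would observe that the $t$-privacy claim is essentially a restatement of $t$-disconnectedness: the condition that $\pi_{0,A}\colon C\to \Fq^k\times\pi_A(C)$ is surjective for every $A\subset\mI^*$ with $|A|=t$ means precisely that the uniform distribution on $C$ yields jointly independent $(\bc_0,\bc_A)$, which is the standard meaning of $t$-privacy.

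The main content is the reconstruction claim. Fix $A\subset\mI^*$ with $|A|=r-(d-1)t$ and take any $\bc\in C$ with $\bc_A=\bo$; the goal is to show $\bc_0=\bo$. Since $|A|+(d-1)t=r\leq n$, choose pairwise disjoint $t$-subsets $A_2,\dots,A_d\subset\mI^*\setminus A$ and set $A'=A\cup A_2\cup\dots\cup A_d$, so $|A'|=r$. By $t$-disconnectedness, for each $j=2,\dots,d$ and each prescribed secret ${\bf e}_j\in\Fq^k$ there exists $\bc^{(j)}\in C$ with $\bc^{(j)}_{A_j}=\bo$ and $\bc^{(j)}_0={\bf e}_j$ (use surjectivity of $\pi_{0,A_j}$ and the fact that $\bo\in\pi_{A_j}(C)$). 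The product
\[
\bc*\bc^{(2)}*\cdots*\bc^{(d)}\in C^{*d}
\]
has shares vanishing on $A$ (because $\bc_A=\bo$) and on each $A_j$ (because $\bc^{(j)}_{A_j}=\bo$), hence on all of $A'$. Since $C^{*d}$ is $r$-reconstructing, its secret must vanish, i.e.\ $\bc_0*{\bf e}_2*\cdots*{\bf e}_d=\bo$. Choosing each ${\bf e}_j=(1,\dots,1)$ forces $\bc_0=\bo$, proving $(r-(d-1)t)$-reconstruction.

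For the inequality, I would use the standard linear-algebraic gap argument. Write $K_A=(\ker\pi_A)\cap C$, $K_0=(\ker\pi_0)\cap C$, and let $A\subset B\subset\mI^*$ with $|A|=t$, $|B|=r-(d-1)t$. From $t$-disconnectedness, $\dim C=k+\dim\pi_A(C)+\dim(K_0\cap K_A)$, while $\dim C=\dim\pi_A(C)+\dim K_A$; subtracting gives $\dim K_A-\dim(K_0\cap K_A)=k$. From the reconstruction just proved, $K_B\subset K_0\cap K_A$, so
\[
k=\dim K_A-\dim(K_0\cap K_A)\leq \dim K_A-\dim K_B=\dim\pi_B(C)-\dim\pi_A(C)\leq |B|-|A|,
\]
that is, $k\leq (r-(d-1)t)-t=r-dt$, which rearranges to $dt+k\leq r$. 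Substituting $k=1$, $d=2$, $r=n-t$ yields $2t+1\leq n-t$, i.e.\ $3t+1\leq n$.

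The only mild subtlety is the choice of disjoint $A_j$'s, which requires $n\geq r$; this is already guaranteed by $r\leq n$ in the definition of an $n$-code. Everything else is just bookkeeping with the dimension identities for surjective projections.
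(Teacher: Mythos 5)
Your proof is correct in its essential content, and since the paper offers no proof for this proposition (it simply asserts the bounds are ``easy to establish''), there is no authorial argument to compare against; your approach --- products of words with carefully vanishing share-patterns, followed by a rank--nullity gap argument --- is the standard one for strong-multiplication-type statements.

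There is one tacit assumption that you should make explicit: in the dimension argument you posit ``$A\subset B$ with $|A|=t$, $|B|=r-(d-1)t$'', which silently presupposes $t\le r-(d-1)t$. That inequality is not given a priori; it is in fact part of what is to be proved (it is $dt\le r$, one term shy of $dt+k\le r$). The inequality $\dim\pi_B(C)-\dim\pi_A(C)\le |B|-|A|$ that you invoke genuinely requires $A\subset B$, so without this justification the argument is circular-looking. The gap is easy to close: suppose toward a contradiction that $t\ge r-(d-1)t$, and choose $B\subset A$ with $|B|=r-(d-1)t$, $|A|=t$. Then $K_A\subset K_B$, and by the $(r-(d-1)t)$-reconstruction you just proved, $K_B\subset K_0$; hence $K_0\cap K_A=K_A$, so the identity $\dim K_A-\dim(K_0\cap K_A)=k$ coming from $t$-disconnectedness forces $k=0$, contradicting $k\ge 1$. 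Once $t<r-(d-1)t$ is secured, the rest of your dimension-count runs exactly as written and yields $dt+k\le r$; the specialization to $k=1$, $d=2$, $r=n-t$ is immediate.
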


We are now ready to state the asymptotical results from~\cite{CC06} in full generality.\footnote{In fact, we state a version that is proved by exactly the same arguments as in~\cite{CC06}.}
 Let $ F/\Fq$ be an algebraic function field (in one variable, with $\Fq$ as field of constants). Let $g$ denote the genus of $F$. Let $k,t,n\in \ZZ$ with $n>1$,
$1\leq t\leq n$, $1\leq k\leq n$. Suppose
 $Q_1,\dots,Q_k,P_1,\dots,P_n\in \PP^{(1)}(F)$ are {\em pairwise distinct}
$\Fq$-rational places. Write $Q=\sum_{j=1}^k Q_j\in
\divi{}$ and $D= Q+ \sum_{i=1}^n P_i\in
\divi{}$. Let $G\in \divi{}$ be such that $\supp~D\cap\supp~G=\emptyset$, i.e, they have disjoint support.
 Consider the AG-code $C(D,G)_L\subset \FF_q^k\times\FF_q^n$,
given by the image of the map
$$\phi:\mathcal L(G)\rightarrow \FF_q^k\times\FF_q^n$$
$$f\mapsto (f(Q_1), \ldots, f(Q_k), f(P_1), \ldots, f(P_n)).$$

\begin{theorem} (from \cite{CC06}).
 Let $t\geq 1, d\geq 2$. Let $C=C(D,G)_L$ with $deg\ G\geq 2g+t+k-1$. If $n>2dg+(d+1)t+dk-d$, then $C$ is an $(n,t,d,n-t)$-arithmetic sharing scheme for $\Fq^k$ over $\Fq$ with uniformity.
\end{theorem}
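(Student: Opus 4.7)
The plan is to verify each defining property of an arithmetic secret sharing scheme (plus uniformity) by classical Riemann-Roch evaluation arguments on $G$ (choosing $\deg G=2g+t+k-1$, the minimum value permitted by the hypothesis), together with the standard inclusion $C^{*d}\subset C(D,dG)_L$, which reduces the analysis of $C^{*d}$ to the same kind of argument applied to the divisor $dG$.

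The single computation at the heart of the proof concerns, for any $B\subset\mI^{*}$ with $|B|\le t$, the evaluation map
$$\phi_B:\mL(G)\longrightarrow \FF_q^{k+|B|},\qquad f\longmapsto \bigl((f(Q_j))_{j=1}^{k},(f(P_i))_{i\in B}\bigr),$$
whose kernel is $\mL(G-Q-\sum_{i\in B}P_i)$. Since $\deg(G-Q-\sum_{i\in B}P_i)=\deg G-k-|B|\ge 2g-1$, Riemann-Roch places both $\ell(G)=\deg G+1-g$ and $\ell(G-Q-\sum_{i\in B}P_i)=\deg G-k-|B|+1-g$ in the non-special range, so their difference $k+|B|$ shows that $\phi_B$ is surjective. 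Specialising to $|B|=0$ yields $\pi_0(C)=\FF_q^k$, and letting $B$ range over all subsets of size $t$ yields $t$-uniformity (and hence $t$-disconnectedness).

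To see that $C$ itself is an $n$-code, suppose $f\in\mL(G)$ vanishes at every $P_i$. Then $f\in\mL(G-\sum_i P_i)$, and the divisor $G-\sum_i P_i$ has negative degree: indeed $n>2dg+(d+1)t+dk-d\ge 2g+t+k-1=\deg G$ for $d\ge 2$. Hence $f=0$ and the secret component also vanishes. For $C^{*d}$, the inclusion $C^{*d}\subset C(D,dG)_L$ lets me run the same degree argument on $dG$: for any $A\subset\mI^{*}$ with $|A|=n-t$ and any $h\in\mL(dG)$ with $h(P_i)=0$ for $i\in A$, the function $h$ lies in $\mL(dG-\sum_{i\in A}P_i)$. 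The underlying divisor has degree $d\deg G-(n-t)=d(2g+t+k-1)-(n-t)$, which is strictly negative precisely because of the assumption $n>2dg+(d+1)t+dk-d$. Thus $h=0$, proving $(n-t)$-reconstruction for $C(D,dG)_L$ and, since reconstruction is inherited by subcodes, also for $C^{*d}$; specialising $A=\mI^{*}$ gives the kernel condition that completes the $n$-code property for $C^{*d}$.

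The only step that requires slight care, and the main obstacle if one can call it that, is secret-surjectivity $\pi_0(C^{*d})=\FF_q^k$, since taking powers does not automatically preserve this property. To handle it, I would first produce, using $\pi_0(C)=\FF_q^k$, some $u\in\mL(G)$ with $u(Q_j)=1$ for every $j$. Then for any $f\in\mL(G)$ the product $f\cdot u^{d-1}\in\mL(dG)$ still evaluates to $f(Q_j)$ at each $Q_j$, and its image under the evaluation map at the full support of $D$ belongs to $C^{*d}$. Combined with the surjectivity of $f\mapsto(f(Q_j))_j$ on $\mL(G)$, this gives $\pi_0(C^{*d})=\FF_q^k$, completing the verification of all the arithmetic secret sharing scheme axioms together with uniformity.
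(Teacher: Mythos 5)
Your proof is correct and follows the standard Riemann--Roch degree-counting approach used in \cite{CC06} (the paper itself only cites that proof). Two small remarks. First, you are right to pin $\deg G$ to $2g+t+k-1$: the displayed hypothesis $n>2dg+(d+1)t+dk-d$ is exactly $n>d\deg G + t$ at that minimal degree, and the reconstruction argument (requiring $\deg(dG-\sum_{i\in A}P_i)<0$ for $|A|=n-t$) would fail for larger $\deg G$, so the ``$\ge$'' in the statement must be read as fixing that degree. Second, the step you single out as requiring care, $\pi_0(C^{*d})=\Fq^k$, is in fact automatic and does not need the auxiliary function $u$: since $\pi_0$ commutes with coordinatewise products, $\pi_0(C^{*d})$ is the span of all $d$-fold products of elements of $\pi_0(C)=\Fq^k$, which contains every standard basis vector $e_j=e_j*\cdots*e_j$; the paper's Remark ``Powering Need Not Preserve $n$-Code'' explicitly notes that surjectivity of the secret component is immediate and that only the kernel condition can fail. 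Your workaround is harmless but redundant; the rest of the argument is exactly right.
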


\begin{theorem} (from \cite{CC06}).
 Fix  $d\geq 2$  and  a finite field $\Fq$.
 Suppose $A(q)> 2d$, where $A(q)$ is Ihara's constant.
Then there is an infinite family of $(n,t,d,n-t)$-arithmetic secret sharing schemes for $\Fq^k$ over $\Fq$ with uniformity such that
$n$ is unbounded, $k=\Omega(n)$ and $t=\Omega(n)$. Moreover, for every $C$ in the family, a generator for $C$ is
$\mbox{poly}(n)$-time computable and $C^{*i}$ has $\mbox{poly}(n)$-time reconstruction of a secret in the presence of $t$ faulty shares ($i=1,\dots,d-1$).
\end{theorem}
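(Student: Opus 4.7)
The plan is to instantiate the preceding theorem along a family of function fields whose Ihara limit exceeds $2d$, with all parameters scaling linearly in the genus. Since $A(q)>2d$, pick a family $\mathcal F=\{F_m/\Fq\}$ with $g_m:=g(F_m)\to\infty$ and $\liminf N(F_m)/g_m=a$ for some $a\in(2d,A(q)]$. For the later efficiency claims it is convenient to take an \emph{explicit} tower (for instance, a Garcia-Stichtenoth tower such as the one in Theorem~\ref{thm:GSINV}, when applicable) for which Riemann-Roch bases and AG decoders are known to be computable in polynomial time.

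Next I would set up the parameters. Pick positive constants $c_1,c_2,c_3$ satisfying $c_1+c_2<a$ together with $c_1>2d+dc_2+(d+1)c_3$; this system is feasible precisely when $(d+1)(c_2+c_3)<a-2d$, which has solutions because $a>2d$. Defining $k_m:=\lfloor c_2 g_m\rfloor$, $t_m:=\lfloor c_3 g_m\rfloor$, $n_m:=\lfloor c_1 g_m\rfloor$, for every sufficiently large $m$ one has $k_m+n_m\leq N(F_m)$ and the key inequality $n_m>2dg_m+(d+1)t_m+dk_m-d$. Choose $k_m+n_m$ pairwise distinct rational places to serve as $Q_1,\dots,Q_{k_m},P_1,\dots,P_{n_m}$, and pick any divisor $G$ of degree $2g_m+t_m+k_m-1$ whose support avoids $\supp D$ (existence follows from a standard weak approximation / Riemann-Roch argument, since $\deg G\geq 2g_m$). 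The previous theorem then yields that $C=C(D,G)_L$ is an $(n_m,t_m,d,n_m-t_m)$-arithmetic secret sharing scheme for $\Fq^{k_m}$ with uniformity, and by construction $k_m,t_m=\Omega(n_m)$ and $n_m\to\infty$.

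For the algorithmic part, a generator matrix of $C$ is produced by computing a basis of $\mathcal L(G)$ (via Hess's algorithm in general, or by explicit formulas in the chosen tower) and evaluating it at the $n_m+k_m$ rational places; this runs in time polynomial in $g_m$ and $\deg G$, hence in $n_m$. For reconstruction of the secret of $C^{*i}$ from $n_m$ shares among which at most $t_m$ are faulty, observe that $C^{*i}\subseteq C(D,iG)_L$, so the shares-component minimum distance is at least $n_m-i\deg G$. A short verification (using $g_m,k_m\geq 1$) shows that our parameter inequality $n_m>2dg_m+(d+1)t_m+dk_m-d$ forces $n_m-i\deg G\geq 2t_m+1$ for every $i\in\{1,\dots,d-1\}$, so a standard Berlekamp-Welch style AG decoder applied to $C(D,iG)_L$ recovers the codeword, and in particular the secret, in polynomial time. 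The main obstacle, in my view, is not the combinatorial parameter selection---which is essentially a linear feasibility question in the regime $A(q)>2d$---but rather the algorithmic half: one must either rely on a general polynomial-time Riemann-Roch algorithm and an AG decoder, or commit to a concretely described tower for which these constructions are explicitly available.
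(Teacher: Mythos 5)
Your proposal is correct and follows the standard route: it instantiates the preceding theorem (the $\deg G\geq 2g+t+k-1$, $n>2dg+(d+1)t+dk-d$ criterion from \cite{CC06}) along a family attaining an Ihara limit $a>2d$, with all of $k,t,n$ set to linear fractions of the genus, and it handles the algorithmic clause via known polynomial-time Riemann--Roch basis computation and AG decoding up to half the designed distance. This is precisely the argument attributed to \cite{CC06} that the paper cites without reproof, so there is nothing to compare beyond noting your parameter-feasibility calculation and the distance bound $n_m-i\deg G\geq 2t_m+1$ are both sound; the only cosmetic quibble is that the existence of $G$ avoiding $\supp D$ comes from approximation alone and does not actually need $\deg G\geq 2g_m$.
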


Since $A(q)=\sqrt{q}-1$ if $q$ is a square, it holds that $A(q)>2d$ if $q$ is a square with $q>(2d+1)^2$. Also, since by Serre's Theorem, $A(q)>c \log q$ for some absolute constant $c>0$, it also holds that $A(q)>2d$ if $q$ is (very) large. %~\footnote{The best known estimate for $c$ is currently about $\frac{1}{96}$.}
We will now apply our results on the torsion limit in combination with appropriate Riemann-Roch systems in order to relax the condition $A(q)>2d$ considerably. As a result, we attain the result of~\cite{CC06} but this time over {\em nearly all finite fields}.

\begin{theorem}\label{th:genss}
Let $t\geq 1, d\geq 2$. Define  ${\mathcal I}^*=\{1, \ldots, n\}$. For $A\subset {\mathcal I}^*$ with
$A\neq \emptyset$, define $P_A=\sum_{j\in A}P_j\in \divi{}$. Let $K\in
\divi{}$ be a canonical divisor.
 If the system
$$\{  \ell(dX-D+P_A+Q)=0,\ \ell(K-X+P_A+Q)=0\}_ {A\subset{\mathcal I}^*, |A|=t}$$
is solvable, then there is a solution $G\in\divi{}$ such that $C(D,G)_L$ is an  $(n,t,d,n-t)$-arithmetic secret sharing
scheme for $\Fq^k$ over $\Fq$ (with uniformity).
\end{theorem}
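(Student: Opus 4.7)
The plan is to take any class $[G] \in \Cl(F)$ satisfying the system, pick a representative $G \in \Div(F)$ whose support is disjoint from $\supp D$ (standard moving argument within the class), and verify directly that $C := C(D,G)_L$ satisfies all the clauses in the definition of an $(n,t,d,n-t)$-arithmetic secret sharing scheme with uniformity. First I would rewrite the two families in a cleaner form: using $D = Q + \sum_i P_i$, the first family becomes $\ell(dG - P_B) = 0$ for every $B \subseteq \mI^*$ with $|B| = n-t$ (by setting $B = \mI^* \setminus A$), while the second remains $\ell(K - G + Q + P_A) = 0$ for every $A \subseteq \mI^*$ with $|A| = t$.

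The second family is the source of $t$-uniformity. For any such $A$, consider the partial evaluation $\mL(G) \to \Fq^k \times \Fq^t$ sending $f \mapsto ((f(Q_j))_j, (f(P_i))_{i \in A})$; its kernel is $\mL(G - Q - P_A)$. Since $\mL(K - G) \subseteq \mL(K - G + Q + P_A)$, the hypothesis forces both $\ell(K - G) = 0$ and $\ell(K - G + Q + P_A) = 0$, and Riemann-Roch then yields $\ell(G) - \ell(G - Q - P_A) = k + t$. Thus this evaluation map surjects onto $\Fq^k \times \Fq^t$, which is exactly $t$-uniformity; projecting to the first factor also gives $\pi_0(C) = \Fq^k$.

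The first family is the source of $(n-t)$-reconstruction for $C^{*d}$. Because evaluation commutes with pointwise products, every element of $C^{*d}$ lies in $C(D, dG)_L$, i.e.\ it is of the form $\phi_d(f)$ for some $f \in \mL(dG)$, where $\phi_d$ is the analogous evaluation map on $\mL(dG)$. If such a $c = \phi_d(f)$ has zero coordinates on some $B$ with $|B|=n-t$, then $f \in \mL(dG - P_B) = \{0\}$, so $c = 0$ and in particular its secret vanishes.

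Finally I would assemble the remaining $n$-code axioms. For $C^{*d}$: picking $c^{(j)} \in C$ with secret $e_j$ (available by surjectivity of $\pi_0$), the element $(c^{(j)})^{*d} \in C^{*d}$ has secret $(e_j)^{*d} = e_j$, so by linearity $\pi_0(C^{*d}) = \Fq^k$, while its $n$-reconstruction follows from the stronger $(n-t)$-reconstruction. For $C$ itself, $n$-reconstruction is bootstrapped from that of $C^{*d}$: any $c \in C$ with zero shares yields $c^{*d} \in C^{*d}$ with zero shares, forcing $(c_0)^{*d} = 0$ and hence $c_0 = 0$. The main technical point is the Riemann-Roch translation of the two families into the respective surjectivity/injectivity statements for evaluation maps; once this is in place everything else is routine linear algebra, and the only genuine obstacle, which is postponed to the applications, is producing solutions of the system itself --- this is where the bounds from Theorem~\ref{2.2} combine with Theorem~\ref{thm:system}.
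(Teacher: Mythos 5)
Your proof is correct and follows the same route one would expect from the paper (the paper itself defers the details to the conference version~\cite{CCX11}, but the parallel, fully proved Theorem~\ref{thm:rrmult} uses exactly this pattern: move $G$ within its class by weak approximation so $\supp G \cap \supp D = \emptyset$, translate each equation of the Riemann--Roch system into a statement about an evaluation map via $\ell(G)-\ell(G-E)=\deg E$ when $\ell(K-G+E)=0$, and finish with linear algebra). The rewriting $dX - D + P_A + Q = dX - P_B$ with $B=\mI^*\setminus A$, the Riemann--Roch computation giving $\ell(G)-\ell(G-Q-P_A)=k+t$ (hence $t$-uniformity and $\pi_0(C)=\Fq^k$), the inclusion $C^{*d}\subseteq C(D,dG)_L$ giving $(n-t)$-reconstruction from $\ell(dG-P_B)=0$, the surjectivity of $\pi_0$ on $C^{*d}$ via $e_j^{*d}=e_j$, and the bootstrap $(c_0)^{*d}=0\Rightarrow c_0=0$ using that $\Fq$ is a field are all sound and complete; nothing is missing.
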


The reader may refer \cite{CCX11} for a detailed proof of \ref{th:genss}.

And now as a corollary of Theorems~\ref{thm:system} and~\ref{th:genss} we
get the following:

\begin{corollary}\label{cor:genss2}
Let $F/\Fq$ be an algebraic function field. Let $d,k,t,n\in \ZZ$ with $d\geq 2$, $n>1$
and $1\leq t< n$. Suppose
$Q_1,\dots,Q_k,P_1,\dots,P_n\in \PP^{(1)}(F)$ are pairwise distinct. If there is $s\in \ZZ$
such that
 $$h>\binom{n}{t}(A_{r_1}+A_{r_2}|\mJ_F[d]|)$$ where $r_1:=2g-s+t+k-2$ and
$r_2:=ds-n+t$, then there exists an $(n,t,d,n-t)$-arithmetic secret sharing
scheme for $\Fq^k$ over $\Fq$ with uniformity.

\end{corollary}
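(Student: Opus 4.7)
The plan is a direct composition of Theorem~\ref{th:genss} and Theorem~\ref{thm:system}. Theorem~\ref{th:genss} already reduces the task of producing an $(n,t,d,n-t)$-arithmetic secret sharing scheme for $\Fq^k$ over $\Fq$ with uniformity to exhibiting a solution of the Riemann--Roch system
$$\{\ell(dX-D+P_A+Q)=0,\ \ell(K-X+P_A+Q)=0\}_{A\subset {\mathcal I}^*,\ |A|=t}.$$
So my only task is to show that the assumed numerical inequality $h>\binom{n}{t}(A_{r_1}+A_{r_2}|\mJ_F[d]|)$ is precisely the specialization of the sufficient condition supplied by Theorem~\ref{thm:system} when one searches for a solution of degree $s$.

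First, I would read off the parameters $(m_i,Y_i)$ of each equation in the notation of Theorem~\ref{thm:system}. For each $A\subset{\mathcal I}^*$ of cardinality $t$, the first equation contributes $m=d$ and $Y=P_A+Q-D$ with $\deg Y=t-n$, while the second contributes $m=-1$ and $Y=K+P_A+Q$ with $\deg Y=2g-2+t+k$. Setting $\deg X=s$, the quantities $r_i=m_i s+\deg Y_i$ of Theorem~\ref{thm:system} evaluate to $ds-n+t=r_2$ for each of the $\binom{n}{t}$ equations of the first type and to $2g-s+t+k-2=r_1$ for each of the $\binom{n}{t}$ equations of the second type, independently of~$A$.

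Second, I would observe that the torsion factor $|\mJ_F[m_i]|$ in Theorem~\ref{thm:system} equals $|\mJ_F[d]|$ for the first-type equations, while for the second-type equations it equals $|\mJ_F[-1]|=1$, since multiplication by $-1$ on the Jacobian is an automorphism with trivial kernel (equivalently, $-[D]=0$ forces $[D]=0$). Adding up all $2\binom{n}{t}$ contributions therefore collapses the bound in Theorem~\ref{thm:system} to $\binom{n}{t}\bigl(A_{r_2}|\mJ_F[d]|+A_{r_1}\bigr)$, which is exactly the quantity strictly bounded above by $h$ in the hypothesis. Theorem~\ref{thm:system} then yields a class $[G]\in\mbox{Cl}_s(F)$ solving the system, and Theorem~\ref{th:genss} promotes such a class to a divisor $G$ (chosen so that $\supp G\cap\supp D=\emptyset$) for which $C(D,G)_L$ is the desired arithmetic secret sharing scheme.

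There is no serious obstacle: the argument is a bookkeeping exercise. The only mild subtlety is the interpretation of the $m_i=-1$ equations, whose contribution to the torsion sum is trivial and hence produces the unweighted term $A_{r_1}$ in the corollary's inequality, in contrast with the $m_i=d$ equations, which are exactly where the torsion limit studied in Section~\ref{sec:tl} will enter when the corollary is instantiated asymptotically.
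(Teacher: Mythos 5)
Your proposal is correct and is exactly the paper's intended argument: the paper states Corollary~\ref{cor:genss2} as a direct consequence of Theorems~\ref{thm:system} and~\ref{th:genss} without spelling out the bookkeeping, and your computation of the $r_i$ (using $\deg D = n+k$, $\deg Q = k$, $\deg P_A = t$, $\deg K = 2g-2$) and your observation that $|\mJ_F[-1]|=1$ because negation is an automorphism of $\mJ_F$ fill in precisely the omitted details.
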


\begin{theorem} \label{mth:newresulttor}
Let $\Fq$ be a finite field and $d\in\ZZ_{\geq 2}$. If there exists $0< A\leq A(q)$ such that
$A>d-1+J_d(q,A)$, then there is an infinite family of $(n,t,d,n-t)$-arithmetic secret sharing
 schemes for $\Fq^k$ over $\Fq$
 with $t$-uniformity where $n$ is unbounded, $k=\Omega(n)$ and $t=\Omega(n)$.
\end{theorem}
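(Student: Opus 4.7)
The plan is to apply Corollary~\ref{cor:genss2} along a carefully chosen family of function fields. Let $\delta := A - (d-1) - J_d(q,A) > 0$ be the slack afforded by the hypothesis. By the definition of $J_d(q,A)$ as an infimum, for each small $\epsilon > 0$ there exists a family $\mF$ over $\Fq$ (with genera $g \to \infty$) such that $A(\mF) \geq A$ and $J_d(\mF) \leq J_d(q,A) + \epsilon$. Since the Ihara limit of $\mF$ is an honest limit and the torsion limit is a liminf, passing to a cofinal subfamily I may assume every $F \in \mF$ simultaneously satisfies $N(F) \geq (A-\epsilon)g$ and $|\mJ_F[d]| \leq q^{(J_d(q,A)+\epsilon)g}$, where $g := g(F)$.

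For each such $F$, I would fix strictly positive rates $\kappa, \tau, \nu$ (each a small fraction of $\delta$) and set
\[ n := \lfloor (A-\epsilon)g\rfloor, \quad k := \lfloor \kappa g\rfloor, \quad t := \lfloor \tau g\rfloor, \quad s := \lceil (1+\tau+\kappa+\nu)g\rceil.\]
Since $n+k \leq N(F)$ for large $g$ whenever $\kappa$ is small, one can pick $n+k$ pairwise distinct rational places to play the role of the $P_i$ and $Q_j$. A direct computation gives
\[ r_1 = 2g - s + t + k - 2 = (1-\nu)g + O(1), \qquad r_2 = ds - n + t = \bigl(d - A + \epsilon + \tau(d+1) + d\kappa + d\nu\bigr)g + O(1),\]
both of which lie in $[0, g-1]$ for large $g$ and small parameters.

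Next I would invoke Proposition~\ref{propo:Arh} together with the torsion estimate to obtain
\[ \frac{A_{r_1}}{h} \leq q^{-\nu g + O(\log g)}, \qquad \frac{A_{r_2}}{h} \cdot |\mJ_F[d]| \leq q^{-\mu g + O(\log g)},\]
where $\mu := \delta - 2\epsilon - \tau(d+1) - d\kappa - d\nu > 0$ by the choice of parameters. Combined with the binary-entropy bound $\binom{n}{t} \leq 2^{n H(\tau/(A-\epsilon))}$ (with $H$ the binary entropy function), whose logarithmic rate $(1/g)\log_q\binom{n}{t}$ can be made arbitrarily small by additionally requiring $\tau/(A-\epsilon)$ to be tiny, this yields
\[ \binom{n}{t}\bigl(A_{r_1} + A_{r_2}|\mJ_F[d]|\bigr) < h\]
for all sufficiently large $g$. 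Corollary~\ref{cor:genss2} then produces an $(n,t,d,n-t)$-arithmetic secret sharing scheme for $\Fq^k$ with uniformity. Letting $F$ range over the infinite cofinal subfamily delivers infinitely many such schemes with $n$ unbounded and $k, t = \Omega(n)$.

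The main obstacle is juggling the five small positive parameters $\epsilon, \nu, \kappa, \tau$, and the auxiliary ratio $\tau/(A-\epsilon)$ hidden in the entropy term, so that three constraints hold together: $r_1$ sits strictly below $g-1$, the quantity $r_2 + J_d(q,A)g$ sits strictly below $g$, and the binomial factor consumes less than $\min(\nu,\mu)$. Keeping $\kappa$ and $\tau$ strictly positive (which is what guarantees $k, t = \Omega(n)$) while leaving room for all the other perturbations is possible precisely because the hypothesis provides a strictly positive slack $\delta$; removing that slack would collapse the argument.
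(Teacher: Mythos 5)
Your proposal is correct in its essentials and follows the same route as the paper: combine Corollary~\ref{cor:genss2} with Proposition~\ref{propo:Arh}, choose a family realizing Ihara limit $\geq A$ and torsion limit close to $J_d(q,A)$, and tune the rates $\kappa,\tau,\nu$ together with the degree parameter $s$ so that both exponential terms on the right-hand side of the counting inequality are dominated by $h$. (The paper relegates the explicit computation to~\cite{CCX11} and writes out Theorem~\ref{thm:newresultexplicit} only for $d=2$, so you are actually filling in the general-$d$ bookkeeping that the paper omits.) Two small constants need attention but do not threaten the argument. First, with $n=\lfloor(A-\epsilon)g\rfloor$ and $k=\lfloor\kappa g\rfloor$, you may have $n+k>N(F)$ when $N(F)$ is only slightly above $(A-\epsilon)g$; you should instead take $n=\lfloor(A-\epsilon-\kappa)g\rfloor$ (or budget $N(F)\geq(A-\epsilon)g$ against $n+k$ directly), which changes the $r_2$ computation by the harmless additive $\kappa$. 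Second, since $J_d(\mF)$ is itself a liminf, the cofinal subfamily you extract gives $\log_q|\mJ_F[d]|\leq(J_d(q,A)+2\epsilon)g$ rather than $(J_d(q,A)+\epsilon)g$; again this only renames constants. With those adjustments the inequality $\binom{n}{t}\bigl(A_{r_1}+A_{r_2}|\mJ_F[d]|\bigr)<h$ follows for all sufficiently large $g$ exactly as you describe, and $k,t=\Omega(n)$ holds because $\kappa,\tau>0$ are fixed while $n=\Theta(g)$.
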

\begin{remark}
Note that in~\cite[Main Theorem 1]{CCX11} this result was announced but only proved in the case $d=2$. However, 
the general condition is incorrectly written there as $A>1+J_d(q,A)$ instead of $A>d-1+J_d(q,A)$. Note that in the case
 $d=2$ (which is the main concern of~\cite{CCX11}) both expressions
coincide.
\end{remark}

Theorem~\ref{mth:newresulttor} will follow from the more precise statement in Theorem~\ref{thm:newresultexplicit} below.
Combining Theorem~\ref{mth:newresulttor} with Theorem~\ref{2.4a} we obtain,
in the special case $d=2$:

\begin{theorem}\label{mth:newresult}
For $q=8,9$ and for all prime powers $q\ge 16$ there is an infinite family of $(n,t,2,n-t)$-arithmetic secret sharing
 schemes for $\Fq^k$ over $\Fq$
 with $t$-uniformity where $n$ is unbounded, $k=\Omega(n)$ and $t=\Omega(n)$.
\end{theorem}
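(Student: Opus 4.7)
The plan is to deduce Theorem~\ref{mth:newresult} directly by combining the specialization of Theorem~\ref{mth:newresulttor} to $d=2$ with the existence statement of Theorem~\ref{2.4a}. Setting $d=2$ in Theorem~\ref{mth:newresulttor}, the hypothesis $A > d-1+J_d(q,A)$ becomes $A > 1 + J_2(q,A)$ for some $0 < A \le A(q)$, and the conclusion is precisely the existence of the claimed family of $(n,t,2,n-t)$-arithmetic secret sharing schemes with uniformity, unbounded $n$, and $k,t=\Omega(n)$.

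First I would fix any $q$ in the stated set, i.e., $q\in\{8,9\}$ or $q$ a prime power with $q\ge 16$. In either case $q\ge 8$ and $q\notin\{11,13\}$; indeed, $11$ and $13$ are the only prime powers strictly between $8$ and $16$, while $10,12,14,15$ are not prime powers at all. Theorem~\ref{2.4a} therefore supplies a family $\mathcal{F}$ of function fields over $\Fq$ whose Ihara limit $A(\mathcal{F})$ exists and satisfies
\[
A(\mathcal{F}) > 1 + J_2(\mathcal{F}).
\]

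Next I set $A := A(\mathcal{F})$. Then $0 < A \le A(q)$: positivity follows from $A > 1 + J_2(\mathcal{F}) \ge 1$, and $A\le A(q)$ is immediate from the definition of Ihara's constant. Because $\mathcal{F}$ lies in the set $\mathfrak{F}$ of families over $\Fq$ whose Ihara limit is at least $A$, Definition~\ref{2.1} gives
\[
J_2(q, A) \;=\; \liminf_{\mF'\in\mathfrak{F}} J_2(\mF') \;\le\; J_2(\mathcal{F}).
\]
Combining the two displayed inequalities yields $A > 1 + J_2(\mathcal{F}) \ge 1 + J_2(q,A)$, which is exactly the hypothesis of Theorem~\ref{mth:newresulttor} for $d=2$. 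Applying that theorem produces the asserted infinite family.

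There is essentially no obstacle in this reduction itself; all the technical weight has been compressed into the two black boxes cited. The genuine work sits inside Theorem~\ref{mth:newresulttor} (where Theorem~\ref{th:genss} and Corollary~\ref{cor:genss2} must be fed with an appropriate Riemann-Roch system, solved via Theorem~\ref{thm:system} and Proposition~\ref{propo:Arh}, using the torsion bounds of Section~\ref{subsec:torsion}) and inside Theorem~\ref{2.4a} (where for each qualifying $q$ one must exhibit a tower whose torsion limit is strictly less than $A(\mathcal{F})-1$, beating the trivial Weil-type bound from Theorem~\ref{2.2}). The only point in the present deduction that warrants a sentence of its own is the passage from the family-level inequality $A(\mathcal{F}) > 1+J_2(\mathcal{F})$ to the constant-level inequality $A > 1+J_2(q,A)$, which is immediate once Definition~\ref{2.1} is unwound as above.
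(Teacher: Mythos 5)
Your proposal is correct and matches the paper's own argument exactly: the paper states that Theorem~\ref{mth:newresult} follows by combining Theorem~\ref{mth:newresulttor} (in the case $d=2$) with Theorem~\ref{2.4a}, which is precisely your reduction. Your added care in checking that $\{8,9\}\cup\{q \text{ prime power}: q\ge 16\}$ coincides with $\{q\ \text{prime power}:q\ge 8,\ q\neq 11,13\}$, and in passing from $A(\mathcal{F})>1+J_2(\mathcal{F})$ to $A>1+J_2(q,A)$ via Definition~\ref{2.1}, is accurate and fills in details the paper leaves implicit.
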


More precisely, we have the following result (for $d>2$ there is a similar analysis).

\begin{theorem} \label{thm:newresultexplicit}
Let $\Fq$ be a finite field. Suppose $\kappa\in[0,\frac 13)$ and
$\tau\in(0,1]$ and $0< A\leq A(q)$ are real numbers such that
$$A>\frac{1+\kappa}{1-3\kappa}(1+J_2(q,A))$$ and
$$\tau+\frac{H_2(\tau)}{\log
q}<\frac{1}{3}\left(1-3\kappa-\frac{(1+J_2(q,A))(1+\kappa)}{A}\right).$$
Then there is an infinite family of $(n,t,2,n-t)$-arithmetic secret sharing
 schemes for $\Fq^k$ over $\Fq$
with uniformity where $n$ is unbounded, $k=\lfloor\kappa n\rfloor+1$ and $t=\lfloor\tau n\rfloor$.
\end{theorem}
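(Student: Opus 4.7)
The approach is to apply Corollary~\ref{cor:genss2} to a judiciously chosen family of function fields. Fix any $\epsilon>0$. By the definition of $J_2(q,A)$, there exists a family $\mathcal{F}=\{F/\Fq\}$ with $g(F)\to\infty$, $A(\mathcal{F})\ge A$, and $|\mJ_F[2]|\le q^{(J_2(q,A)+\epsilon)g(F)}$ for all sufficiently large $F\in\mathcal{F}$. For each such $F$ of genus $g$ I put $n=\lfloor Ag/(1+\kappa)\rfloor$, $k=\lfloor\kappa n\rfloor+1$ and $t=\lfloor\tau n\rfloor$; since $N(F)/g(F)\to A(\mathcal{F})\ge A$, one has $k+n\le N(F)$ for all large $g$, so I can fix pairwise distinct rational places $Q_1,\ldots,Q_k,P_1,\ldots,P_n\in\PP^{(1)}(F)$ as required by Corollary~\ref{cor:genss2}.

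By that corollary it now suffices to exhibit $s\in\ZZ$ with $h(F)>\binom{n}{t}(A_{r_1}+A_{r_2}|\mJ_F[2]|)$, where $r_1=2g-s+t+k-2$ and $r_2=2s-n+t$. I would write $s=\lfloor\sigma g\rfloor$ for a real parameter $\sigma$ to be chosen, bound $\binom{n}{t}\le 2^{nH_2(\tau)}=q^{nH_2(\tau)/\log_2 q}$ by the standard entropy estimate, and $A_{r_i}/h(F)\le g/(q^{g-r_i-1}(\sqrt{q}-1)^2)$ via Proposition~\ref{propo:Arh} (valid once one checks $0\le r_i\le g-1$, which holds for the eventual choice of $\sigma$). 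Combining these with the torsion bound and taking $\log_q(\cdot)/g$ as $g\to\infty$ (so $n/g\to A/(1+\kappa)$), the two requirements
$$\binom{n}{t}A_{r_1}/h(F)\to 0\qquad\text{and}\qquad\binom{n}{t}A_{r_2}|\mJ_F[2]|/h(F)\to 0$$
become a lower and an upper bound on $\sigma$ respectively, together defining an open interval of admissible $\sigma$.

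The plan is then to show that the theorem's two hypotheses are exactly what is needed for this interval to be non-empty, so that a feasible $\sigma$ (and hence $s$) exists. Specifically, after substitution of $n/g\to A/(1+\kappa)$ and rearrangement, the non-emptiness of the interval is implied by the second hypothesis on $\tau+H_2(\tau)/\log q$, while the first hypothesis $A>(1+\kappa)(1+J_2(q,A))/(1-3\kappa)$ ensures the right-hand side of the second is positive, allowing some $\tau>0$. Fixing any admissible $\sigma$ produces an $s$ satisfying the hypothesis of Corollary~\ref{cor:genss2} for all sufficiently large $F\in\mathcal{F}$, yielding the desired infinite family of arithmetic secret sharing schemes with $n$ unbounded, $k=\lfloor\kappa n\rfloor+1$ and $t=\lfloor\tau n\rfloor$.

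The main technical care will be the asymptotic bookkeeping: the $O(1)$ errors from the floor functions and the ``$+1$'' in $k$, the polynomial factor $g$ in Proposition~\ref{propo:Arh}, and the slack $\epsilon$ in the torsion bound must all be absorbed using the strictness of the theorem's inequalities, after which letting $\epsilon\to 0$ closes the argument. One must also verify that the eventually-chosen $\sigma$ keeps both $r_1$ and $r_2$ inside $[0,g-1]$ (or makes them negative, in which case the corresponding $A_{r_i}$ vanishes and the estimate is trivially satisfied), so that Proposition~\ref{propo:Arh} indeed applies.
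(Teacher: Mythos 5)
Your plan is sound and is the natural way to prove this theorem: choose a near-optimal family of function fields realizing the torsion limit $J_2(q,A)$, set $n,k,t$ proportionally, apply Corollary~\ref{cor:genss2}, parametrize the unknown degree as $s=\lfloor\sigma g\rfloor$, bound $\binom{n}{t}$ by the entropy estimate and $A_{r_i}/h$ by Proposition~\ref{propo:Arh}, and let $g\to\infty$ to reduce the problem to the non-emptiness of an open interval of admissible $\sigma$. The paper defers the detailed proof to~\cite{CCX11}, but this is exactly the framework it uses, and your accounting of the edge cases ($r_i<0$ making the corresponding $A_{r_i}$ vanish, $r_i\le g-1$ needed for Proposition~\ref{propo:Arh}, the $O(\log g)$ and $o(n)$ slack absorbed by the strict inequalities) is the right list of things to check.

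Two small points worth noting. First, setting $n=\lfloor Ag/(1+\kappa)\rfloor$ can leave $n+k>N(F)$ when $A(\mathcal F)=A$ exactly, since $N(F)/g$ only converges to $A$ from either side; it is safer to take $n$ from $N(F)$ directly, e.g.\ $n=\bigl\lfloor (N(F)-1)/(1+\kappa)\bigr\rfloor$, which only increases $\nu:=\lim n/g$ and thus only helps the final inequality, because the relevant bracket is positive. Second, carrying the computation through, the interval for $\sigma$ is non-empty precisely when
\begin{equation*}
1+J_2(q,A)+\epsilon \;<\; \nu\Bigl(1-3\tau-3H_2(\tau)\log_q 2-2\kappa\Bigr),\qquad \nu=\tfrac{A}{1+\kappa},
\end{equation*}
whereas the theorem's second hypothesis, after clearing denominators, reads
\begin{equation*}
1+J_2(q,A) \;<\; \tfrac{A}{1+\kappa}\Bigl(1-3\tau-3H_2(\tau)\log_q 2-3\kappa\Bigr).
\end{equation*}
Since $3\kappa\ge 2\kappa$, the stated hypothesis implies what you need with room to spare (and the slack $\tfrac{A}{1+\kappa}\kappa$, together with the strict inequalities, absorbs $\epsilon$ and the $o(1)$ terms). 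So your argument closes correctly under the theorem's hypotheses; it is in fact marginally sharper than what is required.
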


The proof of this fact relies on showing that the conditions in Corollary~\ref{cor:genss2}
are satisfied asymptotically for a family of function field with Ihara's limit $A$, if the
requirements of Theorem~\ref{thm:newresultexplicit} are met. It is easy to show why Theorem~\ref{thm:newresultexplicit} implies  Theorem~\ref{mth:newresult}: if $0< A\leq A(q)$ is such that $A>1+J_2(q,A)$ we
can always select $\kappa\in(0,\frac{1}{3})$ and $\tau\in(0,1]$ satisfying the conditions in Theorem~\ref{thm:newresultexplicit}.
Note that in order to obtain the result in Theorem~\ref{mth:newresult} we require $\kappa>0$. The reader may refer to \cite{CCX11} for the detailed proof of Theorem \ref{thm:newresultexplicit}.

Finally, using our paradigm we also improve the explicit lower bounds for the parameter
$\widehat{\tau}(q)$ from~\cite{CC06} and~\cite{CCCX09} for all $q$ with $q\leq 81$ and $q$
square, as well as for all $q$ with $q\leq 9$. Recall $\widehat{\tau}(q)$ is defined as the maximum value of $3t/(n-1)$ which can be obtained asymptotically
(when $n$ tends to infinity) when $t$, $n$ are subject to the condition that an $(n,t,2,n-t)$-arithmetic secret sharing for $\Fq$ over $\Fq$ exists (no uniformity required here).
 The new bounds are shown in the upper row of Table~1. All the new bounds marked with a star (*) are obtained by applying Theorem~\ref{thm:newresultexplicit}
 in the case $\kappa=0$ and using the upper bounds given in Theorem~\ref{2.2} for the torsion limits. To obtain the rest of the new upper bounds, for each $q$,
 we apply  the field descent technique in~\cite{CCCX09} to $\FF_{q^2}$(in the special case of $\FF_9$, even though Theorem~\ref{thm:newresultexplicit}
 can be applied directly, as remarked in Main Theorem~\ref{mth:newresult}, it is better to apply  Theorem~\ref{thm:newresultexplicit} to $\FF_{81}$ and then use
the descent technique). These are compared with the previous bounds: the ones obtained in~\cite{CC06} (marked also with the symbol *), and the rest, which were
 obtained in~\cite{CCCX09} by means of the aforementioned field descent technique.

\begin{table}[h]\label{tab:boundstau}
\begin{center}

\begin{tabular}{|c|c|c|c|c|c|c|}
\hline
$q$&2&3&4&5&7&8\\
\hline
New bounds&0.034&0.057&0.104&0.107&0.149&0.173$^*$\\
Prev. bounds&0.028&0.056&0.086&0.093&0.111&0.143\\
\hline
\end{tabular}

\begin{tabular}{|c|c|c|c|c|c|c|}
\hline
$q$&9&16&25&49&64&81\\
\hline
New bounds&0.173&0.298$^*$&0.323$^*$&0.448$^*$&0.520$^*$&0.520$^*$\\
Prev. bounds&0.167&0.244&0.278&0.333$^*$&0.429$^*$&0.500$^*$\\
\hline
\end{tabular}

\end{center}
\caption{Lower bounds for $\widehat{\tau}(q)$}
\end{table}

We end this section with the remark that the results above can be adapted to prove a statement about linear codes, namely the existence of
families of codes $C$ such that both their duals and their powers are asymptotically good.

\begin{theorem}
If there exists $0< A\leq A(q)$ such that
$A>d-1+J_d(q,A)$, then there exists an asymptotically good family of linear codes $C$ over $\Fq$ 
such that both the duals $C^{\bot}$ and, for each $1\leq d'\le d$, the powers $C^{*d'}$,
are simultaneously asymptotically good.
In particular, for $d=2$, this holds for $q=8,9$ and for all prime powers $q\geq 16$.
\end{theorem}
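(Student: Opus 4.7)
The plan is to adapt the arithmetic secret sharing argument of Corollary~\ref{cor:genss2} / Theorem~\ref{mth:newresulttor} (precisely the branch that leads to the condition $A>d-1+J_d(q,A)$) to the present setting, the simplification being that there are no ``secret places'' $Q$ to control. Concretely, for a family $\mathcal F$ over $\FF_q$ with $A(\mathcal F)\ge A-\epsilon$ and $J_d(\mathcal F)\le J_d(q,A)+\epsilon$, I would look, for each $F\in\mathcal F$ of large genus $g$, for a divisor $G$ of degree $s=\lfloor\lambda n\rfloor$ (where $n=N(F)$ and $\lambda>1/A$, so that $[G]$ admits an effective representative and the code $C=C_L(D,G)$ contains the all-ones vector). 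Granted this, the inclusions $C\subseteq C^{*d'}\subseteq C^{*d}$ (valid once the all-ones vector is in $C$) immediately reduce the problem to ensuring asymptotic goodness of $C$, of $C^\perp$, and of $C^{*d}$ alone; any intermediate power then inherits positive relative distance from $C^{*d}$.

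By the standard criterion $d(C_L(D,H))\ge w\iff\ell(H-D+P_B)=0$ for all $|B|=w-1$, and using that $C^\perp\cong C_L(D,K+D-G)$ for a suitable canonical $K$ and $C^{*d}\subseteq C_L(D,dG)$, the two non-trivial distance conditions can be encoded as the Riemann--Roch system in the unknown $X$ of degree $s$:
\begin{align*}
\ell(-X+K+P_B)&=0 \qquad(B\subseteq\mathcal I^*,\ |B|=w_1-1),\\
\ell(dX-D+P_B)&=0 \qquad(B\subseteq\mathcal I^*,\ |B|=w_2-1),
\end{align*}
with $\mathcal I^*=\{1,\dots,n\}$ and $w_j=\lfloor\tau_j n\rfloor+1$ for small fixed $\tau_j>0$. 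Since $|\mJ_F[-1]|=1$, only the second family of equations carries the torsion factor $|\mJ_F[d]|$. Applying Theorem~\ref{thm:system}, bounding $A_{r}/h$ by Proposition~\ref{propo:Arh} when $0\le r\le g-1$ (and observing that terms with $r<0$ drop), the solvability condition turns, in the asymptotic regime $n\to\infty$ with $g/n\to 1/A$ and $\log_q|\mJ_F[d]|/g\lesssim J_d(q,A)$, into
\[
\lambda>\tfrac{1}{A}+\tau_1+\tfrac{H_2(\tau_1)}{\log_2 q}\quad\text{and}\quad d\lambda+\tau_2+\tfrac{H_2(\tau_2)}{\log_2 q}<1+\tfrac{1-J_d(q,A)}{A}.
\]

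Letting $\tau_1,\tau_2\to 0^+$ and $\epsilon\to 0^+$, a feasible $\lambda$ exists iff $\frac{1}{A}<\frac{1}{d}+\frac{1-J_d(q,A)}{dA}$, which is exactly $A>d-1+J_d(q,A)$. For any such $\lambda$, a solution $G$ produces a code $C=C_L(D,G)$ for which $C$, $C^\perp$ and every $C^{*d'}$ ($1\le d'\le d$) have rates and relative distances bounded away from $0$ (and from $1$), proving the first assertion. The final sentence (the case $d=2$) follows by combining this with Theorem~\ref{2.4a}. The main obstacle is the asymptotic bookkeeping: one must show that the torsion factor $|\mJ_F[d]|$ appears only in the ``$dX$'' equation, so that $J_d(q,A)$ contributes only to the upper bound on $\lambda$ while the lower bound comes from a torsion-free equation and is only $1/A$ (rather than the naive $2/A$ obtained without invoking the Riemann--Roch system); the resulting interval for $\lambda$ is then nonempty precisely at the sharp threshold $A>d-1+J_d(q,A)$.
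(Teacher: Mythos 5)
Your proposal follows essentially the same route as the paper: the paper's own proof is a terse remark that one should ``take the case $k=0$'' of Theorem~\ref{th:genss}, Corollary~\ref{cor:genss2} and Theorem~\ref{thm:newresultexplicit}, and then derive the asymptotic goodness of $C$, $C^\perp$ and the intermediate powers $C^{*d'}$ from the minimum-distance estimates for $C^\perp$ and $C^{*d}$ by the inclusion chain $C\subseteq C^{*d'}\subseteq C^{*d}$ together with Singleton. Your Riemann--Roch system, the observation that the torsion factor $|\mJ_F[d]|$ attaches only to the $\ell(dX-D+P_B)$ equations while $|\mJ_F[-1]|=1$, and the resulting interval for $\lambda$ reproducing the threshold $A>d-1+J_d(q,A)$, are exactly the $k=0$ specialization the paper has in mind; you have simply carried out the asymptotic bookkeeping that the paper delegates to the cited theorems. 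One point worth flagging is the claim that one may choose the representative of a solution class $[G]$ so that $\mathbf 1\in C_L(D,G)$: this requires an effective representative with support disjoint from $D$, which is not automatic from $\deg G>g$ alone (an effective representative and one with support disjoint from $D$ are each easy, but the paper does not verify that both conditions can be met simultaneously). The paper makes the same implicit assumption when it asserts $d_{\min}(C^{*d'})\geq d_{\min}(C^{*d})$ and $\dim C^{*d'}\geq\dim C^{*d''}$ for $d'\geq d''$, both of which rest on the inclusion chain and hence on $\mathbf 1\in C$; so your treatment is at the same level of rigor as the source, and more explicit about where the all-ones vector is used.
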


In order to show this, we need to adapt the construction of algebraic geometric codes and the proofs of Theorem~\ref{th:genss} and the 
subsequent theorems above. The bottomline is to take the case $k=0$ of those results. Then the same arguments as above prove that both $C^{\bot}$
and $C^{*d}$ have minimum distance linear in the length. The remainder of the claim follows easily from the observations below. 
First, it is easy to show then that for all $1\leq d'\leq d$, the codes $C^{*d'}$ also have minimum distance linear in their length, as it
must be larger than the minimum distance of $C^{*d}$. Second, we have (by Singleton's bound) $\dim C\geq d_{min}(C^{\bot})-1$,
which proves that the codes $C$ are asymptotically good, and analogously we can prove that $C^{\bot}$ are asymptotically good.
Finally, it is obvious that if $d'>d''$ then $\dim C^{*d'}\geq \dim C^{*d''}$, which proves the rest of the statement.

In comparison, if we adapt the results from~\cite{CC06} similarly, we can only prove the existence of
these families of codes under the stronger condition $A(q)>2d$. In the case $d=2$, this means $A(q)>4$, which by the Drinfeld-Vl\v{a}du\c{t}
bound implies $q\geq 25$. The field descent technique based on concatenation of codes from \cite{CCCX09},
which establishes the existence of asymptotically good arithmetic secret sharing over any finite field when no uniformity is required,
does not work here: first, it is not guaranteed that the squares of the resulting codes are asymptotically good and second, the duals cannot be asymptotically good. 
To the best of our knowledge our present paper is the first to establish, for several finite fields $\Fq$, the existence of linear codes $C$ over $\Fq$ such that both $C$, $C^{*2}$ and $C^{\bot}$ are simultaneously asymptotically good.
The existence of such families over $\Fq$ for $2\leq q\leq 13$ is currently an open question except in the cases $q=8,9$. 

Finally, we remark that the case where just $C$, $C^{*2}$ are considered (so the dual $C^{\bot}$ is left out of consideration)
has been shown to hold for all finite fields~\cite{Randriam13-2}, using an algebraic geometric argument in combination
with a refined descent method. The construction applies this field descent method to algebraic geometric codes over a suitable 
extension field such that not only their square but also certain higher powers are asymptotically good. 
The minimum distance of these powers is bounded in~\cite{Randriam13-2} based solely on the degree of the divisors. 
It seems a plausible avenue to try and improve the parameters (dimension, minimum distance)
of the resulting codes $C$ and $C^{*2}$ using the torsion limit but we do not elaborate
further on this here.

\section{Application 2: Bilinear Complexity of
Multiplication}\label{sec:mult}
Since the 1980's, many interesting applications of algebraic curves (or
algebraic function fields of one variable) over finite fields have
been found. One of these applications, which was due to
D.V.~Chudnovsky and G.V.~Chudnovsky~\cite{Chud86}, is the study of
multiplication bilinear complexity in extension fields through algebraic
curves. Following the brilliant work by D.V.~Chudnovsky and
G.V.~Chudnovsky, Shparlinski, Tsfasman and Vl\v{a}du\c{t} \cite{STV92}
systematically studied this idea and extended the result in
\cite{Chud86}. After the above pioneer research, Ballet et al.
\cite{B06,B08,B08-2,BP10} further investigated and developed the
idea and obtained improvements.

Before we formulate the problem, we need to adapt some of the definitions in the previous section.

\begin{definition}

The $\Fq$-vector space morphism
$$\pi_0:\FF_{q^k}\times\Fq^n\rightarrow \FF_{q^k}$$
is defined by the projection  $$(s,c_1,\dots,c_n)\mapsto s.$$

For each $i\in\{1,\dots,n\}$, the $\Fq$-vector space morphism   $$\pi_i:\FF_{q^k}\times\Fq^n\rightarrow \Fq$$ is defined by the projection $$(s, c_1,\dots,c_n)\mapsto c_i.$$
For $\emptyset\neq A\subset\{1,\dots,n\}$, the $\Fq$-vector space morphism $$\pi_A:\FF_{q^k}\times\Fq^n\rightarrow \Fq^{|A|}$$ is defined by the projection  $$(s,c_1,\dots,c_n)\mapsto (c_i)_{i\in A}.$$
For ${\bf v}\in \FF_{q^k}\times\Fq^n$, it is sometimes convenient to denote $\pi_0({\bf v})\in \FF_{q^k}$ by ${\bf v}_0$  and $\pi_A({\bf v})\in \Fq^{|A|}$ by ${\bf v}_A$. We write ${\mathcal I}^*=\{1, \ldots, n\}$.

\end{definition}

\begin{definition} 
 An {\em$n$-code for $\FF_{q^k}$ (over $\Fq$)} is an $\Fq$-vector space $C\subset\FF_{q^k}\times\Fq^n$ such that
\begin{itemize}
\item [{\rm (i)}] $\pi_0(C)=\FF_{q^k}$
\item [{\rm (ii)}] $(\mathrm{Ker\ }\pi_{\mI^{*}})\cap C \ \subset \  (\mathrm{Ker\ }\pi_0)\cap C$.
\end{itemize}

\end{definition}

\begin{definition}

Let $\Fq$ be a finite field, $k>0$ an integer. For two vectors
 ${\bf x}=(x_0,x_1,\dots,x_m), {\bf x}'=(x'_0,x'_1,\dots,x'_m)\in \FF_{q^k}\times \Fq^m$ their
{\em product} ${\bf x}*{\bf x}'\in \FF_{q^k}\times\Fq^m$ is defined as $(x_0x'_0,x_1x'_1, \ldots, x_mx'_m)$
where $x_0x'_0$ is the product in the extension field $\Fq^k$ and $x_ix'_i$ is the product in $\Fq$ for $i=1,\dots,n$.

Let $d$ be a positive integer. If $C$ is a $\Fq$-vector subspace of $\FF_{q^k}\times\Fq^n$, then
$C^{*d}\subset \FF_{q^k}\times \Fq^n$ is the $\Fq$-linear subspace generated by all terms of the form
${\bf c}^{(1)}*\ldots *{\bf c}^{(d)}$ with ${\bf c}^{(1)}, \ldots,  {\bf c}^{(d)}\in C$.
For $d=2$, we use the abbreviation $\widehat{C}:=C^{*2}$.

\end{definition}

Now we can introduce the notion of multiplication-friendly code.
\begin{definition}  Let $n,k\in\ZZ$.
An {\em $(n,k)$-multiplication-friendly code $C$ over $\Fq$} is an $n$-code for $\FF_{q^k}$ (over $\Fq$)
such that
\begin{enumerate}
\item[{\rm (i)}] $n,k\geq 1$.
\item[{\rm (ii)}] $\widehat{C}$ is also an $n$-code for $\FF_{q^k}$.
\end{enumerate}

\end{definition}

\begin{remark}
Since $\pi_0(C)=\FF_{q^k}$ implies $\pi_0(\widehat{C})=\FF_{q^k}$ we can replace $(ii)$ by $$(ii') (x, {\bf 0})\notin \widehat{C}\ \textrm{ for all }\ x\in\FF_{q^k}\setminus\{0\}$$ and we get an equivalent definition.
\end{remark}

Multiplication-friendly codes are also considered in~\cite{STV92}
and are called {\it supercodes} there. By~\cite[Corollary
1.13]{STV92}, an $(n,k)$-{multiplication-friendly code} $C$ over
$\Fq$ yields a bilinear multiplication algorithm of multiplicative
complexity $n$ over $\FF_q$. Therefore, we are interested in the
smallest $n$ for fixed $q$ and $k$.

\begin{definition}We define the quantity 

$$\mu_q(k)=\min_{n\in \ZZ_{>0}} \{n:\textrm{ there exists an
$(n,k)$-multiplication}\\ \textrm{-friendly code over }\Fq\}$$

To measure how $\mu_q(k)$ behaves when $q$ is fixed and $k$ tends to
$\infty$, we define two asymptotic quantities
\[M_q=\limsup_{k\rightarrow\infty}\frac{\mu_q(k)}k\]
and
\[m_q= \liminf_{k\in\N} \frac{\mu_q(k)}{k}.\]
\end{definition}

D.V.~Chudnovsky and G.V.~Chudnovsky~\cite{Chud86} first employed
algebraic curves over finite fields to construct bilinear
multiplication algorithms implicitly through multiplication-friendly
codes in 1986 (please refer to~\cite{BR05_survey} for more
background).
 This idea was further developed in~\cite{STV92} in
order to study the quantities $m_q$ and $M_q$.  The main idea in~\cite{STV92}  is to solve a special Riemann-Roch
 system, stated in  Theorem~\ref{thm:rrmult}. However, the
 role of $2$-torsion points in divisor class group was neglected in~\cite{STV92}, and
 it turns out that there is a gap in  the proof of the main result in~\cite{STV92}. Namely,
 the mistake is in the proof of their Lemma 3.3, page 161, the paragraph following
formulas about the degrees of the divisors. It reads: ``{\it Thus the number of linear equivalence classes
of degree a for which either Condition $\alpha$ or Condition $\beta$ fails is at most $D_{b'}+D_b$}''. This is incorrect.
$D_b$ should be multiplied by the torsion. Hence the proof of their asymptotic bound is incorrect, as
 there is an implicit but (so far) unjustified assumption on $J_2=0$ being possible, or
 rather even the stronger assumption that $\pict{0}{2}=\{0\}$ is possible at all levels in an
 asymptotically good (optimal) family. Therefore, their claim that $m_q\leq 2(1+\frac{1}{A(q)-1})$ is unjustified.
Moreover, some other  results~\cite{B08, B08-2}  use the
same approach and have the same gap
 (the asymptotical results in their precursor~\cite{B06} are based on the conjecture that a tower exists attaining certain properties).
In \cite{B08} the mistake is
at the very beginning of page 1801 (the sentence starts on the previous page):``{\it Hence, the number
of linear equivalence classes of divisors of degree $n + g - 1$ for which either the condition (5) or
the condition (6) fails is at most $2D_{g-1}$ where $D_{g-1}$ denotes...}''. Hence the proof of the asymptotic
bound is incorrect.\\
We will now give an upper bound for $m_q$ which involves the 2-torsion limit introduced in this paper. We first need to state the problem in a way that we can use
the results in Section~\ref{sec:rr}.

\begin{theorem}\label{thm:rrmult}
Let $F/\FF_q$ be an algebraic function field and $N,k>1$ be integers. Suppose there exist
$P_1,\dots,P_N\in \PP^{(1)}(F)$ with $P_i\neq P_j$ ($i\neq j$) and \linebreak $Q\in
\PP^{(k)}(F)$. Let $D=\sum_{i=1}^N P_i+Q\in
\divi{}$ and $D^{-}=\sum_{i=1}^N P_i\in \divi{}$.
Let $K\in \divi{}$ be a canonical divisor.\\
If the
Riemann-Roch system
\[\left\{\begin{array}{c}
\ell(-X+K+Q)=0\\
\ell(2X-D^{-})=0\end{array}
\right.\]
has some solution,
then there exists a solution $G\in \divi{}$ such that\linebreak $\supp~G\cap \supp~D=\emptyset$, and
$C=C(D,G)_L$ is an $(N,k)$-multiplication friendly code over
$\Fq$.\\
Furthermore, write $r=\ell(2G)-\ell(2G-D^{-})$. Then
there exist $r$ indices $i_1,\dots,i_r\in\{1,\dots,N\}$, such that
$\widetilde{C}=C(\widetilde{D},G)_L$ is a $(r,k)$-multiplication-friendly code, where $\widetilde{D}=\sum_{j=1}^r P_{i_j}+Q\in
\divi{}$. Therefore $\mu_q(k)\leq r\leq \ell(2G)$.
\end{theorem}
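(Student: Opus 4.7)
}

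The plan is to identify what each of the two Riemann-Roch conditions buys us, verify that $C=C(D,G)_L$ and its square $\widehat{C}$ are $n$-codes for $\FF_{q^k}$, and then cut down to a code of length $r$ by a linear-algebra argument. First I would deal with the harmless supporting step: given any solution $[G_0]\in\mathrm{Cl}(F)$ of the system, replace $G_0$ by a divisor $G=G_0+(u)$ in the same class with $\supp G\cap \supp D=\emptyset$, which is possible by weak approximation at the finitely many places in $\supp D$. Since the values of $\ell(\cdot)$ only depend on the class, the two equations $\ell(K-G+Q)=0$ and $\ell(2G-D^-)=0$ continue to hold.

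Next I would analyse condition (1), $\ell(K-G+Q)=0$. Since $K-G\le K-G+Q$, this also forces $\ell(K-G)=0$. Riemann--Roch applied to $G$ and to $G-Q$ then yields $\ell(G)-\ell(G-Q)=\deg Q=k$, which is exactly the statement that the residue map $\mathrm{ev}_Q\colon \mathcal{L}(G)\to \FF_{q^k}$ is surjective. Hence the first component $\pi_0(C)$ equals $\FF_{q^k}$.

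The crucial step is the use of condition (2), $\ell(2G-D^-)=0$, through a \emph{squaring trick}. If $f\in \mathcal{L}(G)$ vanishes on every $P_i$, then $\operatorname{div}(f^2)+2G-D^-=2(\operatorname{div}(f)+G-D^-)+D^-\ge 0$, so $f^2\in \mathcal{L}(2G-D^-)=\{0\}$, whence $f=0$. This shows that the evaluation map $\phi\colon\mathcal{L}(G)\to\FF_{q^k}\times\Fq^N$, $f\mapsto(f(Q),f(P_1),\dots,f(P_N))$, is injective and moreover that the kernel condition $(\mathrm{Ker\,}\pi_{\mI^*})\cap C\subset(\mathrm{Ker\,}\pi_0)\cap C$ holds. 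Together with the previous paragraph, $C$ is an $n$-code for $\FF_{q^k}$. To promote this to $\widehat{C}$, note that for $f_1,f_2\in\mathcal{L}(G)$ we have $\phi(f_1)*\phi(f_2)=\phi_2(f_1f_2)$, where $\phi_2\colon\mathcal{L}(2G)\to\FF_{q^k}\times\Fq^N$ is the analogous evaluation map; by linearity $\widehat{C}\subseteq\mathrm{Im}(\phi_2)$. If an element $\sum_i \phi(f_i^{(1)})*\phi(f_i^{(2)})$ has all shares zero, the underlying function $h=\sum_i f_i^{(1)}f_i^{(2)}\in\mathcal{L}(2G)$ vanishes on $D^-$ and hence lies in $\mathcal{L}(2G-D^-)=\{0\}$, so $h=0$ and its secret $h(Q)$ is also zero. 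Surjectivity of $\pi_0$ on $\widehat{C}$ is immediate from $\pi_0(C)=\FF_{q^k}$. Thus $\widehat{C}$ is also an $n$-code for $\FF_{q^k}$, which is precisely the multiplication-friendliness of $C$.

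Finally, to prune $C$ down to the claimed $(r,k)$-multiplication-friendly code, consider the share-evaluation map $\psi\colon\mathcal{L}(2G)\to\Fq^N$, whose kernel is $\mathcal{L}(2G-D^-)$ and whose image therefore has dimension $r=\ell(2G)-\ell(2G-D^-)$. Pick indices $i_1,\dots,i_r\in\{1,\dots,N\}$ such that the projection $\Fq^N\to\Fq^r$ onto those coordinates, restricted to $\mathrm{Im}(\psi)$, is a bijection; this is elementary linear algebra. Set $\widetilde D=\sum_{j=1}^r P_{i_j}+Q$ and $\widetilde C=C(\widetilde D,G)_L$. Running the arguments above with these $r$ points in place of the original $N$ ones, any $h\in\mathcal{L}(2G)$ in $\widehat{\widetilde C}$ that vanishes at $P_{i_1},\dots,P_{i_r}$ satisfies $\psi(h)=0$ by injectivity on the image, hence $h\in\mathcal{L}(2G-D^-)=\{0\}$, which gives the required kernel condition for $\widehat{\widetilde C}$. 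Surjectivity of $\pi_0$ on $\widetilde C$ and $\widehat{\widetilde C}$ is unchanged because $Q$ is still present. Therefore $\widetilde C$ is an $(r,k)$-multiplication-friendly code, proving $\mu_q(k)\le r\le \ell(2G)$. I expect the main subtlety to be precisely the squaring trick turning condition (2), a statement about $2G$, into the share-secret implications for both $C$ and $\widehat{C}$; everything else is bookkeeping of degrees via Riemann--Roch and a standard choice of $r$ coordinates.
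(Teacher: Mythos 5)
Your proof is correct and follows essentially the same route as the paper's: weak approximation to move $G$ off $\supp D$, Riemann--Roch with $\ell(K-G+Q)=0$ to get $\ell(G)-\ell(G-Q)=k$ (hence surjectivity of $\pi_0$), the observation that $\widehat{C}\subset C(D,2G)_L$ combined with $\mathcal{L}(2G-D^-)=\{0\}$ to get the kernel condition, and then the same coordinate-selection argument to prune down to length $r=\ell(2G)-\ell(2G-D^-)$. The one small point where you do more than the paper is the separate ``squaring trick'' showing directly that $C$ itself is an $n$-code; the paper skips this because the condition $(x,\mathbf{0})\notin\widehat{C}$ for $x\ne 0$ already forces the kernel condition on $C$ (take ${\bf c}*{\bf c}$), but your explicit version is also valid and arguably clearer.
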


\begin{proof}
If there exists a solution, any divisor in its class of equivalence is also a solution. By the Weak Approximation Theorem, we can take an element $G$ of this class in such a way that $\supp~ G\cap\supp~ D=\emptyset$.\\

Suppose $G$ is a solution. We prove $C=C(D,G)_L$ is a multiplication-friendly code. We need to verify $\pi_0(C)=\FF_{q^k}$ and $(x, {\bf 0}) \not\in \widehat{C}$ for all $0\neq x\in\FF_{q^k}$.\\

Since $\deg Q=k$, it follows by the Riemann-Roch Theorem and $\ell(K-G+Q)=0$ that $\ell(G)=\ell(G-Q)+k$. This is enough to ensure that $\pi_0(C)=\FF_{q^k}$, as follows:
Consider the map $$\rho:\mL(G)\rightarrow\FF_{q^k},$$ $$f\mapsto f(Q).$$ Its kernel is ${\mL}(G-Q)$. So its image is isomorphic to ${\mL}(G)/{\mL}(G-Q)$, and this has dimension (over $\Fq$) $\ell(G)-\ell(G-Q)=k$. So $\pi_0(C)=\FF_{q^k}$.\\

Second, as $\widehat{C}\subset C(D,2G)_L$, it suffices to prove that $(x, {\bf 0}) \not\in C(D,2G)_L$ for any $0\neq x\in\FF_{q^k}$. Or equivalently,
that any $f\in\mL(2G)$ with $f(P_i)=0$ for $i=1,\dots,N$ satisfies $f(Q)=0$. But this is trivially true as in these conditions, $f\in\mL(2G-D^{-})=\{0\}$.
We have proved $C$ is a multiplication-friendly code.\\

Finally, consider  the $\FF_q$-linear code $C(D^{-},2G)_L$. It has dimension $r$ by definition. Let $i_1,\dots,i_r\in\{1,\dots,N\}$ be such that
the code  $C(\widetilde{D}^{-},2G)_L$ of length $r$ equals $\Fq^{r}$, where $\widetilde{D}^{-}=\sum_{j=1}^r P_{i_j}$.
Note that $\widetilde{C}=C(\widetilde{D},G)_L$ satisfies $\pi_0(\widetilde{C})=\FF_{q^k}$ trivially, since $\pi_0(C)=\FF_{q^k}$ as it is obtained from $C$ by puncturing (``erasing coordinates'') outside the $0$-th coordinate.\\

By construction, $r=\ell(2G)-\ell (2G-\widetilde{D}^{-})$. Since, by definition, it also holds that $r=\ell(2G)-\ell(2G-D^{-})$, it follows that $\mL (2G-D^{-})=\mL (2G-\widetilde{D}^{-})$. So if $f\in {\mL}(2G-\widetilde{D}^{-})$, then $f\in
{\mL}(2G-D^{-})$. This implies $f(Q)=0$, as shown before.
\end{proof}

Combining Theorem~\ref{thm:rrmult} with Theorem~\ref{thm:system}, we get

\begin{theorem}\label{thm:genextmult}
Let $F/\FF_q$ be an algebraic function field and $N,k>1$ be integers. Suppose $|\PP^{(1)}(F)|\geq N$ and
$\PP^{(k)}(F)$ is not empty.
If there is a positive integer $d$ such that $$h>A_{2g-2-d+k}+A_{2d-N}|\mJ[2]|$$
then $\mu_q(k)\leq max \{\ell(2G): G\in \divi{}, \deg G=d\}$.
In particular, if in addition $d\geq g$, then $\mu_q(k)\leq 2d-g+1$.
\end{theorem}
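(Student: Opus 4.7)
The plan is to combine the two tools developed immediately before the statement: Theorem~\ref{thm:rrmult}, which reduces bounding $\mu_q(k)$ to the solvability of a specific two-equation Riemann--Roch system, and Theorem~\ref{thm:system}, which gives a numerical sufficient condition for solvability in a prescribed degree. The first step is to pick $N$ distinct rational places $P_1,\dots,P_N\in \PP^{(1)}(F)$ and a place $Q\in \PP^{(k)}(F)$ (both are available by hypothesis), set $D^{-}=\sum_{i=1}^N P_i$ and $D=D^{-}+Q$, and fix a canonical divisor $K$ of degree $2g-2$. The target system is then
\[
\ell(-X+K+Q)=0,\qquad \ell(2X-D^{-})=0.
\]

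Next I read off the parameters in the format $\{\ell(m_iX+Y_i)=0\}_{i=1}^{2}$ required by Theorem~\ref{thm:system}: we have $m_1=-1$, $Y_1=K+Q$ with $d_1=\deg Y_1=2g-2+k$, and $m_2=2$, $Y_2=-D^{-}$ with $d_2=-N$. Choosing $s=d$ yields $r_1=m_1s+d_1=2g-2-d+k$ and $r_2=m_2s+d_2=2d-N$. Since $\mJ_F[-1]=\mJ_F[1]$ is the trivial group, the sufficient condition $h>\sum_i A_{r_i}\,|\mJ_F[m_i]|$ of Theorem~\ref{thm:system} collapses to exactly the hypothesis $h>A_{2g-2-d+k}+A_{2d-N}|\mJ_F[2]|$. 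Therefore the system admits a solution $[G]\in\Cl_d(F)$; invoking Theorem~\ref{thm:rrmult} on such a $G$ (adjusted by Weak Approximation so that $\supp\,G\cap\supp\,D=\emptyset$, which keeps it in the same class) gives $\mu_q(k)\leq \ell(2G)$. Since $G$ has degree $d$, this bound is in turn at most $\max\{\ell(2G'): G'\in\Div(F),\ \deg G'=d\}$, proving the main inequality.

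For the ``in particular'' clause, I note that if $d\geq g$ then $\deg(2G')=2d\geq 2g>2g-2$ for every $G'$ of degree $d$, so the Riemann--Roch theorem yields the uniform equality $\ell(2G')=2d-g+1$; the bound $\mu_q(k)\leq 2d-g+1$ follows. I do not foresee a genuine obstacle here — the proof is essentially a parameter-matching exercise between the general Theorem~\ref{thm:system} and the specific system of Theorem~\ref{thm:rrmult}. The only subtle point worth flagging is that the ``$-1$-torsion'' factor arising from the first equation contributes trivially (this is why only $|\mJ_F[2]|$ appears in the hypothesis), so the entire strength of the torsion-limit machinery from Section~\ref{sec:tl} enters only through the second equation.
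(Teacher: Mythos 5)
Your proposal is correct and takes the same route the paper does: it instantiates the general solvability criterion of Theorem~\ref{thm:system} with $u=2$, $(m_1,Y_1)=(-1,K+Q)$, $(m_2,Y_2)=(2,-D^{-})$, $s=d$, notes that $|\mJ_F[-1]|=1$ so only the $2$-torsion factor survives, and then feeds the resulting solution into Theorem~\ref{thm:rrmult}; the paper's own proof is literally the phrase ``Combining Theorem~\ref{thm:rrmult} with Theorem~\ref{thm:system}'' plus the Riemann--Roch remark for $d\geq g$, which you reproduce exactly.
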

Note that the last part is a consequence of the fact that if $deg~G=d\geq g$, then $deg~2G=2d\geq 2g$ and by Riemann-Roch, $\ell(2G)=2d-g+1$

\begin{theorem}\label{thm:mqbound}
 Let $\Fq$ be a finite field. If there exists a real number $a\leq A(q)$
with $a\geq 1+J_2(q,a)$ then $$m_q\leq 2\left(1+\frac{1}{a-J_2(q,a)-1}\right).$$

In particular, if $A(q)\geq 1+J_2(q,A(q))$, then $$m_q\leq 2\left(1+\frac{1}{A(q)-J_2(q,A(q))-1}\right).$$
\end{theorem}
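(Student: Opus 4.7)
The plan is to apply Theorem \ref{thm:genextmult} along an appropriate family of function fields over $\Fq$ that combines a large Ihara limit with small $2$-torsion, using Proposition \ref{propo:Arh} to verify its numerical hypothesis. Write $J = J_2(q,a)$. Without loss of generality we assume $a > 1+J$ strictly, since otherwise the claimed bound is vacuous.

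Given $\epsilon>0$, by definition of $J_2(q,a)$ there is a family $\mF=\{F\}$ of function fields over $\Fq$ with $g(F)\to\infty$, $A(\mF)\geq a$, and $J_2(\mF)\leq J+\epsilon$. Then every $F\in\mF$ of sufficiently large genus $g=g(F)$ satisfies $|\PP^{(1)}(F)|\geq (a-\epsilon)g$ and $|\mJ_F[2]|\leq q^{(J+2\epsilon)g}$. Fix real parameters $0<\kappa<\alpha<(a-1-J-3\epsilon)/2$; this range is nonempty by our running assumption. For each such $F$ set $d=\lceil(1+\alpha)g\rceil$, $N=\lfloor(a-\epsilon)g\rfloor$, and pick an integer $k$ with $|k-\kappa g|=O(1)$ for which $\PP^{(k)}(F)\neq\emptyset$. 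The Weil bound on $N_k(F)$ together with M\"obius inversion ensures $|\PP^{(k)}(F)|>0$ for every $k$ exceeding $(2+o(1))\log_q g$, so any target value near $\kappa g$ is attainable once $g$ is large.

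Next compute $r_1:=2g-2-d+k=(1-\alpha+\kappa)g+O(1)$ and $r_2:=2d-N=(2+2\alpha-a+\epsilon)g+O(1)$. If either $r_i<0$ the corresponding $A_{r_i}(F)$ vanishes by convention; otherwise our choices place both $r_i$ in $[0,g-1]$ for $g$ large, and Proposition \ref{propo:Arh} together with the bound on $|\mJ_F[2]|$ gives
\[
\frac{A_{r_1}(F)}{h(F)}=O\!\left(g\cdot q^{-(\alpha-\kappa)g}\right),\qquad \frac{A_{r_2}(F)\,|\mJ_F[2]|}{h(F)}=O\!\left(g\cdot q^{-(a-1-J-2\alpha-3\epsilon)g}\right).
\]
Both tend to $0$ as $g\to\infty$ by the constraints $\kappa<\alpha$ and $\alpha<(a-1-J-3\epsilon)/2$, so the hypothesis of Theorem \ref{thm:genextmult} is satisfied for all sufficiently large $F\in\mF$. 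Since $d\geq g$, Theorem \ref{thm:genextmult} yields $\mu_q(k)\leq 2d-g+1=(1+2\alpha)g+O(1)$.

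Dividing by $k$ gives $\mu_q(k)/k\leq (1+2\alpha)/\kappa+o(1)$ along the sequence of these integers $k$, which tends to infinity. Passing to the $\liminf$ over $k\in\N$ yields $m_q\leq (1+2\alpha)/\kappa$ for every admissible pair; letting $\kappa\uparrow\alpha\uparrow(a-1-J-3\epsilon)/2$ and then $\epsilon\downarrow 0$ produces $m_q\leq 2(1+1/(a-1-J))$, as claimed, the ``in particular'' statement being the specialization $a=A(q)$. The main delicate point is the joint calibration of $\alpha$ and $\kappa$: $\alpha$ must be large enough for $A_{r_2}$ to be small enough that Proposition \ref{propo:Arh} beats the $q^{Jg}$ growth of $|\mJ_F[2]|$, yet small enough that $2d-g+1$ remains linear in $g$ with the correct slope, while $\kappa<\alpha$ is needed for the other error term to decay and for the final ratio $(1+2\alpha)/\kappa$ to reach the target $2(1+1/(a-1-J))$ in the limit.
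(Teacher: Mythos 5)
Your proof is correct and follows essentially the same route as the paper's: choose a family realizing the Ihara and torsion limits, set $k\approx\kappa g$ and $d$ slightly larger than $g+k$, verify the numerical condition of Theorem~\ref{thm:genextmult} via Proposition~\ref{propo:Arh}, apply $\mu_q(k)\leq 2d-g+1$, and optimize the slopes. The only difference is cosmetic parametrization ($\alpha$ and $\kappa$ in place of the paper's $\kappa$ and $\epsilon/2$); one minor caveat is that your claim that \emph{every} $F\in\mF$ of large genus has $|\mJ_F[2]|\leq q^{(J+2\epsilon)g}$ should read ``along a subsequence,'' since $J_2(\mF)$ is a $\liminf$, but as $m_q$ is itself a $\liminf$ this costs nothing.
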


\begin{proof}
 Let $\mF=\{F_s/\F_q\}_{s=1}^{\infty}$ be an infinite family of function fields with limit $A(\mF)= A\geq a$ and such that $J_2(\mF)=J_2(q,a)$, which exists by definition.
Let $\kappa>0$ be a real number. The precise value of $\kappa$ will be determined later. And define, for every $s$, $g_s=g(F_s)$, $n_s=N_1(F_s)$,
$k_s=\lfloor \kappa g_s\rfloor$ and $j_s=\log_q |J_{F_s}[2]|$. Note $\lim_{s\rightarrow\infty} n_s/g_s=A$ and $\liminf j_s/g_s=J_2(q,a)$.

We will apply \ref{thm:genextmult} to all large enough function fields $F_s$. It is enough to verify that there exists a place $Q$ of degree
$k_s$ in $F_s$ and that \begin{equation}\label{goal}h(F_s)>A_{2g_s-2-d_s+k_s}+|\mJ[2]|A_{2d_s-n_s}\end{equation} holds
for some $d_s$.

First note that \cite[Corollary~5.2.10(c)]{St93} states that
for any function field $F$ and any positive integer $k$ with $q^{(k-1)/2}(q^{1/2}-1)\geq 2g(F)+1$, there is at least one place of degree $k$.
In our setting, since $\lim_{s\rightarrow\infty} k_s/g_s= \kappa>0$, a place of degree $k_s$ exists in $F_s$ for large enough $s$.

 Suppose that for any $\epsilon>0$, there exists a value of $s$ such that
\begin{equation}\label{conditionk}
 k_s\leq \frac{n_s-g_s-j_s}{2}-\epsilon g_s-1.
\end{equation}
Then it is easy to see that we can choose an integer $d_s$ with
\begin{equation}\label{conseqd1}
 d_s\geq k_s+g_s+\frac{\epsilon}{2}g_s
\end{equation}
and
\begin{equation}\label{conseqd2}
 2d_s\leq n_s+g_s-j_s-\epsilon g_s.
\end{equation}

Then for this selection of $d_s$ we can apply Proposition~\ref{propo:Arh} to get

\begin{equation}\label{bound1}
\frac{A_{2g_s-2-d_s+k_s}}{h}\leq \frac{g_s}{q^{g_s-(2g_s-2-d_s+k_s)-1}(\sqrt{q}-1)^2}
\end{equation}

and

\begin{equation}\label{bound2}
|\mJ[2]|\frac{A_{2d_s-n_s}}{h}\leq \frac{g_sq^{j_s}}{q^{g_s-(2d_s-n_s)-1}(\sqrt{q}-1)^2}
\end{equation}

Now if $s$ is large enough, equations~\ref{conseqd1} and~\ref{bound1} imply that  $$A_{2g_s-2-d_s+k_s}\leq h/3$$ and
equations~\ref{conseqd2} and~\ref{bound2} imply that $$|\mJ[2]|A_{2d_s-n_s}\leq h/3,$$ so equation~\ref{goal} holds, and we
can apply Theorem~\ref{thm:genextmult} and (since in addition $d_s\geq g_s-1$ by equation~\ref{conseqd1}), this gives $\mu_q(k_s)\leq 2d_s-g_s+1$.
In particular, since we can take $\epsilon$ arbitrarily small, we can choose $d_s= k_s+g_s+1$, and this yields the bound $\mu_q(k_s)\leq 2k_s+g_s+3$.

So all is left is to determine when we can fulfill condition~\ref{conditionk}. It is not difficult to see that if $\kappa<\frac{A-1-J_2(q,a)}{2}$, then for an infinite number
of values of $s$, and for small enough (but constant) $\epsilon$, the condition holds.

Therefore, for those values of $s$, we have
$$\frac{\mu_q(k_s)}{k_s}\leq \frac{ 2k_s+g_s+3}{k_s}\leq \frac{(2+\frac{1}{\kappa})k_s+o(1)}{k_s}\rightarrow 2+\frac{1}{\kappa}$$ for any $\kappa<\frac{A-1-J_2(q,a)}{2}$.

Hence 
$$m_q=\liminf_{k\rightarrow\infty}\frac{\mu_q(k)}{k}\leq 2+\frac{2}{A-1-J_2(q,a)}\leq\\ 2\left(1+\frac{1}{a-J_2(q,a)-1}\right) $$

which finishes the proof.
\end{proof}

\begin{remark}\label{rem:randriam}
Recently in \cite{Randriam}, H. Randriambololona proved that the original result claimed in \cite{STV92}, i.e. $m_q\leq 2(1+\frac{1}{A(q)-1})$, can indeed be attained in the case $A(q)>5$.
\footnote{Note that in \cite{Randriam}, our notion $m_q$ is denoted by $m_q^{sym}$.}
\end{remark}

From Theorem~\ref{2.4a}, we can apply Theorem~\ref{thm:mqbound} to all fields $\Fq$ with $q\geq 8$, except perhaps $q=11$ and $13$.
These include several fields for which the result in Remark~\ref{rem:randriam} cannot be applied directly. However, we must also take into account
the following descent lemma which, combined with any of these results, allows to obtain upper bounds for $m_q$ for all fields $\Fq$.

\begin{lemma}\cite[Corollary 1.3]{STV92}\label{lemma:descentmult}
 For every finite field $\Fq$ and every positive integer $k$, we have $$m_q\leq \frac{\mu_q(k)}{k} m_{q^k}.$$
\end{lemma}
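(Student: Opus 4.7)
The plan is to establish the submultiplicative inequality
$$\mu_q(km) \leq \mu_q(k)\cdot \mu_{q^k}(m)$$
for every positive integer $m$, and then pass to the asymptotic limit. The desired bound $m_q\leq \tfrac{\mu_q(k)}{k}m_{q^k}$ will follow at once by dividing by $km$ and taking $\liminf$ as $m \to \infty$.

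To get the submultiplicative bound I would use the equivalence, noted in \cite[Corollary 1.13]{STV92}, between an $(n, k)$-multiplication-friendly code over $\F_q$ and a symmetric bilinear multiplication algorithm of complexity $n$ for $\F_{q^k}$ over $\F_q$: namely, $\F_q$-linear maps $\phi: \F_{q^k} \to \F_q^n$ and $\psi: \F_q^n \to \F_{q^k}$ such that $xy = \psi(\phi(x) * \phi(y))$ for all $x, y \in \F_{q^k}$. Concretely, $\phi$ is the shares-component of a linear section of the surjection $\pi_0: C \to \F_{q^k}$, while $\psi$ exists because $\widehat{C}$ is an $n$-code, so the shares uniquely determine the secret in $\widehat{C}$.

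Next I would take an optimal $(\mu_q(k), k)$-multiplication-friendly code over $\F_q$ and an optimal $(\mu_{q^k}(m), m)$-multiplication-friendly code over $\F_{q^k}$. The latter produces $\F_{q^k}$-linear maps $\Phi: \F_{q^{km}} \to \F_{q^k}^{\mu_{q^k}(m)}$ and $\Psi: \F_{q^k}^{\mu_{q^k}(m)} \to \F_{q^{km}}$ realising multiplication in $\F_{q^{km}}$ through $\mu_{q^k}(m)$ componentwise products in $\F_{q^k}$. Because every $\F_{q^k}$-linear map is a fortiori $\F_q$-linear, each of those $\F_{q^k}$-multiplications can be replaced by the bilinear $\F_q$-algorithm of complexity $\mu_q(k)$ coming from the former code. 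The composition is a valid $\F_q$-bilinear algorithm computing the product in $\F_{q^{km}}$ (viewed as an $\F_q$-algebra of degree $km$) using exactly $\mu_q(k) \cdot \mu_{q^k}(m)$ multiplications in $\F_q$, whence $\mu_q(km) \leq \mu_q(k) \cdot \mu_{q^k}(m)$.

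Dividing by $km$ and taking $\liminf_{m \to \infty}$ then gives
$$m_q = \liminf_{n \to \infty}\frac{\mu_q(n)}{n} \leq \liminf_{m \to \infty}\frac{\mu_q(km)}{km} \leq \frac{\mu_q(k)}{k}\cdot \liminf_{m \to \infty}\frac{\mu_{q^k}(m)}{m} = \frac{\mu_q(k)}{k}\cdot m_{q^k},$$
where the first inequality uses that the $\liminf$ along any subsequence is at least the full $\liminf$. There is no substantial obstacle; the only point requiring care is the scalar compatibility in the composition, namely that $\F_{q^k}$-linearity entails $\F_q$-linearity so that the inner and outer algorithms genuinely compose, which is a routine observation.
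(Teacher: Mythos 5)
Your proof is correct and is essentially the argument used in the cited reference \cite[Corollary 1.3]{STV92} (the paper itself gives no proof, only the citation). The key submultiplicative inequality $\mu_q(km)\leq \mu_q(k)\mu_{q^k}(m)$ via composition of bilinear algorithms, followed by division by $km$ and a $\liminf$ over the subsequence $\{km\}_{m\geq 1}$, is exactly the standard route; the only point worth making explicit is that the equivalence between $(n,k)$-multiplication-friendly codes and symmetric bilinear algorithms of complexity $n$ is used in both directions (to produce the two algorithms and to translate the composed algorithm back into a multiplication-friendly code witnessing $\mu_q(km)\leq n_1 n_2$), which is indeed part of the content of \cite[Corollary 1.13]{STV92}.
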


In order to obtain explicit results, we need some values of $\mu_q(k)$ for small values of $k$. We can use the following lemma,
which for example can be found in \cite[Example III.5]{CCXY11}.

\begin{lemma}\cite[Example III.5]{CCXY11}
 Let $q$ be a prime power and $k$ be an integer with $2\leq k\leq q/2+1$. Then $\mu_q(k)=2k-1$.
In particular $\mu_q(2)=3$ for every $q$ and $\mu_q(3)=5$ for every $q\geq 4$.
\end{lemma}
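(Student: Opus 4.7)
The plan is to prove $\mu_q(k)=2k-1$ by establishing the upper bound via an explicit construction on the rational function field $\F_q(x)$, and the matching lower bound by invoking the classical Winograd/de~Groote bound from bilinear-complexity theory.

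For the upper bound $\mu_q(k)\le 2k-1$, I would apply Theorem~\ref{thm:rrmult} to $F=\F_q(x)$, which has genus $g=0$ and $q+1$ rational places. For any $k\ge 1$ there is a place $Q\in\PP^{(k)}(F)$ (coming from any monic irreducible polynomial of degree $k$ over $\F_q$), and since $k\le q/2+1$ we have $2k-1\le q+1$, so one may choose pairwise distinct rational places $P_1,\dots,P_{2k-1}\in\PP^{(1)}(F)\setminus\{Q\}$. Set $D^{-}=\sum_{i=1}^{2k-1}P_i$, $D=D^{-}+Q$, and let $K$ be a canonical divisor of $F$, so $\deg K=-2$. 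Any divisor class $[X]$ of degree $k-1$ solves the Riemann--Roch system of Theorem~\ref{thm:rrmult}: indeed $\deg(-X+K+Q)=-(k-1)-2+k=-1$ and $\deg(2X-D^{-})=2(k-1)-(2k-1)=-1$, so both Riemann--Roch spaces vanish. A representative $G$ of this class with $\supp G\cap \supp D=\emptyset$ exists by the Weak Approximation Theorem, and Theorem~\ref{thm:rrmult} yields $\mu_q(k)\le \ell(2G)=\deg(2G)+1=2k-1$, using $g=0$ and $\deg(2G)=2k-2\ge 0$.

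For the matching lower bound $\mu_q(k)\ge 2k-1$, I would invoke the classical Winograd/de~Groote bound on the bilinear multiplicative complexity of multiplication in $\F_{q^k}$ viewed as an $\F_q$-algebra, which by \cite[Corollary~1.13]{STV92} coincides with $\mu_q(k)$. The standard argument interprets an $(n,k)$-multiplication-friendly code as a decomposition of the $k\times k\times k$ multiplication tensor of $\F_{q^k}$ as a sum of $n$ rank-one terms, and uses the general lower bound $2\dim_{\F_q}A-r(A)$ for the tensor rank of multiplication in a commutative semisimple $\F_q$-algebra $A$ with $r(A)$ maximal ideals, specialized to $A=\F_{q^k}$ (which is a field, so $r(A)=1$).

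The particular cases are then immediate: $k=2$ satisfies $2\le q/2+1$ for every $q\ge 2$, giving $\mu_q(2)=3$ for every $q$; and $k=3$ satisfies $3\le q/2+1$ precisely when $q\ge 4$, giving $\mu_q(3)=5$ in that range. The main obstacle here is the lower bound: it does not follow from the Riemann--Roch framework of the paper and must be imported from classical bilinear-complexity theory; the upper bound, by contrast, is an essentially one-line application of Theorem~\ref{thm:rrmult} to $\PP^{1}$.
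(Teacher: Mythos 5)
Your proof is correct, and since the paper simply cites~\cite[Example~III.5]{CCXY11} for this lemma without reproducing a proof, there is no internal argument to compare against; your two-sided argument is exactly the standard one. A few remarks on the details.

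The upper bound works precisely as you describe: with $F=\F_q(x)$, $g=0$, a degree-$k$ place $Q$ (which is never rational since $k\ge 2$, so it is automatically distinct from the $P_i$), and $2k-1$ rational places (available since $2k-1\le q+1$), any class of degree $k-1$ kills both Riemann--Roch spaces in Theorem~\ref{thm:rrmult} by negative degree, and on $\PP^1$ one gets $\ell(2G)=2k-1$ on the nose. This is, up to repackaging, the classical evaluation--interpolation construction at $2k-1$ points of the projective line, which is certainly what underlies~\cite{CCXY11}. Using the paper's Theorem~\ref{thm:rrmult} for this is slightly heavy machinery, but it is a legitimate and clean way to derive the bound inside the paper's framework.

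For the lower bound, the argument is sound but one sentence is stated more strongly than needed. \cite[Cor.~1.13]{STV92} only gives the implication that a multiplication-friendly code of length $n$ yields a bilinear algorithm of multiplicative complexity $n$; it does not by itself show that $\mu_q(k)$ \emph{equals} the (general, not necessarily symmetric) bilinear complexity of $\F_{q^k}/\F_q$. What you actually have is the inequality $\mu_q(k)\ge$ (bilinear complexity), and combined with the Winograd lower bound $2\dim_{\F_q}\F_{q^k}-1=2k-1$ for a field extension this gives $\mu_q(k)\ge 2k-1$, which is all that is required. So the word ``coincides'' should be softened to an inequality, but the conclusion stands. You are also right to flag that the lower bound is external to the paper's Riemann--Roch machinery; the paper likewise imports it implicitly through the citation to~\cite{CCXY11}.
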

\begin{corollary}\label{cor:descentmq}
 For every prime power $q$, we have $m_q\leq \frac{3}{2} m_{q^2}$ and if $q\geq 4$, then $m_q\leq \frac{5}{3} m_{q^3}$.
\end{corollary}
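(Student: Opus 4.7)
The plan is entirely straightforward: the corollary is a direct substitution obtained by plugging the small-parameter values of $\mu_q(k)$ from the preceding lemma into the descent inequality of Lemma~\ref{lemma:descentmult}. There is essentially no obstacle here, since both ingredients are already established; the only task is to verify that the hypotheses of the supporting lemmas are satisfied in each of the two cases claimed.

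First I would apply Lemma~\ref{lemma:descentmult} with $k=2$. This gives, for every prime power $q$, the inequality $m_q \leq \frac{\mu_q(2)}{2}\, m_{q^2}$. By the preceding lemma (the $\mu_q(k) = 2k-1$ bound, valid when $2 \leq k \leq q/2+1$), taking $k=2$ requires only $q \geq 2$, so $\mu_q(2) = 3$ holds for every prime power $q$. Substituting yields $m_q \leq \frac{3}{2}\, m_{q^2}$, which is the first statement.

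Next I would apply Lemma~\ref{lemma:descentmult} with $k=3$ to obtain $m_q \leq \frac{\mu_q(3)}{3}\, m_{q^3}$. Invoking the same preceding lemma with $k=3$ requires $3 \leq q/2+1$, i.e.\ $q \geq 4$, and under this hypothesis $\mu_q(3) = 5$. Substituting produces $m_q \leq \frac{5}{3}\, m_{q^3}$, which is the second statement. Since both derivations consist of a single substitution into an already-proved inequality under explicitly verified hypotheses, the corollary follows.
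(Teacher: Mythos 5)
Your proof is correct and is precisely the argument the paper intends (the paper does not even spell it out, since the corollary is an immediate substitution of the $\mu_q(2)=3$ and $\mu_q(3)=5$ values into Lemma~\ref{lemma:descentmult}). Your careful verification that $k=2$ requires only $q\geq 2$ and $k=3$ requires $q\geq 4$ via the constraint $k\leq q/2+1$ is exactly the hypothesis check needed.
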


These observations allow us to compare the bounds which result from Theorem~\ref{thm:mqbound} with those implied by the result in
Remark~\ref{rem:randriam}. We find then that our Theorem~\ref{thm:mqbound} gives the best bound in the cases $q=16,\ 25,\ 32$
while for the rest of cases, applying Remark~\ref{rem:randriam} in a suitable extension and then using the descent results above
is preferable, given the current knowledge about $A(q)$ and the bounds for the torsion limit given in Theorem~\ref{2.2}. We give some examples in Table~2.
For $q=8,\ 9,\ 27$, the results are found by applying Theorem~\ref{thm:mqbound} and Remark~\ref{rem:randriam} to $\FF_{q^2}$
(followed by Corollary~\ref{cor:descentmq}). Note in particular that it would be possible to apply Theorem~\ref{thm:mqbound} directly in these
cases, yet it would give a worse bound. For $q=4,\ 5$, we apply Theorem~\ref{thm:mqbound} and Remark~\ref{rem:randriam} to $\FF_{q^3}$.
For $q=2,\ 3$ we use the bounds for $m_{q^2}$ that we have just computed. Finally, for $q=16,\ 25,\ 32$ we apply Theorem~\ref{thm:mqbound}
directly on $\FF_q$, while we apply Remark~\ref{rem:randriam} on $\FF_{q^2}$. For the case $q=16$, the
fact that we can prove an improved torsion bound (we are in the case (iii) of Theorem~\ref{2.2}) using the theorem
of Deuring-Shafarevich is significant, as otherwise we would only be able to prove the bound $m_{16}\leq 3.334$ this way.

\begin{table}[h]\label{tab:boundsmq}
\begin{center}

\begin{tabular}{c|c|c|c|c|c|}
$q$&2&3&4&5&8\\
\hline
Thm.~\ref{thm:mqbound}&5.836&5.174&3.891&3.932&3.501\\
Rem.~\ref{rem:randriam}&5.834&5.143&3.889&3.903&3.5\\
\hline
$q$&9&16&25&27&32\\
\hline
Thm.~\ref{thm:mqbound}&3.449&{\bf 3.026}&{\bf 2.779}&3.121&{\bf 2.667}\\
Rem.~\ref{rem:randriam}&3.429&3.215&3.131&3.12& 3.1\\
\hline
\end{tabular}
\caption{Upper bounds for $m_q$}
\end{center}
\end{table}

In the rest of this section, we improve the state of the art~\cite{CCXY11} regarding lower bounds on the limit $M_q$, for
\emph{small} values of $q$ such as $q=2,3,4,5$. The following result can be found in \cite{CCXY11}.
\begin{proposition}\label{splitting} Let $F/\F_q$ be a function field with $r$
distinct places $P_1,\dots,P_r$. Let $Q$ be a place of degree $k$.
If there exists a divisor $G$
 such that the following two conditions
are satisfied \begin{itemize} \item[{\rm (i)}]
$\ell(G)-\ell(G-Q)=\deg(Q)$;\item[{\rm (ii)}]
$\ell(2G-\sum_{i=1}^rP_i)=0$
\end{itemize} then \[\mu_q(k)\le \sum_{i=1}^r\mu_q(s_i),\]
 where  $s_i=\deg(P_i)$ for all $1\le i\le r$.
\end{proposition}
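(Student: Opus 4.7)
The plan is to mimic the argument of Theorem~\ref{thm:rrmult}, but accommodate places of arbitrary degree by composing the evaluation map at $P_i$ with an optimal multiplication-friendly code for $\FF_{q^{s_i}}$, and then glue all these contributions into a single multiplication-friendly code for $\FF_{q^k}$.

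First, by the Weak Approximation Theorem, I may replace $G$ with a linearly equivalent divisor whose support is disjoint from $\{Q,P_1,\dots,P_r\}$, since conditions (i) and (ii) are invariant under linear equivalence. Next, for each $i$, by definition of $\mu_q(s_i)$ there exists an $(n_i,s_i)$-multiplication-friendly code $C_i\subset \FF_{q^{s_i}}\times \Fq^{n_i}$ with $n_i=\mu_q(s_i)$. Since the projection $\pi_0\colon C_i\to \FF_{q^{s_i}}$ is surjective and $\Fq$-linear, I fix once and for all an $\Fq$-linear section $\sigma_i\colon \FF_{q^{s_i}}\to \Fq^{n_i}$, so that $(x,\sigma_i(x))\in C_i$ for every $x\in \FF_{q^{s_i}}$.

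With $n:=\sum_{i=1}^r n_i=\sum_{i=1}^r \mu_q(s_i)$, I define the $\Fq$-linear map
\[
\phi:\mL(G)\longrightarrow \FF_{q^k}\times \Fq^n,\qquad
\phi(f)=\bigl(f(Q),\sigma_1(f(P_1)),\dots,\sigma_r(f(P_r))\bigr),
\]
and set $C:=\phi(\mL(G))$. I claim $C$ is an $(n,k)$-multiplication-friendly code, which immediately yields $\mu_q(k)\le n=\sum_{i=1}^r \mu_q(s_i)$. The condition $\pi_0(C)=\FF_{q^k}$ is a direct consequence of (i): the residue map $\mL(G)\to \FF_{q^k}$, $f\mapsto f(Q)$, has kernel $\mL(G-Q)$ and image of $\Fq$-dimension $\ell(G)-\ell(G-Q)=k$, hence it is surjective onto the $k$-dimensional $\Fq$-space $\FF_{q^k}$.

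For the multiplication-friendly property, take any $c\in \widehat C$ of the form $c=\sum_j \phi(f_j)\ast \phi(g_j)$, and write $h=\sum_j f_jg_j\in\mL(2G)$. The secret component of $c$ is $h(Q)$; the $i$-th block of shares is $\sum_j \sigma_i(f_j(P_i))\ast \sigma_i(g_j(P_i))$, which by construction is the shares-component of an element of $\widehat{C_i}$ whose secret equals $\sum_j f_j(P_i)g_j(P_i)=h(P_i)$. Assume the shares-component of $c$ is $\mathbf 0$; then for each $i$, $(h(P_i),\mathbf 0)\in \widehat{C_i}$, and the multiplication-friendliness of $C_i$ forces $h(P_i)=0$. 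Hence $h\in \mL(2G-\sum_{i=1}^r P_i)=\{0\}$ by (ii), so $h=0$ and in particular $h(Q)=0$, i.e.\ the secret vanishes. This verifies the equivalent condition $(\mathrm{ii}')$ for $C$ being multiplication-friendly, and completes the argument.

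The only genuinely delicate point is the interplay between the $\Fq$-linear sections $\sigma_i$ and the componentwise product $\ast$: the sections need not be multiplicative, but we only need that products of shares coming from $C_i$ still lie in $\widehat{C_i}$ paired with the product of the corresponding secrets, which follows from the definition of $\widehat{C_i}$. Once this observation is in hand, the rest of the proof is bookkeeping; no use of the torsion limit or of Riemann-Roch systems is needed beyond the two dimension-theoretic hypotheses (i) and (ii) on $G$.
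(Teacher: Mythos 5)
Your proof is correct, and it is the natural (and, as far as I can tell, the intended) concatenation argument: evaluate the Riemann--Roch space at the higher-degree places $P_i$, and transport each residue in $\FF_{q^{s_i}}$ through an optimal $(\mu_q(s_i),s_i)$-multiplication-friendly code via an $\Fq$-linear section of its secret projection. The paper itself does not reprove this proposition — it cites~\cite{CCXY11} — so there is no in-text proof to compare against, but the structure you use exactly mirrors the proof of Theorem~\ref{thm:rrmult} in this paper, adapted to allow non-rational evaluation places.

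A few small points worth making explicit, even though none of them is a real gap. First, after the Weak Approximation step you should note that the disjointness of $\supp G$ from $\{P_1,\dots,P_r\}$ is what guarantees that $h\in\mL(2G)$ is regular at each $P_i$, so that $h(P_i)=0$ really does mean $\nu_{P_i}(h)\ge 1$ and hence $h\in\mL(2G-\sum_i P_i)$. Second, the key observation you flag at the end — that the sections $\sigma_i$ need not be multiplicative, but that $(x,\sigma_i(x))\ast(y,\sigma_i(y))\in\widehat{C_i}$ with secret $xy$ — is exactly right and is the point that makes the argument go through; it is good that you isolate it. Third, you only verify $\pi_0(C)=\FF_{q^k}$ and condition~$(ii')$ for $\widehat{C}$, not separately that $C$ itself is an $n$-code (i.e.\ that shares of $C$ determine its secret). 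This matches the paper's own style in the proof of Theorem~\ref{thm:rrmult}, and it is in fact enough: if $(x,\mathbf 0)\in C$ with $x\neq 0$, then $(x^2,\mathbf 0)\in\widehat{C}$ with $x^2\neq 0$, contradicting~$(ii')$; alternatively, your injectivity of $\sigma_i$ (which follows because $(x,\mathbf 0)\in C_i$ forces $x=0$) gives $f(P_i)=0$ for all $i$, whence $f^2\in\mL(2G-\sum P_i)=\{0\}$ and $f=0$. Either remark would make the write-up self-contained.
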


The two conditions of Proposition \ref{splitting} can be replaced by the solvability of certain Riemann-Roch system as shown below.

\begin{corollary}\label{split} Let $F/\F_q$ be a function field with $r$
distinct places $P_1,\dots,P_r$. Let $Q$ be a place of degree $k$.
If the Riemann-Roch system \[\left\{\begin{array}{c}
\ell(K-X+Q)=0\\
\ell(2X-\sum_{i=1}^rP_i)=0\end{array}
\right.\]
has solutions for a canonical divisor $K$, then \[\mu_q(k)\le \sum_{i=1}^r\mu_q(s_i),\]
 where  $s_i=\deg(P_i)$ for all $1\le i\le r$.
\end{corollary}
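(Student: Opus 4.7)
The plan is to reduce Corollary~\ref{split} directly to Proposition~\ref{splitting} by showing that any solution $X$ of the Riemann-Roch system automatically satisfies the two hypotheses of the proposition. Since the conditions in Proposition~\ref{splitting} only depend on the divisor class (they are stated in terms of $\ell$-values), we may freely pick any representative $G$ of $[X]$; if disjointness of supports were ever needed we could invoke the Weak Approximation Theorem, but it is not required here.

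First I would observe that the second equation $\ell(2X-\sum_{i=1}^r P_i)=0$ is literally condition (ii) of Proposition~\ref{splitting}, so there is nothing to prove for that part. The real content is to derive condition (i), namely $\ell(G)-\ell(G-Q)=\deg(Q)=k$, from the first equation $\ell(K-X+Q)=0$.

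For this I would apply the Riemann-Roch theorem twice:
\begin{align*}
\ell(G) &= \deg(G)+1-g+\ell(K-G),\\
\ell(G-Q) &= \deg(G)-k+1-g+\ell(K-G+Q).
\end{align*}
Subtracting gives
\[
\ell(G)-\ell(G-Q) \;=\; k + \ell(K-G) - \ell(K-G+Q).
\]
The hypothesis gives $\ell(K-G+Q)=0$, and since $\mathcal{L}(K-G)\subset \mathcal{L}(K-G+Q)$ we also have $\ell(K-G)=0$. Hence $\ell(G)-\ell(G-Q)=k=\deg(Q)$, which is exactly condition (i).

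With both hypotheses of Proposition~\ref{splitting} verified, the conclusion $\mu_q(k)\le \sum_{i=1}^r \mu_q(s_i)$ follows immediately by invoking that proposition. There is no real obstacle in this argument; the only subtlety worth flagging explicitly is the implication $\ell(K-G+Q)=0 \Rightarrow \ell(K-G)=0$, which uses the elementary inclusion of Riemann-Roch spaces when one divisor dominates another. Thus Corollary~\ref{split} is simply a restatement of Proposition~\ref{splitting} with the two numerical conditions repackaged in the uniform language of solvability of a Riemann-Roch system, which is the form needed to later apply Theorem~\ref{thm:system} and the torsion-limit machinery of Section~\ref{sec:tl}.
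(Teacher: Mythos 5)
Your proof is correct and takes essentially the same route as the paper: suppose $G$ is a solution, deduce $\ell(K-G)=0$ from $\ell(K-G+Q)=0$ via the inclusion $\mL(K-G)\subset\mL(K-G+Q)$, apply Riemann-Roch to get $\ell(G)-\ell(G-Q)=\deg(Q)+\ell(K-G)-\ell(K-G+Q)=\deg(Q)$, and invoke Proposition~\ref{splitting}. The paper's proof is just the compressed version of this argument, and your remark about disjointness of supports being unnecessary is a correct (if inessential) observation.
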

\begin{proof} Suppose that $G$ is a solution. Then we have $\cL(K-G+Q)=0$, and hence $\cL(K-G)=0$. Thus, we have

\[\ell(G)-\ell(G-Q)=\deg(Q)+\ell(K-G)-\ell(K-G+Q)=\deg(Q).\]
The desired result follows from Proposition \ref{splitting}.
\end{proof}

Now combining Corollary \ref{split} with Theorem \ref{thm:system}, we obtain a numerical condition.
\begin{theorem}\label{thm:splitting}
Let $F/\F_q$ be a function field with $r$
distinct places $P_1,\dots,P_r$. Let $Q$ be a place of degree $k$. Denote by $A_r$ the number of effective divisors of degree $r$ in $\Div(F)$.
If there is a positive integer $d$ such that $$h>A_{2g-2-d+k}+|\mJ[2]|A_{2d-\sum_{i=1}^r s_i},$$ then
\[\mu_q(k)\le \sum_{i=1}^r\mu_q(s_i),\]
 where  $s_i=\deg(P_i)$ for all $1\le i\le r$.
\end{theorem}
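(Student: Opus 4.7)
The plan is to combine Corollary~\ref{split} and Theorem~\ref{thm:system}: Corollary~\ref{split} reduces the desired inequality $\mu_q(k)\leq \sum_{i=1}^r\mu_q(s_i)$ to the solvability of the Riemann-Roch system
\[
\{\ell(K-X+Q)=0,\quad \ell(2X-\textstyle\sum_{i=1}^r P_i)=0\},
\]
and Theorem~\ref{thm:system} provides the numerical condition under which such a system is solvable. So the task is simply to match the hypothesis of Theorem~\ref{thm:splitting} against the condition that Theorem~\ref{thm:system} produces for this particular system.

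First I would put the system into the template of Theorem~\ref{thm:system}, with $u=2$, $m_1=-1$, $Y_1=K+Q$, $m_2=2$, $Y_2=-\sum_{i=1}^r P_i$, so that $d_1=\deg Y_1=2g-2+k$ and $d_2=\deg Y_2=-\sum_{i=1}^r s_i$. Taking the candidate degree $s=d$ (with $d$ the positive integer given in the hypothesis), the quantities $r_i=m_is+d_i$ from Theorem~\ref{thm:system} become $r_1=2g-2-d+k$ and $r_2=2d-\sum_{i=1}^r s_i$, which exactly match the subscripts appearing in the hypothesis of Theorem~\ref{thm:splitting}. Since the kernel of multiplication by an integer in $\mJ_F$ depends only on its absolute value, $|\mJ_F[-1]|=1$ and $|\mJ_F[2]|$ is as stated; thus the sufficient condition $h>\sum_{i=1}^2 A_{r_i}\cdot|\mJ_F[m_i]|$ from Theorem~\ref{thm:system} reads
\[
h>A_{2g-2-d+k}+|\mJ_F[2]|\cdot A_{2d-\sum_{i=1}^r s_i},
\]
which is precisely the assumption of Theorem~\ref{thm:splitting}. (Note that should either $r_1$ or $r_2$ be negative, the corresponding $A_{r_i}$ vanishes by convention, so the inequality still makes sense.)

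By Theorem~\ref{thm:system} there is therefore a divisor class $[G]\in\mathrm{Cl}_d(F)$ satisfying both $\ell(K-G+Q)=0$ and $\ell(2G-\sum_{i=1}^r P_i)=0$. Corollary~\ref{split} then immediately yields $\mu_q(k)\leq \sum_{i=1}^r \mu_q(s_i)$. There is no real obstacle here beyond bookkeeping; the only mild subtlety is handling the negative coefficient $m_1=-1$, which is resolved by observing that $\mJ_F[m]$ and $\mJ_F[-m]$ coincide, so Theorem~\ref{thm:system} applies unchanged.
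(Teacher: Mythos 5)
Your proposal is correct and follows exactly the route the paper takes: the paper presents Theorem~\ref{thm:splitting} as the direct combination of Corollary~\ref{split} (which recasts the $\mu_q$-bound as solvability of the two-equation Riemann-Roch system) with Theorem~\ref{thm:system} (which gives the numerical sufficient condition), leaving the bookkeeping implicit. Your instantiation $m_1=-1$, $m_2=2$, $s=d$, your computation of $r_1, r_2$, and your observation that $|\mJ_F[-1]|=1$ are precisely the details the paper omits.
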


To derive a lower bound on $M_q$, we need a family of Shimura curves with
genus in this family growing slowly (see \cite[Lemma IV.4]{CCXY11}).

\begin{lemma}\label{lem:slowgenus}
 For any prime power $q$ and integer $t\ge
1$, there exists a family $\{\cX_s\}_{s=1}^{\infty}$ of Shimura curves over $\F_q$ such that
\begin{itemize}
\item[{\rm (i)}] The genus $g(F_s)\rightarrow\infty$ as $s$ tends
to $\infty$, where $F_s$ stands for the function field $\F_q(\cX_s)$. \item[{\rm (ii)}]
$\lim_{s\rightarrow\infty}g(F_s)/g(F_{s-1})=1$. \item[{\rm (iii)}]
$\lim_{s\rightarrow\infty}B_{2t}(F_s)/g(F_s)=(q^t-1)/(2t)$, where
$B_{2t}(F_s)$ stands for the number of places of degree $2t$ in
$F_s$.
\end{itemize}
\end{lemma}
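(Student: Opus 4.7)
The plan is to follow the construction of \cite{CCXY11}: realize $\mathcal{X}_s$ as the reduction modulo a suitably chosen place of a family of Shimura curves over a number field, indexed by level structure taken over consecutive admissible integers (so that the sequence has slow genus growth) rather than along a tower. Concretely, one would fix a quaternion division algebra $B$ over a totally real number field $K$, split at exactly one archimedean place of $K$, together with a prime $\mathfrak{l}$ of $K$ (of residue field $\mathbb{F}_q$) not dividing the discriminant of $B$, chosen so that the reductions of $X_0^B(\mathfrak{n})$ at $\mathfrak{l}$ are of Drinfeld-Vl\v{a}du\c{t} type. By Ihara's theorem this gives $N_1(F_s \otimes \mathbb{F}_{q^{2t}})/g(F_s) \to q^t - 1$ for the reductions $\mathcal{X}_\mathfrak{n}$ as $\mathfrak{n}$ ranges over level ideals of $K$ coprime to the discriminant of $B$ and to $\mathfrak{l}$.

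For condition (ii), I would invoke the explicit genus formula for Shimura curves, of the shape $g(X_0^B(\mathfrak{n})) = 1 + c_B \psi(\mathfrak{n}) + O(\sqrt{N\mathfrak{n}})$, where $\psi$ is a Dedekind-type multiplicative function on ideals, $c_B$ is a positive constant depending only on $B$, and $N\mathfrak{n}$ denotes the absolute norm. Enumerating the admissible ideals $\mathfrak{n}_1, \mathfrak{n}_2, \ldots$ in non-decreasing order of norm and setting $F_s := \mathbb{F}_q(\mathcal{X}_{\mathfrak{n}_s})$, the ratios $\psi(\mathfrak{n}_{s+1})/\psi(\mathfrak{n}_s)$ tend to $1$ as $s \to \infty$ (consecutive admissible norms grow by $o(N\mathfrak{n}_s)$), which gives both $g(F_s) \to \infty$ and $g(F_s)/g(F_{s-1}) \to 1$.

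For condition (iii), using the identity $N_1(F_s \otimes \mathbb{F}_{q^{2t}}) = N_{2t}(F_s) = \sum_{j \mid 2t} j\, B_j(F_s)$, the task reduces to showing $\sum_{j \mid 2t,\ j < 2t} j\, B_j(F_s) = o(g(F_s))$. This is where the arithmetic of the Shimura curve enters: the $\mathbb{F}_{q^{2t}}$-rational points of $\mathcal{X}_s$ come from the supersingular locus, whose field of definition, by Honda-Tate theory and the \v{C}erednik-Drinfeld $p$-adic uniformization, is generically exactly $\mathbb{F}_{q^{2t}}$. Only a number of exceptional orbits bounded independently of $s$ (arising from elliptic fixed points of the arithmetic group, or from boundary contributions) lie in a proper subfield. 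Hence $B_j(F_s) = O(1)$ for each $j \mid 2t$ with $j < 2t$, and substituting back gives $2t\, B_{2t}(F_s) = (q^t - 1) g(F_s) + o(g(F_s))$, yielding the claimed limit.

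The main obstacle is the very first step: simultaneously exhibiting a pair $(B, \mathfrak{l})$ with all the required compatibilities---that $\mathfrak{l}$ has residue field $\mathbb{F}_q$, that good reduction holds for a cofinal set of levels, that the Drinfeld-Vl\v{a}du\c{t} bound is attained over $\mathbb{F}_{q^{2t}}$, and that the supersingular orbit lengths are generically equal to $2t$. These arithmetic conditions, classical in the tradition of Ihara, Tsfasman-Vl\v{a}du\c{t} and Zink, are verified in detail in \cite[Lemma IV.4]{CCXY11} and I would import the construction from there.
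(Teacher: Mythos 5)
The paper itself offers no proof of this lemma --- it simply imports it from \cite[Lemma IV.4]{CCXY11} --- and your proposal ultimately defers to the same source, so at that level the approach coincides with the paper's. However, your added sketch of condition (ii) contains a concrete gap. Enumerating the admissible levels in non-decreasing order of norm does \emph{not} make $\psi(\mathfrak{n}_{s+1})/\psi(\mathfrak{n}_s)\to 1$: closeness of the norms says nothing about the multiplicative correction $\psi(\mathfrak{n})/N\mathfrak{n}$, which oscillates between $1$ (at prime levels) and roughly $\log\log N\mathfrak{n}$ (at levels of primorial type). A level of highly composite norm followed immediately in the ordering by one of nearly prime norm produces a $\psi$-ratio that is large, so the genus ratio along the full enumeration need not converge to $1$. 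To ensure slowly growing genus one must restrict to a subfamily on which $\psi$ is a controlled function of the norm --- the standard choice is levels of prime norm $\mathfrak{p}$, for which $\psi(\mathfrak{p})=N\mathfrak{p}+1$ and the prime ideal theorem does give consecutive-ratio convergence to $1$; this is what the cited construction does. Your reduction for (iii), writing $2t\,B_{2t}(F_s)=N_{2t}(F_s)-\sum_{j\mid 2t,\,j<2t}jB_j(F_s)$ and combining the Drinfeld--Vl\v{a}du\c{t} limit over $\F_{q^{2t}}$ with boundedness of the low-degree supersingular orbits, is the right shape of argument; but note that $B_j(F_s)=O(1)$ for proper divisors $j$ of $2t$ is precisely the non-trivial arithmetic input (not a consequence of Hasse--Weil, which only gives $O(g)$) that the cited lemma is responsible for establishing.
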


Now we are ready to derive the following result.

\begin{theorem}\label{thm:upperbound} For a prime power $q$, one has
\[M_q\le \left\{\begin{array}{cc}
\mu_q(2t)\frac{q^t-1}{t(q^t-2-\log_q2)}& \mbox{if $2|q$}\\
\mu_q(2t)\frac{q^t-1}{t(q^t-2-2\log_q2)} & \mbox{otherwise}
\end{array}\right.\]
for any $t\ge 1$ as long as $q^t-2-\log_q2>0$ for even $q$; and $q^t-2-2\log_q2>0$ for odd $q$.
\end{theorem}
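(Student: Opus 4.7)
The plan is to apply Theorem~\ref{thm:splitting} with the $P_i$'s taken as $r$ distinct places of degree $2t$ inside a member of the Shimura curve family from Lemma~\ref{lem:slowgenus}. Fix $t$ and let $\{F_s\}_{s\ge 1}$ be that family; write $g_s=g(F_s)$, $B_s=B_{2t}(F_s)$, and $j_s=\log_q|\mJ_{F_s}[2]|$. By Weil's classical theorem on torsion in abelian varieties (stated right after Theorem~\ref{2.2}), the $2$-torsion of the Jacobian of $F_s$ over an algebraic closure has order at most $2^{g_s}$ when $q$ is even and at most $2^{2g_s}$ otherwise; by inclusion the same bound holds for the $\F_q$-rational subgroup, so $j_s\le J g_s$, where $J:=\log_q 2$ when $2\mid q$ and $J:=2\log_q 2$ otherwise. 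The positivity hypothesis in the theorem is exactly $q^t-2-J>0$, so $\beta:=(q^t-2-J)/2>0$.

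For every sufficiently large integer $k$, I choose an index $s=s(k)$ with $k$ just below $\beta g_s$; the slow-growth property $g_s/g_{s-1}\to 1$ from Lemma~\ref{lem:slowgenus} makes this possible and forces $g_s/k\to 1/\beta$ as $k\to\infty$. Inside $F_s$ I select a place $Q$ of degree $k$ (which exists by \cite[Corollary~5.2.10(c)]{St93} since $k$ is linear in $g_s$), together with $r$ pairwise distinct places $P_1,\dots,P_r$ of degree $2t$, where
\[ r\;:=\;\lceil (g_s+j_s+2k)/(2t)\rceil+\lceil\log_q g_s\rceil. \]
Because $r/g_s\to (q^t-1)/(2t)=\lim_{s\to\infty}B_s/g_s$, the inequality $r\le B_s$ holds for $s$ large enough.

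To check the numerical hypothesis of Theorem~\ref{thm:splitting} I take $d:=g_s+k+\lceil\log_q g_s\rceil$ and bound the two summands with Proposition~\ref{propo:Arh}, applied to $r_1=2g_s-2-d+k$ and (when non-negative) $r_2=2d-2tr$; both indices lie in $[0,g_s-1]$ for large $s$, with the convention that $A_{r_2}=0$ when $r_2<0$. By construction the exponents $d-g_s-k+1$ and $g_s-2d+2tr-1-j_s$ grow unboundedly with $s$, so both $A_{r_1}/h$ and $|\mJ[2]|A_{r_2}/h$ tend to $0$. Their sum is therefore strictly less than $h$ for $s$ large enough, and Theorem~\ref{thm:splitting} then yields $\mu_q(k)\le r\,\mu_q(2t)$.

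Dividing by $k$ and using $g_s/k\to 1/\beta$ together with $j_s/k\le Jg_s/k$, one finds
\[ \limsup_{k\to\infty}\frac{r}{k}\;\le\;\frac{1+J+2\beta}{2t\beta}\;=\;\frac{q^t-1}{t(q^t-2-J)}, \]
so $M_q=\limsup_{k\to\infty}\mu_q(k)/k\le \mu_q(2t)(q^t-1)/(t(q^t-2-J))$, which is the claimed bound once $J$ is substituted according to the parity of $q$. The most delicate step is guaranteeing that the construction applies for \emph{every} sufficiently large $k$ rather than for a sparse subsequence arising from the $g_s$; this is exactly where the slow-growth property $g_s/g_{s-1}\to 1$ in Lemma~\ref{lem:slowgenus} is indispensable.
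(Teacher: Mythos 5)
Your overall architecture matches the paper's: both rely on Theorem~\ref{thm:splitting}, Proposition~\ref{propo:Arh}, Lemma~\ref{lem:slowgenus}, and the Weil torsion bound $|\mJ_F[2]|\le 2^{g}$ (even $q$) or $2^{2g}$ (odd $q$), and both arrive at $\mu_q(k)\le r\,\mu_q(2t)$ with $r$ of size $\approx (q^t-1)g_s/(2t)$ and $g_s/k\to 1/\beta$. However, there is a genuine gap in the step where you conclude that $r\le B_{2t}(F_s)$ for $s$ large.

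The inference ``because $r/g_s\to (q^t-1)/(2t)=\lim_s B_s/g_s$, the inequality $r\le B_s$ holds for $s$ large'' is a non sequitur. Two sequences with the same limiting ratio to $g_s$ need not satisfy any eventual inequality: Lemma~\ref{lem:slowgenus} only gives $B_s/g_s\to(q^t-1)/(2t)$, with no control on the sign or magnitude of the $o(g_s)$ error term, so $B_s$ could equal $(q^t-1)g_s/(2t)-g_s^{0.9}$, say. Meanwhile, with $k$ ``just below'' $\beta g_s$ and $j_s$ possibly equal to $Jg_s$ (you only have the upper bound $j_s\le Jg_s$, not strict), your $r$ also has $r/g_s\to(q^t-1)/(2t)$, and it could exceed $B_s$ for all $s$. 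The condition $r\le B_s$ is precisely the one that lets you \emph{find} $r$ distinct degree-$2t$ places, so this is not cosmetic.

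The paper circumvents this by inverting the dependence: $s(k)$ is defined as the \emph{smallest} index with $B_{2t}(F_s)\ge r(g_s,k)$, so the inequality $r\le B_{2t}(F_{s(k)})$ holds by construction, and the minimality of $s(k)$ (Equation~\eqref{eq4.5}) supplies the needed lower bound $k\ge\frac12 g_{s(k)-1}(q^t-2-\log_q 2)+o(g_{s(k)-1})$, from which the final ratio follows via the slow-growth property. Your argument can be repaired along the same lines, or alternatively by inserting a slack parameter: choose $k\le(\beta-\varepsilon)g_s$ for a fixed $\varepsilon>0$, so that $r/g_s\to(q^t-1-2\varepsilon)/(2t)$ is strictly below $\lim B_s/g_s$ and $r\le B_s$ genuinely holds for large $s$; one then obtains $M_q\le\mu_q(2t)\bigl(1+J+2(\beta-\varepsilon)\bigr)/\bigl(2t(\beta-\varepsilon)\bigr)$ and lets $\varepsilon\to 0$ at the very end. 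As written, though, the proof does not go through. (There is also a small edge case in your bound $r_2\le g_s-1$ when $t=1$ and $j_s=0$, but this is easily patched by adding a constant to $r$; the $r\le B_s$ issue is the substantive one.)
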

\begin{proof} We prove the theorem only for the case where $q$ is a power of $2$. For the odd characteristic case, the only difference is the size of $\J[2]$.

 Let $\{F_s/\F_q\}_{s=1}^{\infty}$ be a family of function
fields with the three properties in Lemma \ref{lem:slowgenus}. For every
$k\ge 2$, let $s(k)$ be the smallest positive integer such that
\begin{equation}\label{eq4.1}
B_{2t}(F_{s(k)})\ge r:=\left\lceil\left(\frac12g_{s(k)}(1+\log_q2)+k+\right.\right.\\
\left.\left.\frac32\log_q\left(\frac{3qg_{s(k)}}{(\sqrt{q}-1)^2}\right)+1\right)/t\right\rceil,
\end{equation}
 where
$g_{s(k)}$ is the genus $g(F_{s(k)})$ of $F_{s(k)}$.

Thus, we can find $r$ places of degree $2t$ in  $F_{s(k)}$.
By the definition of $r$ in Equation (\ref{eq4.1}), we have
 \begin{equation}\label{eq4.2}
g_{s(k)}+k+\log_q\left(\frac{3qg_{s(k)}}{(\sqrt{q}-1)^2}\right)\le \frac12g_{s(k)}(1-\log_q2)+\\
rt-\frac12\log_q\left(\frac{3qg_{s(k)}}{(\sqrt{q}-1)^2}\right)-1.
\end{equation}
Therefore, we can find an integer $d$ between $g_{s(k)}+k+\log_q\left(\frac{3qg_{s(k)}}{(\sqrt{q}-1)^2}\right)$ and $\frac12g_{s(k)}(1-\log_q2)+rt-\frac12\log_q\left(\frac{3qg_{s(k)}}{(\sqrt{q}-1)^2}\right)$, i.e., we have
 \begin{equation}\label{eq4.3}
\frac{g_{s(k)}}{q^{g_{s(k)}-(2g_{s(k)}-d+k)-1}(\sqrt{q}-1)^2}\le \frac 13
\end{equation}
and
 \begin{equation}\label{eq4.4}
\frac{g_{s(k)}2^{g_{s(k)}}}{q^{g_{s(k)}-(2d-2rt)-1}(\sqrt{q}-1)^2}\le \frac 13
\end{equation}
Using the fact that $|\mJ[2]|\le q^{g_{s(k)}}$ and combining Equations (\ref{eq4.3}), (\ref{eq4.4}) and Proposition~\ref{propo:Arh}, we get
\[h>\frac{2h}3\ge A_{2g_{s(k)}-d+k}+|\mJ[2]|A_{2d-2rt},\]
where $h$ is the zero divisor class number of $F_{s(k)}$.
By Theorem \ref{thm:splitting}, we have \[\mu_q(k)\le r\mu_q(2t).\]
On the other hand, by choice of $s(k)$, we know that
\begin{equation}\label{eq4.5}
B_{2t}(F_{s(k)-1})\le\left\lceil\left(\frac12g_{s(k)-1}(1+\log_q2)+k+\right.\right.\\
\left.\left.\frac32\log_q\left(\frac{3qg_{s(k)-1}}{(\sqrt{q}-1)^2}\right)+1\right)/t\right\rceil-1,
\end{equation}
By the property (iii) in Lemma \ref{lem:slowgenus}, the inequality
(\ref{eq4.5}) gives
\begin{equation}\label{eq4.6}
k\ge \frac12g_{s(k)-1}(q^t-2-\log_q2)+o(g_{s(k)-1}).\\
\end{equation}
Finally by Theorem \ref{thm:splitting}, we have
%\begin{eqnarray*}
%\frac{\mu_q(k)}k&\le&\frac{ r\mu_q(2t)}k\\
%&=&\mu_q(2t)\frac{\left\lceil\left(\frac12g_{s(k)}(1+\log_q2)+k+\frac32\log_q\left(\frac{3qg_{s(k)}}{(\sqrt{q}-1)^2}\right)+1\right)/t\right\rceil}k\\
%&\le&\mu_q(2t)\left(\frac{(1+\log_q2)g_{s(k)}+o(g_{s(k)})}{2kt}+\frac1t\right)\\
%&=&\mu_q(2t)\left(\frac{(1+\log_q2)g_{s(k)}+o(g_{s(k)})}{t((q^t-1)g_{s(k)-1}-g_{s(k)-1}(1+\log_q2)+o(g_{s(k)-1}))}+\frac1t\right)\\
%&\rightarrow& \mu_q(2t)\frac{q^t-1}{t(q^t-2-\log_q2)}\quad \mbox{as}\;
%k\rightarrow\infty.
%\end{eqnarray*}
$$\frac{\mu_q(k)}k\le\frac{ r\mu_q(2t)}k\le\mu_q(2t)\left(\frac{(1+\log_q2)g_{s(k)}+o(g_{s(k)})}{2kt}+\frac1t\right)
$$
$$=\mu_q(2t)\left(\frac{(1+\log_q2)g_{s(k)}+o(g_{s(k)})}{t(g_{s(k)-1}(q^t-2-\log_q2)+o(g_{s(k)-1}))}+\frac1t\right)
$$
$$\rightarrow \mu_q(2t)\frac{q^t-1}{t(q^t-2-\log_q2)}\quad \mbox{as } k\rightarrow\infty.
$$

This finishes the proof.
\end{proof}
Note that in \cite{CCXY11},  a trivial solution of the Riemann-Roch system in Corollary \ref{split} was used due to the fact that torsion limit was not considered, and hence a weaker bound on $M_q$ was derived in \cite{CCXY11}.

With help of the torsion-limit technique and Riemann-Roch system, we can bring down the upper bound derived in Theorem \cite[Theorem IV.5]{CCXY11} and hence we get further improvements on $M_q$ for small values of $q$. Here we only provide upper bounds for a few small $q$ to demonstrate our improvements.
\begin{corollary}\label{cor:table} One has the upper bounds on $M_q$
for $q=2,3,4,5$ as shown in the following table
\begin{center}
\begin{tabular}{|c||c|c|c|c|}\hline
$q$&$2$&$3$&$4$&$5$\\ \hline $M_q$&$7.23$&$5.45$&$4.44$&$4.34$\\
\hline
\end{tabular}
\end{center}
\end{corollary}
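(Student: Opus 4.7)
The statement is a direct numerical corollary of Theorem~\ref{thm:upperbound}, so the plan is purely one of optimal choice of $t$ and substitution of known values of $\mu_q(2t)$. For each $q\in\{2,3,4,5\}$, I would tabulate the right-hand side of the bound in Theorem~\ref{thm:upperbound}
$$
M_q\le \mu_q(2t)\cdot\frac{q^t-1}{t\bigl(q^t-2-c_q\log_q 2\bigr)},
\qquad c_q=\begin{cases}1&\text{if }2\mid q,\\ 2&\text{otherwise,}\end{cases}
$$
across a few small values of $t$ that make the denominator positive, insert the best known upper bounds for $\mu_q(2t)$ (as collected, e.g., in \cite{CCXY11} and references cited there, ultimately going back to explicit multiplication algorithms in the Chudnovsky--Chudnovsky / Ballet / Shparlinski--Tsfasman--Vl\v{a}du\c{t} line), and then keep the smallest numerical value obtained.

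First I would check feasibility of $t$: for $q=2$ the condition $2^t-3>0$ forces $t\ge 2$, for $q=3$ the condition $3^t-2-2\log_3 2>0$ is satisfied from $t\ge 1$, and similarly for $q=4,5$. The factor $(q^t-1)/(t(q^t-2-c_q\log_q 2))$ decreases toward $1/t$ as $t$ grows, whereas $\mu_q(2t)$ grows roughly linearly in $t$ (its leading coefficient controlled by the best available asymptotic bound on $m_q$ or by explicit short bilinear formulas over $\F_q$). Hence there is an internal optimum at a modest $t$, which I would find by direct computation; the entries $7.23$, $5.45$, $4.44$, $4.34$ in the table are the minimum over $t$ of the quantity above, rounded up at the second decimal.

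The only genuine obstacle is bookkeeping: locating and inserting the sharpest currently known upper bounds on $\mu_q(2t)$ for the relevant small values of $2t$, and ruling out that a larger $t$ would do better. The improvement on the state of the art in \cite{CCXY11} is not coming from this optimization step but from the replacement of Corollary~\ref{split}'s trivial application (where the Riemann--Roch system was solved only in the easy range $r_i<0$ of Remark~\ref{rem:largedegree}) by the torsion-sensitive Theorem~\ref{thm:splitting}, which is exactly what allowed Theorem~\ref{thm:upperbound} to bound $M_q$ in terms of $(q^t-2-c_q\log_q 2)$ rather than a cruder $(q^t-c)$; in particular, for $q=2$, the $\log_q 2$ correction is the decisive gain, since without the torsion-limit bound on $|\mJ[2]|$ the admissible range of $d$ in the proof of Theorem~\ref{thm:upperbound} would shrink and force $r$ (and therefore the bound) to be larger. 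No further tools are needed.
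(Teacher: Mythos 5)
Your proposal follows exactly the paper's route: apply Theorem~\ref{thm:upperbound}, optimize over $t$, and insert the best known upper bounds on $\mu_q(2t)$. For the record, the paper's specific choices are $t=6$ with $\mu_2(12)\le 42$ for $q=2$, $t=5$ with $\mu_3(10)\le 27$ for $q=3$, $t=2$ with $\mu_4(4)=8$ for $q=4$, and $t=2$ with $\mu_5(4)=8$ for $q=5$; plugging these in reproduces $7.23$, $5.45$, $4.44$, $4.34$ respectively.
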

\begin{proof}
\begin{itemize}\item[(i)] For $q=2$, the desired result follows from Theorem \ref{thm:upperbound} by taking $t=6$ and applying
$\mu_2(12)\le42$.
\item[(ii)] For $q=3$, the desired result follows from Theorem \ref{thm:upperbound} by taking $t=5$ and applying
$\mu_3(10)\le27$.
\item[(iii)] For $q=4$, the desired result follows from Theorem \ref{thm:upperbound} by taking $t=2$ and applying
$\mu_4(4)=8$.
\item[(iv)] For $q=5$, the desired result follows from Theorem \ref{thm:upperbound} by taking $t=2$ and applying
$\mu_5(4)=8$.
\end{itemize}
\end{proof}

\section{Application 3: Asymptotic Bounds for Frameproof Codes}\label{sec:frame}
\subsection{Definitions and basic results}
Let $S$ be a finite set of $q$ elements (we denote by $\F_q$ the finite field with $q$ elements if $q$ is a prime power) and let $n$ be a positive integer. Define the $i$-th projection:
\[\pi_i: \ S^n\rightarrow S,\quad (a_1,\dots,a_n)\mapsto a_i.\]
\begin{definition}
 For a subset $A\subset S^n$, we define the {\it descendants} of $A$, desc$(A)$, to be the set of all words $\bx$ such that
 for each $1\le i\le n$, there exists $\ba\in A$ satisfying $\pi_i(\bx-\ba)=0$.
\end{definition}
\begin{definition}
 Let $s\ge 2$ be an integer. A {\it $q$-ary $s$-frameproof code} of length $n$ is a subset $C\subset S^n$ such that for all $A\subset C$ with $|A|\le s$,
the intersection desc$(A)\cap C$ is the same as $A$.
\end{definition}
Note that $1$-frameproof codes are uninteresting, since any $C\subset S$ would satisfy the resulting condition.
From the definition of frameproof codes, it is clear that a $q$-ary $s$-frameproof code $C$ is a $q$-ary $s_1$-frameproof code for any $2\le s_1\le s$.

Following the notation from \cite{SW98}, we denote a $q$-ary $s$-frameproof code in $S^n$ of size $M$ by $s$-$FPC(n,M)$. As usual, we denote a $q$-ary error-correcting code of length $n$, size $M$ and minimum distance $d$ by $(n, M, d)$-code, or $[n,\log_qM,d]$-linear code if the code is linear.

We want to look at the asymptotic behavior of $s$-frameproof codes in the sense that $q$ and $s$ are fixed and the length $n$ tends to infinity.

\begin{definition}\label{def:frameproof}
 For  fixed integers $q\ge 2$, $s\ge 2$ and $n\ge 2$, let $M_q(n,s)$ denote the maximal size of $q$-ary $s$-frameproof codes of length $n$, i.e,
\[M_q(n,s):=\max\{M: \ \mbox{there exists a $q$-ary $s$-$FPC(n,M)$}\}.\]
For fixed $q$ and $s$, define the asymptotic quantity
\[R_q(s)=\limsup_{n\rightarrow\infty}\frac{\log_qM_q(n,s)}n.\]
\end{definition}

It seems that the exact values of $R_q(s)$ are not easy to be determined
for any given $q$ and $s$. Instead, we will get some lower bounds on $R_q(s)$.
Before looking at lower bounds, we first derive an upper bound on $R_q(s)$ from
\cite{B01}.

\begin{theorem}\label{thm:frameproofupper} \[R_q(s)\le \frac 1s.\]
\end{theorem}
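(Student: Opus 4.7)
The plan is to prove the sharper finite-length bound
\[M_q(n,s)\le s\cdot q^{\lceil n/s\rceil},\]
from which $R_q(s)\le 1/s$ follows at once by taking $\log_q$ and dividing by $n$. The argument is an elementary ``partition-and-representative'' pigeonhole (essentially the one in Blackburn's paper~\cite{B01}).

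First I would fix an arbitrary $q$-ary $s$-frameproof code $C\subset S^n$ and partition the coordinate set $\{1,\dots,n\}$ into $s$ disjoint blocks $B_1,\dots,B_s$, each of size at most $\lceil n/s\rceil$. For each $j\in\{1,\dots,s\}$, let $\pi_{B_j}\colon S^n\to S^{|B_j|}$ denote the projection onto the coordinates indexed by $B_j$, and set $V_j:=\pi_{B_j}(C)\subset S^{|B_j|}$, so that $|V_j|\le q^{|B_j|}\le q^{\lceil n/s\rceil}$. For each value $v\in V_j$ I would fix one representative codeword $\mathbf{c}_{j,v}\in C$ with $\pi_{B_j}(\mathbf{c}_{j,v})=v$.

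The key step is then to invoke the frameproof property to show that every codeword of $C$ is in fact one of these representatives. Given $\mathbf{c}\in C$, consider
\[A:=\{\mathbf{c}_{j,\pi_{B_j}(\mathbf{c})}:1\le j\le s\}\subset C,\qquad |A|\le s.\]
For each coordinate $i$, the unique block $B_j$ containing $i$ contributes a member of $A$ whose $i$-th entry equals $\pi_i(\mathbf{c})$ by construction, so $\mathbf{c}\in\mathrm{desc}(A)$. The $s$-frameproof hypothesis $\mathrm{desc}(A)\cap C=A$ then forces $\mathbf{c}\in A$. Hence $C$ coincides with its set of representatives and $|C|\le\sum_{j=1}^s|V_j|\le s\cdot q^{\lceil n/s\rceil}$, whence
\[R_q(s)\le\limsup_{n\to\infty}\frac{\lceil n/s\rceil+\log_q s}{n}=\frac{1}{s}.\]

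I do not expect a significant obstacle for this theorem: the only conceptual point is recognising that a block partition converts the descendant relation into a coordinate-wise matching game in which the $s$-frameproof property supplies exactly $s$ ``matchers,'' and once the representative assignment is set up the bound is pure counting. No tools beyond the definitions and an elementary pigeonhole are needed, in contrast to the constructive \emph{lower} bounds on $R_q(s)$ that occupy the rest of this section and which really do require algebraic geometry and the torsion limit.
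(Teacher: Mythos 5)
Your proposal is correct and takes essentially the same approach as the paper: both rest on the finite-length bound $M_q(n,s)\le s\,q^{\lceil n/s\rceil}$ and then pass to the limit. The only difference is that the paper obtains this bound by citing Theorem~1 of~\cite{B01}, whereas you supply the underlying partition-and-representative pigeonhole argument yourself (which is indeed Blackburn's proof, as you note).
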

  \begin {proof} By Theorem 1 of \cite{B01}, we have
\[ M_q(n,s)\le \max\{q^{\lceil\frac ns\rceil}, r\left(
q^{\lceil\frac ns\rceil}-1\right)+(s-r)\left(q^{\lfloor\frac ns\rfloor}-1\right)\},\]
where $r\in\{0,1,\dots,s-1\}$
and $r$ is the remainder of $n$ divided by $s$. Thus, we have
\[M_q(n,s)\le sq^{\lceil\frac ns\rceil}.\]
The desired result follows.
\end {proof}

From now on we will concentrate on lower bounds on $R_q(s)$. Let us first recall the constructions from \cite{CE00}.
\begin{proposition}\label{prop:frameprooffromcode} Let $q$ be a prime power. Then a
$q$-ary $[n,k,d]$-linear code $C$ is a $q$-ary $s$-$FPC(n,q^k)$ with $s=\lfloor (n-1)/(n-d)\rfloor$.
\end{proposition}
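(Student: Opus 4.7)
The plan is to use the linearity together with the minimum distance to force a pigeonhole style contradiction. Let $C$ be the $[n,k,d]$-linear code and set $s = \lfloor (n-1)/(n-d)\rfloor$. Take any $A \subset C$ with $|A| \le s$ and any codeword $\bc \in C \cap \d(A)$; I must show $\bc \in A$. Write $A = \{\ba_1, \ldots, \ba_{|A|}\}$ and, for each coordinate $i \in \{1, \ldots, n\}$, let $T_j = \{i : c_i = (\ba_j)_i\}$, so that the descendant condition says $\bigcup_j T_j = \{1, \ldots, n\}$.

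The key observation is that, because $C$ is linear, if $\bc \ne \ba_j$ then $\bc - \ba_j$ is a nonzero codeword and hence has Hamming weight at least $d$; equivalently, $|T_j| \le n-d$. Suppose for contradiction that $\bc \notin A$. Then $|T_j|\le n-d$ for every $j$, so a union bound gives
\[
n \;=\; \left|\bigcup_{j=1}^{|A|} T_j\right| \;\le\; \sum_{j=1}^{|A|} |T_j| \;\le\; s(n-d).
\]
This forces $s \ge n/(n-d)$, i.e., $s(n-d) \ge n > n-1$, which contradicts $s = \lfloor(n-1)/(n-d)\rfloor \le (n-1)/(n-d)$. Hence $\bc \in A$, showing that $\d(A)\cap C = A$ for every $A\subset C$ with $|A|\le s$, so $C$ is an $s$-$FPC(n, q^k)$.

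There is essentially no obstacle: the argument is purely combinatorial once linearity gives the distance-based upper bound on $|T_j|$. The only point requiring any care is the floor: one must verify strictness in $s(n-d) < n$, which is immediate from $s \le (n-1)/(n-d)$. Note also that the assumption $s \ge 2$ needed for the definition of frameproof code is automatic when $d \ge \lceil n/2\rceil + 1$, and otherwise the statement reduces to a weaker value of $s$, which is still a frameproof code by the monotonicity already observed after Definition~\ref{def:frameproof}.
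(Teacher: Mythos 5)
Your proof is correct, and it is the standard argument. The paper itself does not include a proof of this proposition; it is stated as a recollection of the construction from the cited reference [CE00], so there is nothing in the paper to compare against. Your argument is exactly the one that establishes it: for a descendant $\bc$ and each $\ba_j \in A$, linearity forces the agreement set $T_j$ to have size at most $n-d$ (since $\bc - \ba_j$ is either zero or has weight $\geq d$), and the descendant condition forces $\bigcup_j T_j = \{1,\dots,n\}$; the union bound then gives $n \le s(n-d)$, contradicting $s \le (n-1)/(n-d)$. The strictness issue you flag is handled correctly.

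One small remark on the aside about $s \ge 2$: the exact condition for $\lfloor (n-1)/(n-d)\rfloor \ge 2$ is $d \ge (n+1)/2$, i.e., $d \ge \lfloor n/2\rfloor + 1$; your stated $d \ge \lceil n/2\rceil + 1$ is sufficient but slightly stronger than needed for odd $n$. This does not affect the main argument, which works for any $s \ge 1$ and is only interesting when $s \ge 2$.
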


\begin{remark} This construction shows that the crucial parameter $s$
is determined only by the minimum distance of $C$ if the length is given.
\end{remark}

From the above relationship between linear codes and frameproof codes, we immediately obtain a lower bound on $R_q(s)$ from the Gilbert-Varshamov bound.

\begin{theorem}\label{thm:frameprooflower}
Let $q$ be a prime power and $2\le s< q$ an integer. Then
\[R_q(s)\ge 1-H_q\left(1-\frac 1s\right),\]
where
\[H_q(\delta)=\delta\log_q(q-1)-\delta\log_q\delta-\left(1-\delta)\log_q(1-\delta\right)\]
is the $q$-ary entropy function.
\end{theorem}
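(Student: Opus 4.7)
The plan is to combine Proposition~\ref{prop:frameprooffromcode} with the $q$-ary Gilbert-Varshamov bound. Proposition~\ref{prop:frameprooffromcode} tells us that a $q$-ary $[n,k,d]$-linear code is automatically a $q$-ary $s'$-$FPC(n,q^k)$ for $s' := \lfloor (n-1)/(n-d) \rfloor$; and since every $s'$-frameproof code is also an $s$-frameproof code for every $2 \le s \le s'$, it suffices to produce linear codes whose relative minimum distance forces $s' \ge s$. A quick calculation shows $\lfloor (n-1)/(n-d) \rfloor \ge s$ is equivalent to $d \ge n - (n-1)/s$, i.e., $d/n \ge 1 - 1/s + 1/(sn)$, a condition which tends to $d/n \ge 1-1/s$ as $n \to \infty$.

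First I would fix a small $\epsilon > 0$ and set $\delta_\epsilon := 1 - 1/s + \epsilon$. Because $s < q$, we have $1 - 1/s < 1 - 1/q$, so for $\epsilon$ small enough we still have $\delta_\epsilon < 1 - 1/q$, which is exactly the range where the $q$-ary Gilbert-Varshamov bound guarantees, for all sufficiently large $n$, the existence of a $q$-ary $[n,k_n,d_n]$-linear code with $d_n \ge \delta_\epsilon n$ and
\[
k_n \ge \lceil n(1 - H_q(\delta_\epsilon))\rceil.
\]
For $n$ large enough, the inequality $\delta_\epsilon n \ge n - (n-1)/s$ holds, so by Proposition~\ref{prop:frameprooffromcode} each such code is a $q$-ary $s$-frameproof code. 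Hence
\[
M_q(n,s) \ge q^{k_n} \ge q^{\,n(1 - H_q(\delta_\epsilon))},
\]
which yields $R_q(s) \ge 1 - H_q(\delta_\epsilon) = 1 - H_q(1-1/s+\epsilon)$.

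Finally, letting $\epsilon \to 0^+$ and using the continuity of the $q$-ary entropy function $H_q$ on $[0,1-1/q)$, we conclude $R_q(s) \ge 1 - H_q(1-1/s)$, which is the desired bound.

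There is no real obstacle here: the argument is a direct reduction to the Gilbert-Varshamov existence theorem for linear codes via the distance-to-frameproof translation in Proposition~\ref{prop:frameprooffromcode}. The only subtle points are (i) verifying that the regime $\delta > 1-1/s$ lies strictly below the Plotkin threshold $1-1/q$ (which is precisely the hypothesis $s < q$, and is also what makes $H_q(1-1/s)$ well-defined and strictly less than $1$), and (ii) ensuring the integer rounding in $s' = \lfloor (n-1)/(n-d) \rfloor$ does not spoil the asymptotic rate, which is harmless since $\delta_\epsilon$ is chosen strictly larger than $1 - 1/s$ and $n$ is taken large.
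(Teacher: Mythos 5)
Your proof is correct and takes essentially the same route as the paper, which simply states that the result ``follows directly from the Gilbert-Varshamov bound and Proposition~\ref{prop:frameprooffromcode}.'' You have filled in the omitted details accurately: the translation $\lfloor (n-1)/(n-d)\rfloor\ge s \Leftrightarrow d/n \ge 1-1/s+1/(sn)$, the observation that $s<q$ is exactly what keeps the target relative distance below the GV threshold $1-1/q$, and the $\epsilon$-shift plus continuity of $H_q$ to handle the rounding asymptotically.
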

\begin{proof}

The desired result follows directly from the Gilbert-Varshamov bound and
Proposition~\ref{prop:frameprooffromcode}.
\end{proof}

\begin{remark} The bound in Theorem~\ref{thm:frameprooflower} is only an existence result as the Gilbert-Varshamov bound is not constructive.
\end{remark}

\subsection{Lower Bounds from AG Codes}
In this section, we introduce two lower bounds on $R_q(s)$ from algebraic geometry codes. One bound can be obtained  by directly applying Proposition~\ref{prop:frameprooffromcode} and the Tsfasman-Vl\u{a}du\c{t}-Zink bound~\cite{TVZ82}.
However, the second bound employs our torsion limits.

\begin{theorem}\label{thm:fpcasbound}
For a prime power $q$ and  an integer $s\ge 2$, we have
\[R_q(s)\ge \frac 1s-\frac 1{A(q)}.\]
\end{theorem}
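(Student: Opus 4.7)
The plan is to combine Proposition~\ref{prop:frameprooffromcode} with the Tsfasman-Vl\u{a}du\c{t}-Zink construction of long algebraic geometry codes over function fields with many rational places. By Proposition~\ref{prop:frameprooffromcode}, to produce an $s$-frameproof code of length $n$ and large rate, it suffices to produce a $q$-ary linear $[n,k,d]$-code whose relative distance $\delta = d/n$ is close to (and at least) $1 - 1/s$; more precisely, the condition $\lfloor (n-1)/(n-d)\rfloor \ge s$ amounts to $d\ge n - (n-1)/s$, i.e., the ``gap'' $n-d$ must be at most $(n-1)/s$.

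First, I would pick an asymptotically optimal tower: let $\mF=\{F/\F_q\}$ be a family of function fields with $g(F)\to\infty$ and $N(F)/g(F)\to A(q)$, which exists by the definition of the Ihara constant. For each $F$ in the family with enough rational places, set $n := N(F)-1$, let $D=P_1+\cdots+P_n$ be the sum of $n$ distinct rational places, and let $Q$ be the remaining rational place (used merely to place the support of $G$ away from $D$). Choose $r:=\lfloor (n-1)/s\rfloor$, set $G=rQ$, and form the algebraic geometry code $C=C_L(D,G)$. Provided $2g(F)-2 < r < n$, which holds for all sufficiently large $F$ in the family since $r\sim n/s$ and $n/g(F)\to A(q)\ge s$ is not needed---we only need $r>2g-2$ eventually, which follows as long as $1/s > 0$ while $g/n \to 1/A(q)$ is finite---we get parameters
\[
k \ge r - g(F) + 1, \qquad d \ge n - r.
\]

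Then the bound $d\ge n-r\ge n-(n-1)/s$ ensures, via Proposition~\ref{prop:frameprooffromcode}, that $C$ is a $q$-ary $s$-frameproof code of length $n$ and size $q^k$. Its rate satisfies
\[
\frac{\log_q M_q(n,s)}{n}\;\ge\;\frac{k}{n}\;\ge\;\frac{r-g(F)+1}{n}\;\ge\;\frac{1}{s}-\frac{1}{s n}-\frac{g(F)-1}{n}.
\]
Since $n = N(F)-1$ and $N(F)/g(F)\to A(q)$, we have $g(F)/n \to 1/A(q)$ as $g(F)\to\infty$. Taking the limit superior in Definition~\ref{def:frameproof} yields
\[
R_q(s)\;\ge\;\frac{1}{s}-\frac{1}{A(q)},
\]
as claimed.

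There is essentially no hard obstacle here; the only delicate points are verifying $r > 2g-2$ eventually (so that Riemann-Roch gives the desired dimension and the standard AG distance bound applies) and making sure we have at least one extra rational place to host $G$, both of which are automatic once $g(F)$ is large and $A(q)>0$. The estimate is sharp in form: if $A(q)\le s$ the bound is trivial or negative, matching the fact that the construction only yields a non-trivial statement when the function field family is ``rich enough'' relative to $s$.
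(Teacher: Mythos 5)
Your proposal is correct in substance and takes essentially the same route as the paper (the paper's own proof is the one-liner ``combine Proposition~\ref{prop:frameprooffromcode} with the TVZ bound''; you are just unfolding what TVZ means via a one-point AG code on an optimal tower). One remark on a side condition you invoke: you claim the inequality $r>2g(F)-2$ holds ``as long as $1/s>0$ while $g/n\to 1/A(q)$ is finite,'' but that is not right --- since $r/n\to 1/s$ and $g/n\to 1/A(q)$, the ratio $r/(2g-2)$ tends to $A(q)/(2s)$, so $r>2g-2$ would require $A(q)>2s$, which is an extra hypothesis not present in the theorem. Fortunately, the condition is not needed: the bounds $k\geq \ell(G)\geq \deg G-g+1=r-g+1$ and $d\geq n-\deg G=n-r$ hold for any $G$ with $0\le \deg G<n$ (the first by Riemann's inequality, the second by the standard AG-code distance bound once $\mL(G-D)=\{0\}$, which follows from $\deg G<n$). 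You should simply drop the requirement $r>2g-2$; with it removed the argument is sound and matches the paper's.
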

\begin{proof} Let $\delta=1- 1/s$. Combining Proposition \ref{prop:frameprooffromcode}
with the TVZ bound, we obtain the desired result.
\end{proof}

\begin{remark} \begin{itemize} \item[(i)] The bound in Theorem \ref{thm:fpcasbound} is constructive  as long as sequences of curves attaining $A(q)$ are explicit.
\item[(ii)] It is easy to check that for every $s\ge 2$, the bound in Theorem \ref{thm:fpcasbound} is better than the one in Theorem \ref{thm:frameprooflower} for sufficiently large square $q$. For instance, for $s=2$, and a square $q\ge 49$,  the bound in Theorem \ref{thm:fpcasbound} is always better than the one in Theorem \ref{thm:frameprooflower}.
\item[(iii)] Comparing with the upper bound in Theorem \ref{thm:frameproofupper}, we find that
\[\frac 1s-\frac 1{A(q)}\le R_q(s)\le \frac 1s.\]
Since  $1/A(q)\rightarrow 0$ as
$q\rightarrow\infty$ (see \cite{NX01}),  $R_q(s)$ is getting closer to $1/s$ as  $q\rightarrow\infty$. The result $R_q(s)\approx 1/s$ is also implicitly
stated in \cite{CE00} by combining Propositions 2 and 3 there.
\end{itemize}

\end{remark}

The bound in  Theorem \ref{thm:fpcasbound} has been further improved in \cite{X02,Randriam10,Randriam13}.

\begin{theorem}\label{XR} \begin{itemize}
\item[(i)]\cite{X02} For every $2\le s\le A(q)$, one has
\[R_q(s)\ge \frac 1s-\frac 1{A(q)}+\frac{1-2\log_qs}{sA(q)}.\]
\item[(ii)]\cite{Randriam10} Let $s$ be the characteristic of $\F_q$, then one has
\[R_q(s)\ge \frac 1s-\frac 1{A(q)}+\frac{1-\log_qs}{sA(q)}.\]
\item[(iii)]\cite{Randriam13} For  $A(q)>5$, one has
\[R_q(2)\ge\frac12-\frac{1}{2A(q)}.\]
\end{itemize}
\end{theorem}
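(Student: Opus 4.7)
My plan is to derive parts (i) and (ii) of Theorem~\ref{XR} from a single algebraic-geometry construction that enhances Theorem~\ref{thm:fpcasbound} by enlarging the admissible degree of the divisor defining the code, using the Riemann-Roch system machinery together with the torsion limit. The key reduction is this: for an AG code $C = C(D,G)_L$ built on pairwise distinct rational places $P_1,\ldots,P_n$ with $D = \sum_i P_i$ and $\supp G \cap \supp D = \emptyset$, I claim $C$ is $s$-frameproof whenever $\ell(sG - D) = 0$. Indeed, if $f \in \mL(G)$ is in the descendant set of $\{f_1,\ldots,f_s\} \subset \mL(G)$, then for each $i$ some $f_{j(i)}$ agrees with $f$ at $P_i$, so the product $F := \prod_{j=1}^{s}(f - f_j) \in \mL(sG)$ vanishes on $\supp D$; hence $F \in \mL(sG - D) = \{0\}$, forcing $f = f_j$ for some $j$. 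Since the rate of $C$ is $R = \ell(G)/n \geq (\deg G - g + 1)/n$, the goal is to make $\deg G$ as large as possible subject to $\ell(sG - D) = 0$.

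The second step enlarges $\deg G$ beyond the trivial threshold $n/s$. Instead of demanding $\deg(sG - D) < 0$ (which gives Theorem~\ref{thm:fpcasbound}), I apply Theorem~\ref{thm:system} to the one-equation system $\{\ell(sX - D) = 0\}$. A solution $[G] \in \mathrm{Cl}_\delta(F)$ exists whenever $h > A_{s\delta - n}\cdot |\mJ_F[s]|$, and substituting the bound from Proposition~\ref{propo:Arh} reduces this to $q^{g-1-(s\delta-n)}(\sqrt{q}-1)^2 > g\cdot |\mJ_F[s]|$. Hence I may take $\delta = \lfloor (n + g - \log_q|\mJ_F[s]|)/s \rfloor - O(\log_q g)$, and by weak approximation arrange $\supp G \cap \supp D = \emptyset$. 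Working in a family $\mF = \{F_j/\Fq\}$ with $n_j/g_j \to A(q)$ and $\log_q|\mJ_{F_j}[s]|/g_j \to J_s(q, A(q))$, the resulting rate satisfies
\[
R_q(s) \;\geq\; \lim_j \frac{\delta_j - g_j + 1}{n_j} \;=\; \frac{1}{s} - \frac{1}{A(q)} + \frac{1 - J_s(q, A(q))}{s\, A(q)}.
\]
Inserting the torsion bounds from Theorem~\ref{2.2} now completes (i) and (ii): in the worst case one has $J_s(q,A(q)) \leq 2\log_q s$ (part (i) of Theorem~\ref{2.2}, when $s \mid q-1$; the case $s \nmid q-1$ is even better), which yields part (i) of Theorem~\ref{XR}; and when $s = \mathrm{char}(\Fq)$ we have $s \mid q$, hence $s \nmid q-1$, so part (ii) of Theorem~\ref{2.2} gives $J_s(q,A(q)) \leq \log_q s$, yielding part (ii).

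Part (iii) is strictly stronger than what the above argument gives: $R_q(2) \geq 1/2 - 1/(2A(q))$ is precisely the bound one would obtain by setting $J_2(q,A(q)) = 0$, so the factor $|\mJ_F[2]|$ in the Riemann-Roch count has to be eliminated entirely. This elimination is the main obstacle, and it needs a genuinely different idea. A natural attempt is to restrict the search to a single coset of $\mJ_F[2]$ acting on $\mathrm{Cl}_\delta(F)$, on which the map $[G]\mapsto[2G-D]$ becomes essentially injective, thereby removing the $|\mJ_F[2]|$ factor at the cost of trying only finitely many candidate cosets. The hypothesis $A(q) > 5$ should then enter as the threshold guaranteeing that enough good $[G]$ of asymptotically maximal degree still remain available after this restriction. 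Carrying this out rigorously would require a delicate analysis of the action of $\mJ_F[2]$ on effective divisor classes of the relevant degree --- or equivalently, a refined study of the fibers of an Abel-Jacobi-type map over the chosen tower --- and this is the step I expect to be the hard part.
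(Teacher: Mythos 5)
Your derivation of parts~(i) and~(ii) is correct and is essentially the paper's own route to those bounds: you are reconstructing the inequality $R_q(s)\ge \frac 1s-\frac 1{A(q)}+\frac{1-J_s(q,A(q))}{sA(q)}$, which is exactly Theorem~\ref{3.8} (obtained there via Theorem~\ref{3.2a}, Lemmas~\ref{lem:condframe}/\ref{lem:condframe2}, and Theorem~\ref{thm:system}), and then specializing via the torsion bounds as in Corollary~\ref{FPC}. One small technical slip: Theorem~\ref{2.2} is stated for \emph{prime} $r$, whereas part~(i) of Theorem~\ref{XR} allows arbitrary $2\le s\le A(q)$; for composite $s$ you need Theorem~\ref{2.3}(i), which gives $J_s(q,A(q))\le \log_q\bigl(\gcd(s,q-1)\cdot s\bigr)\le 2\log_q s$, but the conclusion is unchanged. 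Also note that the paper's Theorem~\ref{3.2a} avoids the $\supp G\cap\supp D=\emptyset$ hypothesis by using a modified evaluation map $f\mapsto(t_i^{v_i}f)(P_i)$; your version with disjoint supports (obtained via weak approximation) is equivalent in effect.

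Part~(iii) is a genuine gap in your proposal, and you acknowledge it yourself. The paper does not re-derive~(iii) from its machinery at all; it is cited directly from~\cite{Randriam13}, and indeed the paper's own Theorem~\ref{3.8} specialized to $s=2$ cannot reach $R_q(2)\ge\frac12-\frac1{2A(q)}$ unless $J_2(q,A(q))=0$, which is not established. The coset-restriction idea you sketch does not plausibly close this gap: the map $[G]\mapsto[2G-D]$ on $\Cl_\delta$ already has fibers that are exactly cosets of $\mJ_F[2]$, so restricting the domain to a single coset does not by itself remove the $|\mJ_F[2]|$ multiplicity in the counting argument --- one still has to control which of those image classes are represented by effective divisors, and that count is what you need to beat. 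The actual argument of~\cite{Randriam13} (for $(2,1)$-separating systems, valid when $A(q)>5$) is of a different nature and is not recoverable from the Riemann--Roch-system framework of Section~\ref{sec:rr}. So for~(iii), your proposal is an incomplete sketch rather than a proof.
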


For the rest of this section, we derive a lower bound on $R_q(s)$ by making use of the idea from \cite{X02} and our torsion limit. In particular,  the bounds (i) and (ii) of Theorem \ref{XR} can be deduced from our lower bound in Theorem \ref{3.8}. Furthermore, we improve the above bounds in the following two cases: (i) when $q$ is a square and $s$ is the characteristic of $\F_q$, the bound in Theorem \ref{XR}(ii) can be improved  significantly (see Corollary \ref{FPC}(i)); (ii) when $s$ does not divide $q-1$, the bound in Theorem \ref{XR}(i) can be improved (see Corollary \ref{FPC}(ii)).

Let $P_1,P_2,\dots,
P_n$ be $n$ distinct rational points of a function field $F$ over the finite field $\Fq$. Choose a positive divisor
$G$  such that $\mL(G-\sum_{i=1}^nP_i)=\{0\}$.
Let $\nu_{P_i}(G)=v_i\ge 0$ and $t_i$ be a local parameter at $P_i$ for each $i$.

Consider the map
$$\phi \ : \ \mL(G)\longrightarrow \F_q^n$$ $$
f\mapsto ((t_1^{v_1}f)(P_1),(t_2^{v_2}f)(P_2),\dots,(t_n^{v_n}f)(P_n)).$$
Then the image of $\phi$ forms a subspace of $\F_q^n$ that
is defined as an algebraic geometry code.
The image of $\phi$ is denoted by $C(\sum_{i=1}^nP_i,G)_L$.
The map $\phi$ is an embedding since $\mL(G-\sum_{i=1}^nP_i)=\{0\}$
and the dimension  of $C(\sum_{i=1}^nP_i,G)_L$ is equal to  $\ell(G)$.

\begin{remark} Notice that the above construction is a modified version
of algebraic geometry codes defined by Goppa. The advantage of the above construction is to make it possible to get rid of the condition
${\rm Supp}(G)\cap\{P_1,P_2,\dots,P_n\}=\emptyset$. This is crucial for our construction of frameproof codes in this section.

When the condition ${\rm Supp}(G)\cap\{P_1,P_2,\dots,P_n\}=\emptyset$ is satisfied, i.e., $v_i=0$ for all $i=1,\cdots,n$, then the above construction of algebraic geometry codes is consistent with Goppa's construction.
\end{remark}

\begin{theorem}\label{3.2a}
Let $F/\F_q$ be
an algebraic function field of genus $g$ and let $P_1,P_2,\dots,
P_n$ be $n$ distinct rational points of $F$. Let $G$ be a positive divisor
 such that $\deg(G)<n$. Let $s\ge 2$ satisfy $\mL(sG-\sum_{i=1}^nP_i)=\{0\}$. Then $C(\sum_{i=1}^nP_i,G)_L$ is an $s$-$FPC(n,q^{\ell(G)})$.
\end{theorem}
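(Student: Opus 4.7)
The plan is to reduce the $s$-frameproof property to the vanishing condition $\mL(sG-\sum P_i)=\{0\}$ via a product-of-differences trick, by carefully tracking valuations at the $P_i$ in the presence of the local twist $t_i^{v_i}$ used in the definition of $\phi$.

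First I would verify that $|C(\sum_{i=1}^nP_i,G)_L|=q^{\ell(G)}$, i.e., that $\phi$ is injective. If $\phi(f)=0$ then $(t_i^{v_i}f)(P_i)=0$ for every $i$, which means $\nu_{P_i}(f)\geq 1-v_i$, equivalently $\nu_{P_i}(f)+\nu_{P_i}(G)\geq 1$ for each $i$. Combined with $f\in\mL(G)$, this says $f\in\mL(G-\sum_{i=1}^nP_i)$, and since $\deg(G)<n$ the latter space is zero. This justifies the size claim.

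Next, let $A\subset C$ with $|A|\leq s$; write its elements as $\phi(f_1),\ldots,\phi(f_s)$, padding by repetition if $|A|<s$. Suppose $\bx=\phi(f)\in C$ lies in $\mathrm{desc}(A)$. By definition of descendant, for every $i\in\{1,\ldots,n\}$ there exists an index $j(i)\in\{1,\ldots,s\}$ with $(t_i^{v_i}f)(P_i)=(t_i^{v_i}f_{j(i)})(P_i)$, i.e., $t_i^{v_i}(f-f_{j(i)})$ vanishes at $P_i$. This translates to the local inequality
\[
\nu_{P_i}(f-f_{j(i)})\ \geq\ 1-v_i.
\]
For all other $j$ I only know the generic bound $\nu_{P_i}(f-f_j)\geq -v_i$ coming from $f-f_j\in\mL(G)$.

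Now form $h:=\prod_{j=1}^s(f-f_j)$. Since each factor lies in $\mL(G)$, we have $h\in\mL(sG)$. The key computation is at each $P_i$: summing the local bounds above gives
\[
\nu_{P_i}(h)\ =\ \sum_{j=1}^s\nu_{P_i}(f-f_j)\ \geq\ (1-v_i)+(s-1)(-v_i)\ =\ 1-sv_i,
\]
so $\nu_{P_i}(h)+s\,\nu_{P_i}(G)\geq 1$. Combined with the trivial bound $\nu_P(h)\geq -s\,\nu_P(G)$ at every place $P\notin\{P_1,\ldots,P_n\}$, this shows $h\in\mL(sG-\sum_{i=1}^nP_i)$. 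By hypothesis this space is trivial, so $h=0$, and hence $f=f_j$ for some $j$, i.e.\ $\phi(f)\in A$. Thus $\mathrm{desc}(A)\cap C=A$, proving that $C$ is an $s$-$FPC(n,q^{\ell(G)})$.

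The only subtle point is the local accounting around the twist: the modified evaluation $(t_i^{v_i}f)(P_i)$ records the leading coefficient of $f$ in the expansion at $P_i$, so the natural ``descendant'' condition translates cleanly into the one-unit gain in valuation at $P_i$ needed to push $h$ into $\mL(sG-\sum P_i)$. Once this translation is set up correctly, everything else is a direct valuation computation.
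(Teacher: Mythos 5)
Your proof is correct and follows essentially the same route as the paper's: you form the product $\prod_j (f - f_j)$, track valuations at the $P_i$ taking into account the local twist by $t_i^{v_i}$, and conclude the product lies in $\mL(sG-\sum P_i)=\{0\}$. The only cosmetic difference is that you pad to exactly $s$ factors while the paper works with $r\le s$ factors and uses $\mL(rG-\sum P_i)\subset\mL(sG-\sum P_i)$; you also spell out the injectivity of $\phi$ (giving $|C|=q^{\ell(G)}$), which the paper establishes in the remarks preceding the theorem.
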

\begin{proof}
For all $f\in\L(G)$, denote by $\cc_f$ the codeword
\[\phi(f)=((t_1^{v_1}f)(P_1),(t_2^{v_2}f)(P_2),\dots,(t_n^{v_n}f)(P_n)).\]
Let $A=\{\cc_{f_1},\dots,\cc_{f_r}\}$ be a subset of $C:=C(\sum_{i=1}^nP_i,G)_L$ with $|A|=r\le s$.
Let $\cc_g\in \d(A)\cap C$ for some $g\in \L(G)$. Then by the definition of descendant,
for each $1\le i\le n$ we have
\[\prod_{j=1}^r\pi_i(\cc_{f_j}-\cc_g)=0,\]
where $\pi_i(\cc_{f_j}-\cc_g)$ stands for  $i$th coordinate of $\cc_{f_j}-\cc_g$. This implies that
\[\prod_{j=1}^r(t_i^{v_i}f_j-t_i^{v_i}g)(P_i)=0,\]
i.e.,
\[\nu_{P_i}(\prod_{j=1}^r(t_i^{v_i}f_j-t_i^{v_i}g))\ge 1.\]
This is equivalent to
\[\nu_{P_i}(\prod_{j=1}^r(f_j-g))\ge -rv_i+1.\]
Hence,
\[\prod_{j=1}^r(f_j-g)\in\L(rG-\sum_{i=1}^nP_i)\subset\L(sG-\sum_{i=1}^nP_i)=\{0\}.\]
Thus, the function  $\prod_{j=1}^r(f_j-g)$ is the zero function. So, $f_l-g=0$ for some $1\le l\le r$. Hence
$\cc_g=\cc_{f_{l}}\in A$.
\end{proof}

From Theorem \ref{3.2a}, we know that it is crucial to find a
divisor $G$ such that $\L(sG-\sum_{i=1}^nP_i)=\{0\}$.
Again we can apply our Theorem~\ref{thm:system} to show

\begin{lemma}\label{lem:condframe}
Let $F/\F_q$ be an algebraic function field of genus $g$ with at least one rational point $P_0$.

Let $s, m,n$ be three integers satisfying  $s\ge 2$ and $g\le m\le
n<sm$ and $H$ a fixed positive divisor of degree $n$. Then there
exists a positive divisor $G$ of degree $m$  such that
$\L(sG-H)=\{0\}$
provided that
 $A_{sm-n}|\mJ[s]|<h$.
\end{lemma}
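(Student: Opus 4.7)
The plan is to derive this lemma as a direct application of Theorem~\ref{thm:system} to a one-equation Riemann-Roch system, followed by a standard Riemann-Roch argument to replace a divisor class representative by an effective one.

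First, I would set up the Riemann-Roch system $\{\ell(sX - H) = 0\}$, so in the notation of Theorem~\ref{thm:system} we have $u=1$, $m_1 = s$, $Y_1 = [-H]$, and $d_1 = -n$. Taking the target degree to be $m$ (playing the role of the integer variable called $s$ in the statement of Theorem~\ref{thm:system}), we compute $r_1 = s\cdot m + (-n) = sm - n$, which is positive since $n < sm$. The sufficient condition of Theorem~\ref{thm:system} then reads
\[
h > A_{sm-n}\cdot |\mathcal{J}_F[s]|,
\]
which is exactly our hypothesis. Hence there exists a divisor class $[G] \in \mathrm{Cl}_m(F)$ such that $\ell(sG - H) = 0$.

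Next, I would upgrade the class to an effective representative. Since $\deg G = m \geq g$, the Riemann-Roch theorem gives
\[
\ell(G) \geq \deg(G) + 1 - g = m - g + 1 \geq 1.
\]
Thus there exists a nonzero $f \in \mathcal{L}(G)$, and the divisor $G' := G + \mathrm{div}(f)$ is effective of degree $m$ and linearly equivalent to $G$. Since the space $\mathcal{L}(sX - H)$ depends only on the linear equivalence class of $X$, the condition $\ell(sG' - H) = 0$ is inherited from $G$. Replacing $G$ by $G'$ concludes the proof.

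There is essentially no serious obstacle here: the whole content is encoded in Theorem~\ref{thm:system}, and the only additional ingredient is the elementary observation that a class of degree at least $g$ contains an effective divisor (which is why the hypothesis $m \geq g$ is imposed, alongside $n < sm$, which ensures that the Riemann-Roch system is nontrivial but still potentially solvable by the bound on torsion and effective-divisor counts).
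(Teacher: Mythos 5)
Your proposal is correct and matches the route the paper clearly intends: the paper gives no explicit proof but introduces the lemma with ``we can apply our Theorem~\ref{thm:system} to show,'' and your instantiation ($u=1$, $m_1=s$, $Y_1=[-H]$, $d_1=-n$, target degree $m$, so $r_1=sm-n>0$) reproduces the hypothesis exactly. The final upgrade from a divisor class in $\mathrm{Cl}_m(F)$ to a positive representative via $\ell(G)\geq m-g+1\geq 1$ is the standard and necessary observation, and correctly explains the role of the hypothesis $m\geq g$.
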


\begin{lemma}\label{lem:condframe2}

Let $F/\F_q$ be an algebraic function field of genus $g$ with at least one rational point.

Let $s, m,n$ be three integers satisfying  $s\ge 2$ and $g\le m\le
n<sm$ and  $sm-n< g-\log_q|\mJ[s]|-\log_q\frac{qg}{(\sqrt{q}-1)^2}$. Let $D$ be a
fixed positive divisor of degree $n$. Then there exists a positive
divisor $G$ of degree $m$ such that
$\L(sG-D)=\{0\}$.
\end{lemma}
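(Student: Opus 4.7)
The plan is to derive this lemma as an immediate consequence of Lemma \ref{lem:condframe} combined with the counting bound in Proposition \ref{propo:Arh}. The displayed logarithmic hypothesis on $sm - n$ is designed to be precisely the rearrangement of the counting inequality $A_{sm-n}|\mJ[s]| < h$ obtained after plugging in the estimate from Proposition \ref{propo:Arh}.

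First, I would set $r := sm - n$ and verify that $0 \leq r \leq g - 1$, so that Proposition \ref{propo:Arh} is applicable. The lower bound $r \geq 0$ is immediate from $n < sm$. For the upper bound: both $|\mJ[s]| \geq 1$ and $qg/(\sqrt{q}-1)^2 > 1$ hold, so the right-hand side of the displayed hypothesis is strictly less than $g$, and since $sm - n$ is an integer, $sm - n \leq g - 1$.

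Next, applying Proposition \ref{propo:Arh} with this $r$ yields
$$A_{sm-n} \leq \frac{gh}{q^{g-(sm-n)-1}(\sqrt{q}-1)^2}.$$
Multiplying both sides by $|\mJ[s]|$, the desired strict inequality $A_{sm-n}|\mJ[s]| < h$ is implied by
$$g \cdot |\mJ[s]| < q^{g-(sm-n)-1}(\sqrt{q}-1)^2,$$
which after taking $\log_q$ of both sides and rearranging terms is exactly the hypothesis $sm - n < g - \log_q|\mJ[s]| - \log_q\bigl(qg/(\sqrt{q}-1)^2\bigr)$ of the lemma.

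Finally, applying Lemma \ref{lem:condframe} with $H := D$ produces a positive divisor $G$ of degree $m$ satisfying $\L(sG - D) = \{0\}$, as required. There is no real obstacle here; the whole argument is a short logarithmic manipulation together with the brief range check needed to invoke Proposition \ref{propo:Arh}.
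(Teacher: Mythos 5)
Your proof is correct and follows exactly the paper's own (terser) argument: invoke Proposition \ref{propo:Arh} with $r = sm-n$ after checking the range $1 \le sm-n \le g-1$, observe that the logarithmic hypothesis is precisely the rearrangement of $A_{sm-n}|\mJ[s]| < h$ after substituting the bound, and then apply Lemma \ref{lem:condframe} with $H = D$. The extra details you supply (the explicit range check and the logarithmic manipulation) are exactly what the paper leaves to the reader.
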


\begin{proof}
 By Proposition~\ref{propo:Arh} we have (note $1\leq sm-n\leq g-1$)
\[\frac{A_{sm-n}}{h}\le\frac{g}{q^{g-(sm-n)-1}(\sqrt{q}-1)^2}.\]
The condition in Lemma~\ref{lem:condframe} is satisfied and the desired result
follows.
\end{proof}

\begin{theorem}\label{3.8}
Suppose that $q$ is a prime power and $s$ is an integer such that
$A(q)\ge s\ge 2$ and $J_s(q,A(q))<1$. Then we have
\[R_q(s)\ge \frac 1s-\frac 1{A(q)}+\frac{1-J_s(q,A(q))}{sA(q)}.\]
\end{theorem}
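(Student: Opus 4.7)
The plan is to adapt the algebraic geometry construction of Theorem~\ref{3.2a} to an asymptotically good family of function fields, while using Lemma~\ref{lem:condframe2} to control the torsion-sensitive existence of the required divisor $G$. By the definition of $J_s(q,A(q))$, for any $\varepsilon>0$ one may fix a family $\mF$ of function fields over $\F_q$ with $A(\mF)=A(q)$ and $J_s(\mF)\le J_s(q,A(q))+\varepsilon$; after passing to a subfamily, assume in addition that $N(F)/g(F)\to A(q)$ and $\log_q|\J_F[s]|/g(F)\to J_s(\mF)$ as $g(F)\to\infty$. For each such $F$, write $g=g(F)$, $n=N(F)$, $P_1,\dots,P_n$ for the rational places, and $D=\sum_{i=1}^n P_i$.

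Next I would choose the integer
\[
m \;:=\; \left\lfloor \tfrac{1}{s}\Bigl(n+g-\log_q|\J_F[s]|-\log_q\tfrac{qg}{(\sqrt{q}-1)^2}\Bigr)\right\rfloor-1
\]
and verify (for large $g$ and small $\varepsilon$) the chain $g\le m\le n<sm$, together with the strict inequality $sm-n<g-\log_q|\J_F[s]|-\log_q(qg/(\sqrt{q}-1)^2)$ demanded by Lemma~\ref{lem:condframe2}. That lemma then produces a positive divisor $G$ of degree $m$ with $\mL(sG-D)=\{0\}$, and since $\deg G=m<n$, Theorem~\ref{3.2a} yields an $s$-$FPC(n,q^{\ell(G)})$.

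The rate now follows from Riemann--Roch: $\ell(G)\ge m-g+1$, hence
\[
\frac{\ell(G)}{n}\;\ge\;\frac{m-g+1}{n}\;=\;\frac{1}{s}+\frac{g}{sn}\Bigl(1-\tfrac{\log_q|\J_F[s]|}{g}\Bigr)-\frac{g}{n}+o(1),
\]
which, upon taking the limit along the subfamily and then letting $\varepsilon\to 0$, delivers $R_q(s)\ge \tfrac{1}{s}-\tfrac{1}{A(q)}+\tfrac{1-J_s(q,A(q))}{sA(q)}$. The main obstacle is verifying that the single integer $m$ above simultaneously satisfies all of the inequalities of Lemma~\ref{lem:condframe2}; this is precisely where the two hypotheses enter, since $A(q)\ge s$ forces $n\ge sg+o(g)$, and hence $n/s\ge g$ asymptotically, making $m\ge g$ compatible with $sm>n$, while $J_s(q,A(q))<1$ keeps the permissible window for $sm-n$ nonempty.
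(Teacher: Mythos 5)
Your proof is correct and takes essentially the same approach as the paper: pick a family realizing $A(q)$ and $J_s(q,A(q))$, choose $m$ so that Lemma~\ref{lem:condframe2} applies, invoke Theorem~\ref{3.2a}, and bound the dimension by $\ell(G)\geq m-g+1$. The paper's choice $m=\lfloor (n+(1-J_s(q,A(q))-\varepsilon)g)/s\rfloor$ differs only cosmetically from your threshold-based choice and yields the same limit after letting $\varepsilon\to 0$.
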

\begin{proof}
 Choose a family of function fields $F/\F_q$  with growing genus such that
 $\lim_{g(F)\rightarrow\infty}
N(F)/g(F)=A(q)$ and $\lim_{g(F)\rightarrow\infty}
\log_q|\mJ[s]|/g(F)=J_s(q,A(q))$. Put $n= N(F)$, $g=g(F)$. Let
$D=\sum_{P\in\PP^{(1)}(F)}P.$

 Now for any fixed
$0<\varepsilon<1-J_s(q,A(q))$, put
\[m=\lfloor\frac {n+(1-J_s(q,A(q))-\varepsilon )g}{s}\rfloor.\]
Then we obtain $$
\lim_{g\rightarrow\infty}\frac{m}{g}=\frac{A(q)+1-J_s(q,A(q))-\varepsilon}s>
\frac{A(q)}{s}\ge 1, $$ and 
$$
\lim_{n\rightarrow\infty}\frac{m}{n}=\frac{A(q)+1-J_s(q,A(q))-\varepsilon}{sA(q)}<
\frac{A(q)+1}{sA(q)}<\frac {2A(q)}{sA(q)}\le 1, 
$$
 and $$
\lim_{n\rightarrow\infty}\frac{sm}{n}=1+\frac{1-J_s(q,A(q))-\varepsilon}{A(q)}>1,
$$ and
$$\lim_{n\rightarrow\infty}\frac{sm-n-(1-J_s(q,A(q)))g}{g}=-\varepsilon<0.$$
Therefore, for all sufficiently large $g$ we have $g\le m<n<sm$ by
(2), (3) and (4). It follows from (5) that for all sufficiently
large $g$ we have
\[
sm-n< g-\log_q|\mJ[s]|-\log_q\frac{qg}{(\sqrt{q}-1)^2}.\] By Lemma
\ref{lem:condframe2}, there exists a divisor $G$ of degree $m$ of $F$ such
that $\L(sG-D)=\{0\}$ for each sufficiently large $g$. Thus, by
Theorem \ref{3.2a} the code $C(D,G)_L$ is an $s$-$FPC(n,q^{\ell(G)})$.
Hence,
\begin{eqnarray*}
R_q(s)&\ge&\lim_{g\rightarrow\infty}\frac{\log_qq^{\ell(G)}}{n}\\
&\ge&\lim_{g\rightarrow\infty}\frac{{m-g+1}}{n}\\
&=&\frac 1s-\frac 1{A(q)}+\frac{1-J_s(q,A(q))}{sA(q)}-\frac{\varepsilon}{sA(q)}.\\
\end{eqnarray*}
Since the above inequality holds for any $0<\varepsilon<1-J_s(q,A(q))$,
we get
\[R_q(s)\ge \frac 1s-\frac 1{A(q)}+\frac{1-J_s(q,A(q))}{sA(q)}\]
by letting $\varepsilon$ tend to $0$. This completes the proof.
\end{proof}

\begin{corollary}\label{FPC} Suppose that $q$ is a prime power and $s$ is an integer such that
$A(q)\ge s\ge 2$. Then we have
\begin{equation}\label{eq:L1}R_q(s)\ge \frac 1s-\frac 1{A(q)}+\frac{1-2\log_qs}{sA(q)}.\end{equation}
Moreover, we obtain an improvement to the bounds in Theorem \ref{XR}  for the following two cases.
\begin{itemize}
\item[{\rm (i)}] If $q$ is a square and $s$ is the characteristic of
$\F_q$ with  $\sqrt{q}-1\ge s\ge 2$, then
\begin{equation}\label{eq:L2}R_q(s)\ge \frac 1s-\frac 1{\sqrt{q}-1}+\frac{(1-(\log_qs)/(\sqrt{q}+1))}{s(\sqrt{q}-1)}.\end{equation}
\item[{\rm (ii)}] If $s$ does not divide $q-1$, then
\begin{equation}\label{eq:L3}R_q(s)\ge \frac 1s-\frac 1{A(q)}+\frac{1-\log_qs}{sA(q)}.\end{equation}
\end{itemize}
\end{corollary}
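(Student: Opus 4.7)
The plan is to derive all three inequalities as direct consequences of Theorem \ref{3.8} by substituting suitable upper bounds on the torsion limit $J_s(q, A(q))$ taken from Theorems \ref{2.2} and \ref{2.3}. In each case, the verification that $J_s(q, A(q)) < 1$ (required to invoke Theorem \ref{3.8}) will be immediate from the standing hypothesis $s \le A(q) \le \sqrt{q}-1$.

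For the general bound \eqref{eq:L1}, I would apply Theorem \ref{2.3}(i) with $m = s$. Setting $d = \gcd(s,q-1)$, this gives
$$J_s(q, A(q)) \le \log_q(ds) \le \log_q(s^2) = 2\log_q s,$$
since $d \le s$. Because $s \le A(q) \le \sqrt{q}-1$ implies $s^2 < q$, we have $2\log_q s < 1$, so the hypothesis of Theorem \ref{3.8} is met; substituting this bound on $J_s$ into the inequality of that theorem and using the fact that $\frac{1 - 2\log_q s}{sA(q)} \le \frac{1 - J_s(q, A(q))}{sA(q)}$ yields \eqref{eq:L1}.

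For the improved bound \eqref{eq:L2}, note that $q$ being a square forces $A(q) = \sqrt{q}-1$ by \eqref{eq2.1}. Taking $s = p$ equal to the characteristic, I would write $s$ in the form $p^\ell m'$ of Theorem \ref{2.3}(ii) with $\ell = 1$, $m' = 1$, and $c = \gcd(m', q-1) = 1$, obtaining
$$J_s(q, \sqrt{q}-1) \le \frac{\log_q s}{\sqrt{q}+1}.$$
Substituting this into Theorem \ref{3.8} (the condition $J_s < 1$ clearly holds) gives \eqref{eq:L2}. For \eqref{eq:L3}, with $s$ a prime not dividing $q-1$ we have $\gcd(s, q-1) = 1$, and Theorem \ref{2.3}(i) (equivalently Theorem \ref{2.2}(ii)) gives $J_s(q, A(q)) \le \log_q s$; inserting this into Theorem \ref{3.8} produces \eqref{eq:L3}.

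Since all three inequalities reduce to plugging known torsion-limit bounds into Theorem \ref{3.8}, there is no serious obstacle. The only points requiring care are: (a) verifying $J_s(q, A(q)) < 1$ in each case so Theorem \ref{3.8} is applicable, which follows cleanly from $s \le A(q) \le \sqrt{q}-1$; and (b) bookkeeping in case (ii) to identify the right specialization of Theorem \ref{2.3}(ii) (namely $\ell = 1$, $m' = 1$) and using $A(q) = \sqrt{q}-1$. The genuine work has already been carried out in establishing Theorems \ref{2.2}, \ref{2.3}, and \ref{3.8}; this corollary just collects their numerical consequences.
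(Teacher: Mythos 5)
Your proposal is correct and follows essentially the same route as the paper: it is exactly a matter of substituting the torsion-limit bounds from Section~\ref{sec:tl} into Theorem~\ref{3.8}, after checking $J_s(q,A(q))<1$ (which, as you observe, is automatic from $s\le A(q)\le\sqrt q-1$). The one small refinement worth noting is that for the general bound~\eqref{eq:L1} you invoke Theorem~\ref{2.3}(i) with $m=s$ and $d=\gcd(s,q-1)\le s$, which covers arbitrary integers $s\ge 2$; the paper's one-line proof cites Theorem~\ref{2.2}(i), which as stated applies only to prime $s$ dividing $q-1$, so your choice is actually the cleaner one given the corollary's hypothesis. For~\eqref{eq:L3} you restrict to $s$ prime (so that $s\nmid q-1$ forces $\gcd(s,q-1)=1$), which is the same restriction implicit in the paper's appeal to Theorem~\ref{2.2}(ii); a composite $s$ not dividing $q-1$ but sharing a factor with it would only yield $J_s\le\log_q(ds)$ with $d>1$, and neither your argument nor the paper's delivers~\eqref{eq:L3} in that case --- but that is a loose end in the corollary's statement rather than a defect in your proof.
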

\begin{proof} The  bounds (\ref{eq:L1}), (\ref{eq:L2}) and (\ref{eq:L3}) follow from Theorems \ref{3.8} and Theorem \ref{2.2}(i),  \ref{2.2}(iii) and \ref{2.2}(ii), respectively.
\end{proof}

\section{Acknowledgments}

We are grateful for valuable contributions to the refinements on the bounds for the torsion limit in Theorem~\ref{2.2}.
Bas Edixhoven and Hendrik Lenstra suggested  the generic approach we used in its second part. Alp Bassa and Peter Beelen
confirmed our hope that stronger bounds should be attainable from certain {\em specific} recursive towers, by contributing the proof of its third part. We also thank Hendrik for many
helpful discussions, and for his encouragement since the paper was first circulated in the Fall of 2009.
We are thankful to Florian Hess for valuable discussions. Finally, we thank the referees for their helpful comments.

\end{document}